\newtheorem{theorem}{Theorem}
\newtheorem{definition}{Definition}
\newtheorem{lemma}{Lemma}
\newtheorem{proposition}{Proposition}
\newtheorem{remark}{Remark}
\newcommand{\new}[1]{#1}
\newcommand{\old}[1]{}
\numberwithin{equation}{section}
\begin{document}

\title[Lie--Poisson discretization for incompressible MHD]{Spatio-temporal Lie--Poisson discretization for incompressible magnetohydrodynamics on the sphere}

\author{Klas Modin}
\address{Klas Modin: Department of Mathematical Sciences, Chalmers University of Technology and University of Gothenburg, 412 96 Gothenburg, Sweden}
\email{klas.modin@chalmers.se}

\author{Michael Roop}
\address{Michael Roop: Department of Mathematical Sciences, Chalmers University of Technology and University of Gothenburg, 412 96 Gothenburg, Sweden}
\email{michael.roop@chalmers.se}

\subjclass[2020]{37M15; 65P10; 53D20; 76W05}

\keywords{magnetohydrodynamics, Lie--Poisson structure, magnetic extension, Casimirs, Hamiltonian dynamics, symplectic Runge-Kutta integrators}

\begin{abstract}
We give a structure preserving spatio-temporal discretization for incompressible magnetohydrodynamics (MHD) on the sphere. Discretization in space is based on the theory of geometric quantization, which yields a spatially discretized analogue of the MHD equations as a finite-dimensional Lie--Poisson system on the dual of the magnetic extension Lie algebra $\mathfrak{f}=\mathfrak{su}(N)\ltimes\mathfrak{su}(N)^{*}$. We also give accompanying structure preserving time discretizations for Lie--Poisson systems on the dual of semi-direct product Lie algebras of the form $\mathfrak{f}=\mathfrak{g}\ltimes\mathfrak{g^{*}}$, where $\mathfrak{g}$ is a $J$-quadratic Lie algebra. The time integration method is free of computationally costly matrix exponentials. We prove that the full method preserves \new{a modified Lie--Poisson structure and corresponding Casimir functions, and that the modified structure and} Casimirs converge to the continuous ones. The method is demonstrated for two models of magnetic fluids: incompressible magnetohydrodynamics and Hazeltine's model.
\end{abstract}

\maketitle
\tableofcontents
\vspace{-0.5cm}
{\bf Acknowledgement.} This work was supported by the Knut and Alice Wallenberg Foundation, grant number WAF2019.0201, and by the Swedish Research Council, grant number 2022-03453. The work of M.R. is supported by the Royal Swedish Academy of Sciences, grant numbers MG2023-0001, MA2024-0034, MG2024-0050. The authors would like to thank Darryl Holm and Philip Morrison for pointing us to Hazeltine's model for magnetic fluids.
\section{Introduction}
The equations of incompressible magnetohydrodynamics (MHD) describe the evolution of the velocity $v(t,x)$ of an ideal charged fluid and its magnetic field $B(t,x)$ on a two- or three-dimensional Riemannian manifold $M$:
\begin{equation}
\label{MHD}
\left\{
\begin{aligned}
&\dot v+\nabla_{v}v=-\nabla p+\mathrm{curl}B\times B, \\
&\dot B=L_{v}B,\\
&\operatorname{div}B=0,\\
&\operatorname{div}v=0.
\end{aligned}
\right.
\end{equation}
Here, $p(t,x)$ is a pressure function, $L_{v}$ denotes the Lie derivative along the vector field $v(t,x)$, and $\nabla_{v}v$ is the covariant derivative of the vector field $v$ along itself.

The MHD system \eqref{MHD} admits a Hamiltonian formulation in terms of a \emph{Lie--Poisson structure} \citep{Arn,ArnKh,VD,MoGr1980} on the dual of the semi-direct product Lie algebra
\begin{equation*}
\mathfrak{imh}=\mathfrak{X}_{\mu}(M)\ltimes\mathfrak{X}_{\mu}^{*}(M).
\end{equation*}
The Hamiltonian on $\mathfrak{imh}^*$ is given by 
\begin{equation*}
H(v,B)=\frac{1}{2}\int\limits_{M}\left(|v|^{2}+|B|^{2}\right)\mu .
\end{equation*}
Here, $\mathfrak{X}_{\mu}(M)$ is the Lie algebra of divergence-free vector fields, whereas $\mathfrak{X}_{\mu}^{*}(M)$ and $\mathfrak{imh}^*$ denote the (smooth) dual spaces.
Physically, the Hamiltonian represents the energy of the magnetic fluid.

Geometrically, system \eqref{MHD} is the Poisson reduction of a canonical Hamiltonian system on the cotangent bundle
\begin{equation*}
T^{*}\left(\mathrm{Diff}_{\mu}(M)\ltimes \mathfrak{X}_{\mu}^{*}(M)\right), 
\end{equation*}
where the subscript $\mu$ stands for the Riemannian volume form, and $\mathrm{Diff}_{\mu}(M)$ is the group of volume-preserving diffeomorphisms of $M$, i.e., diffeomorphisms of $M$ that leave the differential form $\mu$ invariant:
\begin{equation*}
\mathrm{Diff}_{\mu}(M)=\left\{\varphi\in\mathrm{Diff}(M)\mid\varphi^{*}\mu=\mu\right\}.
\end{equation*}

The Hamiltonian nature of the flow \eqref{MHD} implies a multitude of conservation laws. 
\new{In particular, recall that a function on $\mathfrak{imh}^*$ that Poisson commutes with every other function on $\mathfrak{imh}^*$ is called a Casimir function.}
In 3-D, they are \textit{magnetic helicity} and \textit{cross-helicity}; Khesin, Peralta-Salas, and Yang~\cite{KhPerYang2019} showed that these are the only independent Casimirs. 
In 2-D, there are an infinite number of Casimirs (detailed below). 
These conservation laws, and the underlying Lie--Poisson structure, significantly restrict which states are possible to reach from a given initial state.
They thereby influence the long time qualitative behaviour in phase space.
Indeed, to capture the qualitative behaviour in long time numerical simulations, one should use discretizations that preserve the rich geometric structure in phase space (for a detailed motivation of structure preserving schemes in the case of plasma physics, see the review paper by Morrison~\cite{Mo2017a}).

But infinite-dimensional Lie--Poisson structures, such as $\mathfrak{imh}^*$ for MHD, are strikingly rigorous; all traditional spatial discretizations, including all finite element methods based on discrete exterior calculus, fail to admit a finite-dimensional Lie--Poisson formulation.
In addition, Lie--Poisson preserving time discretizations (integrators) are hard to come by. 
Nevertheless, to find such structure preserving discretizations is critical for qualitatively reliable long time simulations, motivated by the intensified study of stellarators, where 2-D MHD provide a simple model for low beta tokamak dynamics \cite{KrTassGrass2016}.

The goal of this paper is to develop, for the sphere $\mathrm{S}^2$, a spatio-temporal discretization of the MHD system \eqref{MHD} that preserves \new{a modified, finite-dimensional} Lie--Poisson structure, including the corresponding Casimir conservation laws. 
To this end, we draw on two bodies of previous work.
First, that of Zeitlin~\cite{Ze1991,Zeit,Ze2005}, who used quantization theory to derive a Lie--Poisson preserving spatial discretization for the incompressible Euler equations on the flat torus and later extended it to MHD.
Second, that of Modin and Viviani~\cite{ModViv,ModViv1}, who for the spherical domain $\mathrm{S}^2$ developed a tailored Lie--Poisson preserving temporal discretization \new{used for numerical studies of the long time behaviour}, and together with Cifani~\cite{CiViMo2023} addressed computational efficiency.

Let us give a brief overview of the approach. For spatial discretization, the main tool is the theory of \textit{Berezin--Toeplitz quantization} \cite{Hopp,Hopp1,Bord1991,Bord1994,HoppYau}. 
The basic idea is to replace the infinite-dimensional Poisson algebra of smooth functions by the finite-dimensional Lie algebra $\mathfrak{su}(N)$ of skew-hermitian, trace free matrices. 
Together with a quantized Laplacian on $\mathfrak{su}(N)$, one then obtains a finite-dimensional approximation of Euler's equations as a matrix flow --- \emph{Zeitlin's model}. The present paper extends this approach to models describing the motion of incompressible magnetized fluids on $\mathrm{S}^2$. Indeed, the spatially discretized analogue of the MHD equations constitutes a Lie--Poisson flow on the dual of the Lie algebra $\mathfrak{f}=\mathfrak{su}(N)\ltimes\mathfrak{su}(N)^{*}$, usually referred to as the \textit{magnetic extension} of $\mathfrak{su}(N)$.
Next, for temporal discretization, it is natural to consider \emph{isospectral symplectic Runge-Kutta integrators} (IsoSRK) \cite{ModViv}.
These schemes yield Lie--Poisson integrators for any \emph{reductive and $J$-quadratic} Lie algebra $\mathfrak{g}$ (see details below), which include all the classical Lie algebras.
However, the magnetic extension $\mathfrak{g}\ltimes \mathfrak{g}^*$ is \emph{not} reductive and \emph{not} defined by a $J$-quadratic condition; we need an extension of IsoSRK.
Such an extension is developed in this paper.

Although MHD on $\mathrm{S}^2$ is our main concern, the semi-direct product approach covers a large variety of dynamical systems arising in mathematical physics \cite{HMR,ThMo2000}. 
Among them are:
\begin{itemize}
  \item the Kirchhoff equations \cite{ArnKh,KhMisMod,Kir,VD}, describing a rigid body moving in an ideal fluid, as a Lie--Poisson system on the dual of $\mathfrak{e}(3)=\mathfrak{so}(3)\ltimes\mathbb{R}^{3}$;
  \item the barotropic Euler equations describing the motion of a compressible fluid \cite{KhMisMod,MRW}, as a Lie--Poisson system on the dual of $\mathfrak{s}=\mathfrak{X}(M)\ltimes C^{\infty}(M)$;
  \item Hazeltine's equations describing magnetized plasma \cite{Holm,Haz,HazHolm}. 
\end{itemize}
2-D MHD together with these and other examples underline the need for structure preserving numerical methods for Lie--Poisson systems of semi-direct product Lie algebras. 

The paper is organized as follows.
In section~\ref{sec:mhd_vorticity} we give the vorticity formulation for the 2-D MHD equations.
This formulation is the basis for the spatial discretization given in section~\ref{sec:spatial_discretization}, together with convergence results for the discretized Casimirs.
Temporal Lie--Poisson discretizations are then derived and analysed in section~\ref{sec:temporal_discretization}. 
The framework is extended to Hazeltine's equations in section~\ref{sec:hazeltine}, and to the Kirchhoff equations in section~\ref{sec:kirchhoff_equations}.
Numerical examples are given in section~\ref{sec:numerical_simulations}. 

\section{Vorticity formulation for MHD equations}\label{sec:mhd_vorticity}
In this section, we work on two-dimensional Riemannian manifolds $(M,g)$ without boundary and with trivial first co-homology (i.e., no ``holes'').
First, since the vector fields $B(t,x)$ and $v(t,x)$ are divergence-free and the co-homology is trivial, one can introduce two smooth functions $\theta\in C^{\infty}(M)$ and $\psi\in C^{\infty}(M)$ corresponding to the Hamiltonians for the vector fields $v$ and $B$:
\begin{equation*}
v=X_{\psi},\quad B=X_{\theta}.
\end{equation*}
The function $\psi$ is called the \textit{stream function}.
Similarly, we refer to $\theta$ as the \textit{magnetic stream function}.

Next, we define the \textit{vorticity function} $\omega\in C^{\infty}(M)$ and the \emph{magnetic vorticity} $\beta\in C^\infty(M)$ by
\begin{equation*}
  \beta = \Delta\theta, \qquad \omega = \Delta\psi.
\end{equation*}
\begin{proposition}[\cite{VD,MoGr1980}]
The vorticity formulation of the 2-D MHD equations \eqref{MHD} is
\begin{equation}
\label{MHDvort}
\left\{
\begin{aligned}
&\dot\omega=\left\{\omega,\psi\right\}+\left\{\theta,\beta\right\}, \quad &\omega =\Delta\psi, \\
&\dot\theta=\left\{\theta,\psi\right\}, \quad &\beta=\Delta\theta, 
\end{aligned}
\right.
\end{equation}
where $\left\{\cdot,\cdot\right\}$ is the Poisson bracket on $M$.
\end{proposition}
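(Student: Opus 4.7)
The plan is to derive both equations of (\ref{MHDvort}) directly from (\ref{MHD}), using as the main tool the Lie algebra morphism $f \mapsto X_f$ from $C^\infty(M)/\mathbb{R}$ with the Poisson bracket $\{\cdot,\cdot\}$ induced by the area form $\mu$ to the Lie algebra $\mathfrak{X}_\mu(M)$ of divergence-free vector fields. Triviality of the first cohomology of $M$ guarantees that $v = X_\psi$ and $B = X_\theta$ are globally well-defined.

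For the transport equation of the magnetic stream function, I would substitute $v = X_\psi$, $B = X_\theta$ into $\dot B = L_v B$. Since $L_v B = [v, B]$ for vector fields, this becomes $\dot X_\theta = [X_\psi, X_\theta] = X_{\{\theta, \psi\}}$ in the sign convention adopted here. Injectivity of $f \mapsto X_f$ modulo constants then yields $\dot\theta = \{\theta, \psi\}$, the arbitrary time-dependent additive constant being absorbed into the gauge freedom of $\theta$. For the vorticity equation, I would apply the 2D scalar curl $\operatorname{curl}(\cdot) = \star\, d(\cdot)^\flat$ to the momentum equation. The pressure term drops out by $\operatorname{curl}(\nabla p) = 0$. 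The convective part yields, via the standard identity $\operatorname{curl}(\nabla_v v) = v \cdot \nabla \omega$ for divergence-free $v$ in 2D combined with $v \cdot \nabla \omega = -\{\omega, \psi\}$, a contribution $\dot\omega - \{\omega, \psi\}$. For the Lorentz term, I would use that $\operatorname{curl}(B) = \beta$ is a scalar in 2D; writing $\operatorname{curl}(B) \times B = \beta\, JB$ for the $\pi/2$-rotation $J$ induced by $\mu$ and the metric, and using $B = X_\theta$ so that $JB$ is proportional to $\nabla\theta$, a direct computation via the identity $\operatorname{curl}(f\, \nabla g) = \{g, f\}$ gives $\{\theta, \beta\}$. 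Assembling these pieces produces $\dot\omega = \{\omega, \psi\} + \{\theta, \beta\}$.

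The main obstacle is bookkeeping the signs and remaining coordinate-free on a curved 2D surface; this is cleanest when phrased entirely through differential forms (Hodge star, musical isomorphisms, and the area form $\mu$), at the price of slightly heavier notation. A conceptually more elegant alternative is to derive (\ref{MHDvort}) straight from the Lie--Poisson bracket on $\mathfrak{imh}^*$ with the Hamiltonian $H$: evaluating $\dot F = \{F, H\}_{\mathrm{LP}}$ on observables corresponding to $\omega$ and $\theta$ produces the vorticity formulation directly, bypassing both the pressure and the explicit curl computation, but it requires setting up the semidirect-product Poisson geometry in advance.
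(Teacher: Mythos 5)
The paper does not prove this proposition; it is quoted from the literature (\cite{VD,MoGr1980}) and stated without argument, so there is no internal proof to compare against. Your derivation is the standard one and is essentially correct: introducing stream functions via triviality of $H^1(M)$, reading the induction equation $\dot B=L_vB=[v,B]$ through the (anti)homomorphism $f\mapsto X_f$ to get $\dot\theta=\{\theta,\psi\}$, and taking the scalar curl $\star\,\mathrm{d}(\cdot)^\flat$ of the momentum equation to kill the pressure and produce the vorticity equation. The two identities you invoke do hold on a curved surface, and it is worth recording why: $\mathrm{d}(\nabla_vv)^\flat=L_v\mathrm{d}v^\flat-\tfrac12\mathrm{d}\,\mathrm{d}|v|^2=L_v(\omega\mu)=(v\cdot\nabla\omega)\mu$ for divergence-free $v$, so no curvature terms survive; and with the paper's convention $\{f,g\}\Omega=\mathrm{d}f\wedge\mathrm{d}g$ one has $\operatorname{curl}(f\nabla g)=\star(\mathrm{d}f\wedge\mathrm{d}g)=\{f,g\}$, which combined with $JB=J^2\nabla\theta=-\nabla\theta$ gives $\operatorname{curl}(\beta\,JB)=-\{\beta,\theta\}=\{\theta,\beta\}$, consistent with the stated equation. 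The only soft spot is the one you flag yourself: the signs in $[X_f,X_g]=\pm X_{\{f,g\}}$ and in the definition of $X_f$ must be fixed once and used consistently in both equations, since an inconsistent choice would flip the sign of one bracket but not the other. Your suggested alternative, computing $\dot F=\{F,H\}_{\mathrm{LP}}$ on $\mathfrak{imh}^*$, is closer in spirit to how the paper frames the system (as a Lie--Poisson flow) and would buy the Casimirs \eqref{enstr}--\eqref{crhel} for free, at the cost of first setting up the semidirect-product bracket that the paper only introduces later in its quantized form.
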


System \eqref{MHDvort} admits a Hamiltonian formulation in terms of a non-canonical Poisson bracket, see \cite{MoGr1980}. The corresponding Hamiltonian is
\begin{equation*}
H=\frac{1}{2}\int\limits_{M}\left(\omega\Delta^{-1}\omega+\theta\Delta\theta\right)\mu.
\end{equation*}
The Casimir invariants for system \eqref{MHDvort} are
\begin{equation}
\label{enstr}
\mathcal{C}_{f}=\int\limits_{M}f(\theta)\mu,\quad I_{g}=\int\limits_{M}\omega g(\theta)\mu,
\end{equation}
for any choice of smooth functions $f\colon\mathbb{R}\to\mathbb{R}$ and $g\colon\mathbb{R}\to\mathbb{R}$. The function $I_{g}$ is the two-dimensional analogue of the cross-helicity Casimir.
\begin{remark}
The Casimir $I_{g}$ in \eqref{enstr} is a more general invariant compared to the conventional definition of cross-helicity. 
Indeed, the Casimir $I_{g}$ corresponds to conventional cross-helicity for $g(x)=x$. 
We shall, however, refer to $I_{g}$ as \textit{cross-helicity} even for a general function $g$.
\end{remark}

\begin{remark}
  Due to Stokes' theorem, the vorticity functions $\omega$ and $\beta$ have zero mean
  \begin{equation*}
  \int\limits_{M}\omega\mu=\int\limits_{M}\beta\mu=0,
  \end{equation*}
  which reflects zero circulation.
  Since Hamiltonian functions are defined up to a constant, it is no restriction to assume that also
  \begin{equation*}
  \int\limits_{M}\psi\mu=\int\limits_{M}\theta\mu=0,
  \end{equation*}
  and therefore system \eqref{MHDvort} evolves on the space of pairs of zero-mean functions $C_{0}^{\infty}(M)$.
\end{remark}

\section{Spatial discretization of MHD equations}\label{sec:spatial_discretization}
In this section, we present a spatial discretization of the incompressible MHD equations on the sphere $\mathrm{S}^2$ based on the theory of quantization \cite{Hopp1,Bord1991,Bord1994}. 
In contrast to standard discretization schemes for systems of PDEs, such as finite element methods, we focus on conservation of the underlying geometric structure in phase space and the corresponding Casimirs \eqref{enstr}. 
Namely, we replace the infinite-dimensional Poisson algebra $(C_{0}^{\infty},\left\{\cdot,\cdot\right\})$ with a finite-dimensional analogue: skew-Hermitian matrices with zero trace $(\mathfrak{su}(N),[\cdot,\cdot])$. 
The sequence of Lie algebras $(\mathfrak{su}(N),[\cdot,\cdot])$ converges (in a weak sense) to the Lie algebra $(C_{0}^{\infty},\left\{\cdot,\cdot\right\})$ as $N\to\infty$, as we shall briefly review next. 
Thereafter, the spatially discretized analogue of \eqref{MHDvort} is a Lie--Poisson flow on the dual $\mathfrak{f}^{*}$ of the semi-direct product Lie algebra $\mathfrak{f}=\mathfrak{su}(N)\ltimes\mathfrak{su}(N)^{*}$.
\subsection{Quantization on the sphere}
We start with the definition of an $\mathfrak{L}_{N}$-quasilimit \cite{Bord1991,Bord1994}.

Let $(\mathfrak{L},[\cdot,\cdot])$ be a complex (real) Lie algebra and let $(\mathfrak{L}_{N},[\cdot,\cdot]_{N})$ be an indexed sequence of complex (real) Lie algebras with $N\in\mathbb{N}$, equipped with metrics $d_{N}$ and a family of linear maps $p_{N}\colon\mathfrak{L}\to\mathfrak{L}_{N}$.
\begin{definition}
The Lie algebra $(\mathfrak{L},[\cdot,\cdot])$ is said to be an $\mathfrak{L}_{N}$-\textit{quasilimit}, if
\begin{itemize}
\item all $p_{N}$ are surjective for $N\gg0$,
\item if for all $x,y\in\mathfrak{L}$ we have $d_{N}(p_{N}(x),p_{N}(y))\to0$, as $N\to\infty$, then $x=y$,
\item for all $x,y\in\mathfrak{L}$ we have $d_{N}(p_{N}([x,y]),[p_{N}(x),p_{N}(y)]_{N})\to0$, as $N\to\infty$.
\end{itemize}
\end{definition}
Now we explicitly specify $M$ to be the two-dimensional sphere $\mathrm{S}^{2}$, which is a symplectic manifold with symplectic form $\Omega$ given by the area form.
The associated Poisson bracket on $\mathrm{S}^{2}$ is given by
\new{
\begin{equation}
\label{PoiBr}
\left\{f,g\right\}=\Omega(X_{f},X_{g}),\quad f,g\in C_{0}^{\infty}(\mathrm{S}^{2}).
\end{equation}
}
Equipped with the bracket \eqref{PoiBr}, the set $C_{0}^{\infty}(\mathrm{S}^{2})$ becomes an infinite-dimensional Poisson algebra $(C_{0}^{\infty}(\mathrm{S}^{2}),\left\{\cdot,\cdot\right\})$ with an orthogonal basis (with respect to $L^2$) given by spherical harmonics $Y_{lm}(\vartheta,\phi)$:
\begin{equation*}
Y_{lm}(\vartheta,\phi)=\sqrt{\frac{2l+1}{4\pi}\frac{(l-m)!}{(l+m)!}}\mathcal{P}_{l}^{m}(\cos\vartheta)e^{\mathrm{i}m\phi},\quad l\ge1,\,m=-l,-l+1,\ldots,l,
\end{equation*}
where $\mathcal{P}_{l}^{m}$ are the associated Legendre functions. 
Then, elements of the Poisson algebra $(C_{0}^{\infty}(\mathrm{S}^{2}),\left\{\cdot,\cdot\right\})$ are approximated by matrices in the following way \cite{Hopp,Hopp1}. 
An approximating sequence is given by the matrix Lie algebras $(\mathfrak{su}(N),[\cdot,\cdot]_{N})$, where $[\cdot,\cdot]_{N}=\frac{1}{\hbar}[\cdot,\cdot]$ for $\hbar = 2/\sqrt{N^2-1}$ is a rescaling of the matrix commutator $[\cdot,\cdot]$. 
The family of projections 
\begin{equation}
\label{proj1}
p_{N}\colon C_{0}^{\infty}(\mathrm{S}^2)\to\mathfrak{su}(N),\quad p_{N}\colon Y_{lm}\mapsto \mathrm{i}T^{N}_{lm}
\end{equation}
is defined as follows for the basis element $Y_{lm}$:
\begin{equation}
\label{proj2}
\left(T^{N}_{lm}\right)_{m_{1}m_{2}}=(-1)^{[(N-1)/2]-m_{1}}\sqrt{2l+1}
\begin{pmatrix}
\frac{N-1}{2}&l&\frac{N-1}{2}\\
-m_{1} & m &m_{2}
\end{pmatrix},
\end{equation}
where $(:::)$ denotes the Wigner 3j-symbol. 
Then, the following result of $\mathfrak{L}_{N}$-convergence holds:

\begin{theorem}[\cite{Bord1991,Bord1994}]\label{thm:Lalpha}
For any choice of matrix norms $d_{N}$, the sequence of finite-dimensional Lie algebras $(\mathfrak{su}(N),[\cdot,\cdot]_{N})$, $N\in\mathbb{N}$, with projections defined by \eqref{proj1}-\eqref{proj2}, is an $\mathfrak{L}_{N}$-approximation of the infinite-dimensional Poisson algebra $(C_{0}^{\infty}(\mathrm{S}^2),\left\{\cdot,\cdot\right\})$ with the Poisson bracket \eqref{PoiBr}.
\end{theorem}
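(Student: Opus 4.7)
The plan is to verify the three defining properties of an $\mathfrak{L}_N$-quasilimit (surjectivity, separation, and commutator convergence) by expressing everything in the bases $\{Y_{lm}\}$ of $C^{\infty}_0(\mathrm{S}^2)$ and $\{\mathrm{i}T^N_{lm}\}$ of $\mathfrak{su}(N,\mathbb{C})$, then reducing each condition to an identity or asymptotic statement involving Wigner $3j$- and $6j$-symbols.

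First, surjectivity. Since the projection $p_N$ sends $Y_{lm}$ to $\mathrm{i}T^N_{lm}$ only for $1\le l\le N-1$ (the Wigner $3j$-symbol in \eqref{proj2} vanishes whenever $l\ge N$ by the triangle inequality), I would show that the matrices $T^N_{lm}$ with $1\le l\le N-1$ are linearly independent and count dimensions: $\sum_{l=1}^{N-1}(2l+1)=N^2-1=\dim\mathfrak{su}(N)$, which immediately gives surjectivity for $N$ sufficiently large. Linear independence follows from the orthogonality relation for Wigner $3j$-symbols, which implies that the $T^N_{lm}$ are orthogonal with respect to the Hilbert--Schmidt inner product.

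Next, separation. Using the same Hilbert--Schmidt orthogonality, write any $x=\sum c_{lm}Y_{lm}$ and $y=\sum d_{lm}Y_{lm}$ so that $p_N(x)-p_N(y)=\mathrm{i}\sum_{l\le N-1}(c_{lm}-d_{lm})T^N_{lm}$. Choosing $d_N$ to be any matrix norm (all equivalent on the finite-dimensional space and comparable to the Hilbert--Schmidt norm up to $N$-dependent constants that cancel after projection of a \emph{fixed} $x,y\in\mathfrak{L}$), the condition $d_N(p_N(x),p_N(y))\to 0$ forces each coefficient $c_{lm}-d_{lm}$ to vanish, hence $x=y$.

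The hard step is the commutator convergence. I would expand
\begin{equation*}
\{Y_{l_1m_1},Y_{l_2m_2}\}=\sum_{l,m}C^{lm}_{l_1m_1,l_2m_2}\,Y_{lm},\qquad [T^N_{l_1m_1},T^N_{l_2m_2}]=\sum_{l,m}D^{N,lm}_{l_1m_1,l_2m_2}\,T^N_{lm},
\end{equation*}
where the $C$-coefficients are Gaunt integrals (products of $3j$-symbols arising from the integral of three spherical harmonics against the area form) and the $D$-coefficients are known in closed form in terms of a single Wigner $6j$-symbol together with $3j$-symbols. The proof then reduces to the asymptotic identity $\hbar^{-1}D^{N,lm}_{l_1m_1,l_2m_2}\to C^{lm}_{l_1m_1,l_2m_2}$ as $N\to\infty$ with $l_1,l_2,l$ fixed, which is exactly the Racah/Ponzano--Regge semiclassical limit of a $6j$-symbol in which one row has two large entries of size $(N-1)/2$ and one small entry. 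Combining this pointwise convergence for each pair of basis elements with a fixed choice of norm $d_N$ yields the third quasilimit condition.

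The main obstacle is thus the asymptotic analysis of the $6j$-symbol: one must verify the precise constant and the correct power $\hbar=2/\sqrt{N^2-1}$ so that the rescaled matrix commutator reproduces the Gaunt coefficients of the Poisson bracket exactly in the limit. All other steps (surjectivity and separation) are essentially linear algebra and dimension counting; once the $6j$-asymptotic is established, the theorem follows by extending from basis pairs $Y_{l_1m_1},Y_{l_2m_2}$ to arbitrary $x,y\in C^\infty_0(\mathrm{S}^2)$ by bilinearity.
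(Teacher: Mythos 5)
The paper does not prove this theorem: it is imported verbatim from \cite{Bord1991,Bord1994}, so there is no internal proof to compare your sketch against. Your outline follows the classical combinatorial route going back to Hoppe: compute the structure constants of $\mathfrak{su}(N)$ in the basis $T^N_{lm}$ (Wigner $6j$- and $3j$-symbols), compute those of the Poisson bracket of spherical harmonics, and prove convergence of the former to the latter via the semiclassical asymptotics of the $6j$-symbol. That is a legitimate strategy and historically the original one; the cited references instead obtain the result from Berezin--Toeplitz operator calculus, i.e.\ from the estimates $\|T_N(f)\|\to\|f\|_{\infty}$ and $\bigl\|\hbar^{-1}[T_N(f),T_N(g)]-T_N(\{f,g\})\bigr\|=O(N^{-1})$, which package the Wigner-symbol asymptotics into a symbol-calculus statement and avoid delicate uniform control of $6j$-symbols. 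Your surjectivity argument by dimension count, $\sum_{l=1}^{N-1}(2l+1)=N^2-1$, together with $3j$-orthogonality, is correct and standard.

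Two steps of your sketch would fail as written. First, the closing ``extension by bilinearity'': the third quasilimit condition must hold for all $x,y\in C_0^{\infty}(\mathrm{S}^2)$, which are \emph{infinite} linear combinations of spherical harmonics, so pointwise convergence of structure constants for each fixed triple $(l_1,l_2,l)$ is not enough; the number of terms contributing to $[p_N(x),p_N(y)]_N$ grows with $N$, and you need the rapid decay of the coefficients of smooth functions together with bounds on the structure constants that are uniform up to $l_i\sim N$. That uniform estimate is where the real analytic content of the theorem lies, and it is exactly what the Toeplitz-operator route supplies. Second, the separation argument: distinct matrix norms on $N\times N$ matrices are equivalent only up to constants that grow with $N$ (e.g.\ a factor $\sqrt{N}$ between operator and Frobenius norms), so the claim that these constants ``cancel'' for fixed $x,y$ is not an argument; for the operator norm one needs the nontrivial fact that $\|p_N(f)\|_{\mathrm{op}}\to\|f\|_{L^{\infty}}$, and for a genuinely arbitrary sequence of norms $d_N$ the assertion requires some normalization to be true at all. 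Neither issue is fatal to the plan, but both must be addressed before the proof closes.
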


Let us introduce the matrix operator norm (also called the spectral norm):
\begin{equation*}
\|A\|_{L_{N}^{\infty}}=\sup\limits_{\|x\|=1}\|Ax\|,\quad A\in\mathfrak{u}(N),
\end{equation*}
where $\|\cdot\|$ is the Euclidean norm.

The following results give the convergence rate for the $\mathfrak{L}_{N}$-approximation in Theorem~\ref{thm:Lalpha}.
\begin{theorem}[\cite{Bord1994}]
For every $f,g\in C^{\infty}(\mathrm{S}^{2})$ there exists $c>0$, such that
\begin{eqnarray*}
&\displaystyle\|f\|_{L^{\infty}}-c\hbar\le\|p_{N}(f)\|_{L_{N}^{\infty}}\le\|f\|_{L^{\infty}},\\
&\displaystyle\left\|\frac{1}{\hbar}[p_{N}(f),p_{N}(g)]-p_{N}\left(\left\{f,g\right\}\right)\right\|_{L_{N}^{\infty}}=O(\hbar).
\end{eqnarray*}
\end{theorem}

Later, Charles and Polterovich~\cite{ChPolt2017} established a sharper estimate: there exists constants $c_0,c_1>0$ such that for all $f,g\in C^3(\mathrm{S}^2)$
\begin{equation}
\label{PoltEstim}
\begin{split}
&\left\|\frac{1}{\hbar}[p_{N}(f),p_{N}(g)]-p_{N}\left(\left\{f,g\right\}\right)\right\|_{L_{N}^{\infty}}\le{} \hbar c_{0}\left(\|f\|_{C^{1}}\|g\|_{C^{3}}+\|f\|_{C^{2}}\|g\|_{C^{2}}+\|f\|_{C^{3}}\|g\|_{C^{1}}\right),{}\\&
\displaystyle\|f\|_{L^{\infty}}-c_1\|f\|_{C^{2}}\hbar\le\|p_{N}(f)\|_{L_{N}^{\infty}}\le\|f\|_{L^{\infty}}
\end{split}
\end{equation}
where $\|f\|_{C^{k}}=\max\limits_{i\le k}\,\sup|\nabla^{i} f|$.

\subsection{Quantized MHD system}
As we can see from \eqref{MHDvort}, the stream functions $\psi,\theta$ and the vorticities $\omega,\beta$ are related to one another through the Laplace-Beltrami operator $\Delta$. 
Therefore, to complete the spatial discretization, we need also to discretize the Laplacian. 
Indeed, the quantized Laplacian on $\mathfrak{su}(N)$ is given by the \emph{Hoppe--Yau Laplacian}~\cite{HoppYau}:
\begin{equation}
\label{DiscrLapl}
\Delta_{N}(\cdot)=\frac{1}{\hbar^2}\left([X_{1}^{N},[X_{1}^{N},\cdot]]+[X_{2}^{N},[X_{2}^{N},\cdot]]+[X_{3}^{N},[X_{3}^{N},\cdot]]\right),
\end{equation}
where 
$X_{a}$, $a=1,2,3$ are generators of a unitary irreducible ``spin $(N-1)/2$'' representation of $\mathfrak{so}(3)$, i.e.,
\begin{equation*}
\frac{1}{\hbar}[X_{a},X_{b}]=\mathrm{i}\varepsilon_{abc} X_{c},\quad X_{1}^{2}+X_{2}^{2}+X_{3}^{2}=\mathbb{I},
\end{equation*}
where $\varepsilon_{abc}$ is the Levi--Civita symbol. 
\new{Whereas the matrix realization of the generators $X_{a}$ is arbitrary, suitable explicit expressions can be found in \cite{HoppYau}.}

The Hoppe--Yau Laplacian \eqref{DiscrLapl} corresponds to the continuous Laplace-Beltrami operator $\Delta$ in the sense that the matrices $T_{lm}^{N}$ are eigenvectors of $\Delta_{N}$ with eigenvalues $-l(l+1)$:
\begin{equation}
\label{eigT}
\Delta_{N}T_{lm}^{N}=-l(l+1)T_{lm}^{N},
\end{equation}
while the spherical harmonics $Y_{lm}$ are eigenvectors of $\Delta$ with the same eigenvalues:
\begin{equation*}
\Delta Y_{lm}=-l(l+1)Y_{lm}.
\end{equation*}

Let us now give an explicit correspondence between the continuous function $\omega\in C_{0}^{\infty}(\mathrm{S}^{2})$ and its quantized counterpart $W\in\mathfrak{su}(N)$. The function $\omega$ can be decomposed in the spherical harmonics basis, $\omega=\sum_{l=1}^{\infty}\sum_{m=-l}^{l}\omega^{lm}Y_{lm}$ and therefore
\begin{equation*}
W=p_{N}(\omega)=\sum\limits_{l=1}^{N-1}\sum\limits_{m=-l}^{l}\mathrm{i}\omega^{lm}T_{lm}^{N}.
\end{equation*}
If the function $\omega\in C_{0}^{\infty}(\mathrm{S}^{2})$ is real-valued, then $\omega^{lm}=(-1)^{m}\omega^{l(-m)}$, which implies that the matrix $W$ is skew-Hermitian:
\begin{equation*}
W+W^{\dagger}=0 \iff W\in\mathfrak{u}(N).
\end{equation*}
Furthermore, since $\omega$ has vanishing circulation, we have $\omega^{0,0} = 0$, which implies that $\operatorname{tr}(W) = 0$, i.e., $W \in \mathfrak{su}(N)$. 
Also, the Hoppe--Yau Laplacian $\Delta_{N}$ restricts to a bijective operator on $\mathfrak{su}(N)$
\begin{equation*}
\Delta_{N}\colon\mathfrak{su}(N)\to\mathfrak{su}(N).
\end{equation*}

We have now all the ingredients to write down the spatially discretized analogue of incompressible MHD equations \eqref{MHDvort} on the sphere, similarly to how it is done for the incompressible Euler equations in the work of \cite{Zeit}. 
Namely, we replace the continuous flow \eqref{MHDvort} with its quantized counterpart:
\begin{equation}
\label{qMHD}
\left\{
\begin{aligned}
&\dot W=\frac{1}{\hbar}[W,\Delta^{-1}_{N}W]+\frac{1}{\hbar}[\Theta,\Delta_{N}\Theta], \\
&\dot\Theta=\frac{1}{\hbar}[\Theta,\Delta^{-1}_{N}W],
\end{aligned}
\right.
\end{equation}
where $W,\Theta\in\mathfrak{su}(N)$.
\begin{remark}
In case of trivial magnetic field, $\Theta=0$, equation \eqref{qMHD} coincides with Zeitlin's model for the incompressible Euler equations on the sphere. 
\end{remark}

Let us now give a different description of the matrices $T_{lm}^{N}$.
Despite that we have the explicit formula~\eqref{proj2} for them, its usage is not efficient due to high computational complexity of the algorithm for finding Wigner 3j symbols. 
Instead, let us note that due to construction \eqref{DiscrLapl} of the Hoppe--Yau Laplacian, it preserves the space of matrices with zero entries except on $\pm m$ off diagonals. 
This allows us to identify the corresponding eigenmatrix $T_{lm}^{N}$ with a sparse skew-hermitian matrix that has non-trivial entries only on the $m$ off diagonal, thus reducing the eigenvalue problem \eqref{eigT} to an $(N-|m|)$-dimensional tridiagonal eigenvalue problem. 
Therefore, the complexity of computing the entire basis $(T_{lm}^{N})$ for fixed $N$ is $O(N^{2})$, instead of $O(N^{4})$ if $\Delta_{N}$ were a full matrix. 
Further, to compute the commutator requires $O(N^{3})$ operations per iteration, which gives the entire complexity of the algorithm as $O(N^{3})$ per iteration.
For details, see \cite{CiViMo2023}.
\subsection{Lie--Poisson nature of the quantized flow}
One essential property of Zeitlin's approach via quantization is that it preserves the Lie--Poisson nature of the flow.
In other words, the quantized flow is a Lie--Poisson system, exactly as the continuous one, but on a finite-dimensional counterpart of $\mathfrak{imh}^{*}$.

Introducing $M_{1}=\Delta^{-1}_{N}W$ and $M_{2}=\Delta_{N}\Theta$, and, for convenience, rescaling time by $\hbar$, we rewrite system \eqref{qMHD} as
\begin{equation}
\label{qEu}
\left\{
\begin{aligned}
&\dot W=[W,M_{1}]+[\Theta,M_{2}], \\
&\dot\Theta=[\Theta,M_{1}].
\end{aligned}
\right.
\end{equation}
We now show that \eqref{qEu} is a Lie--Poisson system on the dual of the Lie algebra $\mathfrak{f}=\mathfrak{su}(N)\ltimes\mathfrak{su}(N)^{*}$.

First, we introduce the \textit{magnetic extension} $F=\mathrm{SU}(N)\ltimes\mathfrak{su}(N)^{*}$ of the group $\mathrm{SU}(N)$. The group operation in $F$ is
\begin{equation*}
(\varphi,a)\cdot(\psi,b)=(\varphi\psi,\mathrm{Ad}^{*}_{\psi}a+b),\quad \varphi,\psi\in\mathrm{SU}(N),\,a,b\in\mathfrak{su}(N)^{*}.
\end{equation*}
The adjoint operator on the Lie algebra $\mathfrak{f}=\mathfrak{su}(N)\ltimes\mathfrak{su}(N)^{*}$ is
\begin{equation*}
\mathrm{ad}_{(v,a)}\colon\mathfrak{f}\to\mathfrak{f},\quad \mathrm{ad}_{(v,a)}(w,b)=([v,w],\mathrm{ad}^{*}_{w}a-\mathrm{ad}^{*}_{v}b)
\end{equation*}
for $v,w\in\mathfrak{su}(N)$, $a,b\in\mathfrak{su}(N)^{*}$. From now on, we will identify the Lie algebra $\mathfrak{su}(N)$ with its dual $\mathfrak{su}(N)^{*}$ via the Frobenius inner product
\begin{equation}
\label{frob}
\langle A,B\rangle=\mathrm{tr}(A^{\dagger}B),\quad A\in\mathfrak{su}(N)^{*},\,B\in\mathfrak{su}(N).
\end{equation}
Then, the dual $\mathfrak{f}^{*}=\left(\mathfrak{su}(N)\ltimes\mathfrak{su}(N)^{*}\right)^{*}\simeq\mathfrak{su}(N)\ltimes\mathfrak{su}(N)^{*}$ can be identified with $\mathfrak{f}$ via the pairing
\begin{equation*}
\langle(\xi,a),(w,b)\rangle=\langle b,\xi\rangle+\langle a,w\rangle,
\end{equation*}
where $\langle\cdot,\cdot\rangle$ is defined by \eqref{frob} for $\xi,w\in\mathfrak{su}(N)$, $a,b\in\mathfrak{su}(N)^{*}$,
and the coadjoint action of $\mathfrak{f}$ on $\mathfrak{f}^{*}$ is
\begin{equation*}
\mathrm{ad}_{(v,a)}^{*}\colon\mathfrak{f}^{*}\to\mathfrak{f}^{*},\quad \mathrm{ad}^{*}_{(v,a)}(w,b)=([w,v],\mathrm{ad}^{*}_{v}b-\mathrm{ad}^{*}_{w}a),
\end{equation*}
where $(v,a)\in\mathfrak{f}$, $(w,b)\in\mathfrak{f}^{*}$. Using \eqref{frob}, one can get an explicit formula for $\mathrm{ad}^{*}$ operator as
\begin{equation*}
\mathrm{ad}^{*}_{(v,a)}(w,b)=([w,v],[v^{\dagger},b]+[a,w^{\dagger}]).
\end{equation*}

Summarizing the above discussion, we arrive at the following result.
\begin{proposition}
System \eqref{qEu} is a Lie--Poisson flow on the dual $\mathfrak{f}^{*}$ of the Lie algebra $\mathfrak{f}=\mathfrak{su}(N)\ltimes\mathfrak{su}(N)^{*}$:
\begin{equation*}
\dot J=\operatorname{ad}^{*}_{M}J,
\end{equation*}
where $J=(\Theta,W^{\dagger})\in\mathfrak{f}^{*}$, $M=(M_{1},M_{2}^{\dagger})\in\mathfrak{f}$, with the Hamiltonian
\begin{equation}
\label{hamilt}
H(W,\Theta)=\frac{1}{2}\left(\mathrm{tr}(W^{\dagger}M_{1})+\mathrm{tr}(\Theta^{\dagger}M_{2})\right).
\end{equation}
\end{proposition}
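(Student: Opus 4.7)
The plan is to verify the claim by a direct substitution into the general Lie--Poisson equation $\dot J = \mathrm{ad}^*_M J$, followed by checking that the matrix $M$ is the functional derivative of the Hamiltonian \eqref{hamilt} with respect to $J$.

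Step 1. I would start by plugging the candidates $(v, a) = M = (M_1, M_2^\dagger)$ and $(w, b) = J = (\Theta, W^\dagger)$ directly into the explicit formula
\begin{equation*}
\mathrm{ad}^*_{(v,a)}(w,b) = \bigl([w, v],\; [v^\dagger, b] + [a, w^\dagger]\bigr)
\end{equation*}
derived above. This yields $\mathrm{ad}^*_M J = \bigl([\Theta, M_1],\; [M_1^\dagger, W^\dagger] + [M_2^\dagger, \Theta^\dagger]\bigr)$. The first slot is precisely $\dot\Theta$ by the second line of \eqref{qEu}. For the second slot, I take the Hermitian conjugate of the first line of \eqref{qEu}; the elementary identity $[A, B]^\dagger = [B^\dagger, A^\dagger]$ converts $\dot W$ into exactly $[M_1^\dagger, W^\dagger] + [M_2^\dagger, \Theta^\dagger]$, so the coadjoint equation reproduces \eqref{qEu}.

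Step 2. It remains to check that $M$ is the functional derivative of $H$ with respect to $J$. Using self-adjointness of $\Delta_N$ and $\Delta_N^{-1}$ with respect to the Frobenius product \eqref{frob}, I rewrite \eqref{hamilt} as
\begin{equation*}
H = \tfrac{1}{2}\langle W, \Delta_N^{-1} W\rangle + \tfrac{1}{2}\langle \Theta, \Delta_N \Theta\rangle,
\end{equation*}
whose variation is $dH = \langle \delta W, M_1\rangle + \langle \delta \Theta, M_2\rangle$. I then match this against the cross-pairing $\langle \delta J, (v, a)\rangle_{\mathfrak{f}^*\times\mathfrak{f}} = \langle a, \delta\Theta\rangle + \langle \delta W^\dagger, v\rangle$ introduced above; pushing the dagger through on $\delta W$ identifies the components $(v, a)$ with $(M_1, M_2^\dagger)$, i.e.\ with $M$ itself (up to the overall sign fixed by the $\pm$ Lie--Poisson convention).

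The only real obstacle is bookkeeping: keeping the $\mathfrak{su}(N)$ vs.\ $\mathfrak{su}(N)^*$ labels straight across the semidirect product, tracking the cross-pairing between the two slots of $\mathfrak{f}^*\simeq \mathfrak{f}$, and being consistent about the signs produced by the Hermitian conjugates $(\cdot)^\dagger$ acting on elements of $\mathfrak{su}(N)$. Once those conventions are pinned down, each step reduces to a single algebraic identity, so no deeper structural argument is needed.
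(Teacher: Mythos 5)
Your verification is correct and follows essentially the same route as the paper, which simply assembles the proposition from the explicit coadjoint formula $\mathrm{ad}^{*}_{(v,a)}(w,b)=([w,v],[v^{\dag},b]+[a,w^{\dag}])$ derived immediately beforehand and offers no further argument. Your Step 2, identifying $M=(M_{1},M_{2}^{\dag})$ as the functional derivative of $H$ via the cross-pairing, makes explicit a check the paper leaves implicit, and your handling of the sign and dagger conventions is consistent with the paper's identifications.
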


The Hamiltonian nature of the quantized flow \eqref{qEu} suggests that there are quantized analogues of the Casimirs \eqref{enstr}.
Indeed, they are 
\begin{equation}
\label{qCas1}
\mathcal{C}^N_{f}(i\Theta)=\frac{4\pi}{N}\mathrm{tr}\left(f(\Theta)\right),\quad I^N_{g}(i W,i \Theta)=\frac{4\pi}{N}\mathrm{tr}(Wg(\Theta)),
\end{equation}
for arbitrary smooth functions $f\colon \mathbb{R}\to\mathbb{R}$ and $g\colon \mathbb{R}\to\mathbb{R}$. 

Since the flow \eqref{qEu} is a finite-dimensional Lie--Poisson system, only a finite number of Casimirs are independent. Therefore, it is enough to consider functions $f$ and $g$ being monomials, and the corresponding Casimirs of the quantized MHD flow \eqref{qEu} are
\begin{equation}
\label{qCas}
\mathcal{C}^N_{m}(i\Theta)=\frac{4\pi}{N}\mathrm{tr}\left(\Theta^{m}\right),\quad I^N_{m}(i W,i \Theta)=\frac{4\pi}{N}\mathrm{tr}(W\Theta^{m}),\quad m=1,\ldots,N.
\end{equation}

\subsubsection{Convergence results for quantized Casimirs}
Here, we prove that the quantized Casimirs \eqref{qCas} converge to the corresponding continuous Casimirs
\begin{equation*}
\mathcal{C}_{m}(\theta)=\int\limits_{S^{2}}\theta^{m}\mu,\quad I_{m}(\theta,\omega)=\int\limits_{S^{2}}\omega\theta^{m}\mu.
\end{equation*}

\begin{theorem}\label{thm:casimir_convergence}
  There exist a constant $c_{1}>0$ such that for all $\omega,\theta\in C^2(\mathrm{S}^2)$  we have
  \begin{equation*}
  \begin{split}
    &\lvert \mathcal{C}^N_m(p_N\theta) - \mathcal{C}_m(\theta) \rvert \leq \beta_{1}(\theta,m)\hbar,\\{}&
   \left|I^{N}_{m}(p_{N}\theta,p_{N}\omega)-I_{m}(\theta,\omega)\right|\le4\pi\|\omega\|_{\infty}\beta_{2}(\theta,m)\hbar,
    \end{split}
  \end{equation*}
  where
  \begin{equation*}
  \begin{split}
&\beta_{1}(\theta,m)=c_{1}\sum\limits_{j=2}^{m-1}(4\pi\|\theta\|_{\infty})^{m-j}\left(\|\theta\|_{\infty}\|\theta^{j-1}\|_{C^{2}}+\|\theta\|_{C^{1}}\|\theta^{j-1}\|_{C^{1}}\right),{}\\&
\beta_{2}(\theta,m)=c_{1}\sum\limits_{j=2}^{m}(4\pi\|\theta\|_{\infty})^{m-j}\left(\|\theta\|_{\infty}\|\theta^{j-1}\|_{C^{2}}+\|\theta\|_{C^{1}}\|\theta^{j-1}\|_{C^{1}}\right)
\end{split}
\end{equation*}
  In particular, $\lvert I^{N}_{m}(p_N\theta,p_N\omega) - I_m(\theta,\omega) \rvert = \mathcal{O}(1/N)$, and $\lvert \mathcal{C}^N_m(p_N\theta) - \mathcal{C}_m(\theta) \rvert = \mathcal{O}(1/N)$ as $N\to\infty$.
\end{theorem}
\new{
\begin{remark}
One observes that the rate of spatial convergence is $1/N$, and the discrete model has $N^{2}$ degrees of freedom. 
In other words, the order of convergence is $1/2$.
\end{remark}
}
\begin{proof}
  Set $\mathrm{i}\Theta_N = p_N\theta$. Then
  \begin{align*}
       \mathcal{C}^{N}_m(p_N\theta) =& \frac{4\pi}{N} \operatorname{tr}(\Theta_N^m) = \frac{4\pi}{N}\operatorname{tr}(\Theta_N \Theta_N^{m-1}) = \frac{4\pi}{N}\operatorname{tr}\left( \Theta_N\big(\Theta_N^{m-1}- p_N(\theta^{m-1})/\mathrm{i} + p_N(\theta^{m-1})/\mathrm{i}\big) \right) = \\
      &  \frac{4\pi}{N}\operatorname{tr}\left( \Theta_N  p_N(\theta^{m-1})/\mathrm{i}\right) + \frac{4\pi}{N}\operatorname{tr}\left(\Theta_N\big(\Theta_N^{m-1}-p_N(\theta^{m-1})/\mathrm{i}\big) \right).
  \end{align*}
We thus obtain the following estimate:
\begin{equation}
\label{casconvineq}
\begin{split}
&\left|\mathcal{C}^{N}_{m}(p_{N}\theta)-\frac{4\pi}{N}\operatorname{tr}\left(\Theta_{N}p_{N}(\theta^{m-1})/\mathrm{i}\right)\right|=\left| \frac{4\pi}{N}\operatorname{tr}\left( \Theta_N\big(\Theta_N^{m-1}-p_N(\theta^{m-1})/\mathrm{i}\big)\right)  \right| \leq  {}\\&\underbrace{\left(\frac{1}{N}\sum_{k=1}^N \lvert \lambda_k(\Theta_{N}) \rvert \right)}_{\lVert \Theta_N\rVert_1}\lVert \Theta_N^{m-1} - p_N(\theta^{m-1})/\mathrm{i} \rVert_{L_{N}^{\infty}}\le 4\pi\|\theta\|_{\infty}\lVert \Theta_N^{m-1} - p_N(\theta^{m-1})/\mathrm{i} \rVert_{L_{N}^{\infty}},
\end{split}
\end{equation}
because $\lVert \Theta_N\rVert_1\le 4\pi\|\Theta_{N}\|_{L_{N}^{\infty}}\le4\pi\|\theta\|_{\infty}$, see \cite[lemma 6]{MoPe2024} and the second estimate in \eqref{PoltEstim}. 

Let us first elaborate on the term $\lVert\Theta_N^{m-1} - p_N(\theta^{m-1})/\mathrm{i} \rVert_{L_{N}^{\infty}}$.
\begin{equation}
\label{casconvineq1}
\begin{split}
\|\Theta_{N}^{m-1}-p_{N}(\theta^{m-1})/\mathrm{i}\|_{L_{N}^{\infty}}=&\|\Theta_{N}\Theta_{N}^{m-2}-p_{N}(\theta^{m-1})/\mathrm{i}+\Theta_{N} p_{N}(\theta^{m-2})/\mathrm{i}-\Theta_{N}p_{N}(\theta^{m-2})/\mathrm{i}\|_{L_{N}^{\infty}}\le{}\\&\|\Theta_{N}(\Theta_{N}^{m-2}-p_{N}(\theta^{m-2})/\mathrm{i})\|_{L_{N}^{\infty}}+\|p_{N}(\theta^{m-1})-\Theta_{N} p_{N}(\theta^{m-2})\|_{L_{N}^{\infty}}\le{}\\&\|\Theta_{N}\|_{1}\|\Theta_{N}^{m-2}-p_{N}(\theta^{m-2})/\mathrm{i}\|_{L_{N}^{\infty}}+\|p_{N}(\theta^{m-1})-\Theta_{N}p_{N}(\theta^{m-2})\|_{L_{N}^{\infty}}\leq{}\\&4\pi\|\theta\|_{\infty}\|\Theta_{N}^{m-2}-p_{N}(\theta^{m-2})/\mathrm{i}\|_{L_{N}^{\infty}}+\|p_{N}(\theta^{m-1})-\Theta_{N}p_{N}(\theta^{m-2})\|_{L_{N}^{\infty}}.
\end{split}
\end{equation}
Introducing the notation 
\begin{equation*}
J(m-1)=\|\Theta_{N}^{m-1}-p_{N}(\theta^{m-1})/\mathrm{i}\|_{L_{N}^{\infty}},\quad K(m-1)=\|p_{N}(\theta^{m-1})-\Theta_{N}p_{N}(\theta^{m-2})\|_{L_{N}^{\infty}},\quad A=4\pi\|\theta\|_{\infty},
\end{equation*}
we can write \eqref{casconvineq1} as
\begin{equation*}
J(m-1)\le AJ(m-2)+K(m-1),
\end{equation*}
and therefore
\begin{equation*}
J(m-1)\le \sum\limits_{j=2}^{m-1}A^{m-j-1}K(j),
\end{equation*}
which implies that \eqref{casconvineq} reads
\begin{equation*}
\left|\mathcal{C}^{N}_{m}(p_{N}\theta)-\frac{4\pi}{N}\operatorname{tr}\left(\Theta_{N}p_{N}(\theta^{m-1})/\mathrm{i}\right)\right|\le\sum\limits_{j=2}^{m-1}A^{m-j}K(j)
\end{equation*}
We observe that due to estimate in \cite[prop. 3.12]{ChPolt2017}, it follows that 
\begin{equation}
\label{estimJK}
K(j)=\|p_{N}(\theta^{j})-\Theta_{N}p_{N}(\theta^{j-1})\|_{L_{N}^{\infty}}\le c_{1}\hbar\left(\|\theta\|_{\infty}\|\theta^{j-1}\|_{C^{2}}+\|\theta\|_{C^{1}}\|\theta^{j-1}\|_{C^{1}}\right),
\end{equation}
and we finally conclude that
\begin{equation}
\label{estim1CfN}
\left|\mathcal{C}^{N}_{m}(p_{N}\theta)-\frac{4\pi}{N}\operatorname{tr}\left(\Theta_{N}\frac{p_{N}(\theta^{m-1})}{\mathrm{i}}\right)\right|\le\beta_{1}(\theta,m)\hbar,\quad N\to\infty,
\end{equation}
where $\beta_{1}(\theta,m)\ge0$ is determined via \eqref{estimJK}:
\begin{equation*}
\beta_{1}(\theta,m)=c_{1}\sum\limits_{j=2}^{m-1}(4\pi\|\theta\|_{\infty})^{m-j}\left(\|\theta\|_{\infty}\|\theta^{j-1}\|_{C^{2}}+\|\theta\|_{C^{1}}\|\theta^{j-1}\|_{C^{1}}\right),
\end{equation*}
where $c_{1}>0$ is a constant.

As a by product, we also get that
\begin{equation}
\label{thetaconverg}
\lVert\Theta_N^{m} - p_N(\theta^{m})/\mathrm{i} \rVert_{L_{N}^{\infty}}\le\beta_{2}(\theta,m)\hbar=\mathcal{O}(1/N),\quad N\to\infty,
\end{equation}
where
\begin{equation*}
\beta_{2}(\theta,m)=c_{1}\sum\limits_{j=2}^{m}(4\pi\|\theta\|_{\infty})^{m-j}\left(\|\theta\|_{\infty}\|\theta^{j-1}\|_{C^{2}}+\|\theta\|_{C^{1}}\|\theta^{j-1}\|_{C^{1}}\right)
\end{equation*}
for any $m\ge2$.

  Further, since the scaled Frobenius inner product $\frac{4\pi}{N}\operatorname{tr}(AB)$ on $i\mathfrak{u}(N)$ corresponds to the $L^2$ inner product, i.e., $\langle p_N^*iA,p_N^*iB \rangle_{L^2} = \frac{4\pi}{N}\operatorname{tr}(AB) =: \langle A,B\rangle_{L^2_N}$ for the $L^2$ isometric embedding $p_N^*\colon \mathfrak{u}(N)\to L^2(\mathrm{S}^2)$, we have 
  \begin{equation*}
      \lim_{N\to\infty} \frac{4\pi}{N} \operatorname{tr}\left(\Theta_N  \frac{p_N(\theta^{m-1})}{\mathrm{i}}\right) =\langle \theta,\theta^{m-1}\rangle_{L^2} = \mathcal{C}_{m}(\theta),    
  \end{equation*}
  moreover, due to estimates in \cite[lemma 5]{MoPe2024}, we have that
  \begin{equation}
  \label{estim2CfN}
  \left|\mathcal{C}_{m}(\theta)-\frac{4\pi}{N}\operatorname{tr}\left(\Theta_{N}\frac{p_{N}(\theta^{m-1})}{\mathrm{i}}\right)\right|\le2\hbar^{2}\|\theta\|_{H^{2}(S^{2})}\|\theta^{m-1}\|_{H^{2}(S^{2})}=\beta_{3}(\theta)\hbar^{2},
\end{equation}
where $\beta_{3}(\theta,m)=2\|\theta\|_{H^{2}(S^{2})}\|\theta^{m-1}\|_{H^{2}(S^{2})}$
  and therefore summing \eqref{estim1CfN} and \eqref{estim2CfN}, we get that for sufficiently large $N$ 
  \begin{equation*}
  \begin{split}
  \left|\mathcal{C}_{m}^{N}(p_{N}\theta)-\mathcal{C}_{m}(\theta)\right|\le&\left|\mathcal{C}^{N}_{m}(p_{N}\theta)-\frac{4\pi}{N}\operatorname{tr}\left(\Theta_{N}\frac{p_{N}(\theta^{m-1})}{\mathrm{i}}\right)\right|+{}\\&\left|\frac{4\pi}{N}\operatorname{tr}\left(\Theta_{N}\frac{p_{N}(\theta^{m-1})}{\mathrm{i}}\right)-\mathcal{C}_{m}(\theta)\right|\le\beta_{1}(\theta,m)\hbar+\beta_{3}(\theta,m)\hbar^{2}=\mathcal{O}(1/N),
  \end{split}
  \end{equation*}
which finally proves $\mathcal{O}(1/N)$ convergence of $\mathcal{C}_{m}^{N}(p_{N}\theta)$ as $N\to\infty$.

Now we prove a similar result for the cross-helicity Casimir $I_{m}$. 
\begin{equation*}
\begin{split}
I_{m}^{N}(p_{N}\theta,p_{N}\omega)=\frac{4\pi}{N}\operatorname{tr}(W_{N}\Theta_{N}^{m})=\frac{4\pi}{N}\operatorname{tr}\left(W_{N}\frac{p_{N}(\theta^{m})}{\mathrm{i}}\right)+\frac{4\pi}{N}\operatorname{tr}\left(W_{N}\left(\Theta_{N}^{m}-\frac{p_{N}(\theta^{m})}{\mathrm{i}}\right)\right).
\end{split}
\end{equation*}
For sufficiently large $N$,
\begin{equation}
\label{convhelic1}
\left|\frac{4\pi}{N}\operatorname{tr}\left(W_{N}\frac{p_{N}(\theta^{m})}{\mathrm{i}}\right)-I_{m}(\theta,\omega)\right|\le2\hbar^{2}\|\omega\|_{H^{2}(S^{2})}\|\theta^{m}\|_{H^{2}(S^{2})}=\beta_{4}(\theta,\omega)\hbar^{2},
\end{equation}
where $\beta_{4}(\theta,\omega)=2\|\omega\|_{H^{2}(S^{2})}\|\theta^{m}\|_{H^{2}(S^{2})}$.

Further,
\begin{equation}
\label{convhelic2}
\begin{split}
&\left|I_{m}^{N}(p_{N}\theta,p_{N}W)-\frac{4\pi}{N}\operatorname{tr}\left(W_{N}\frac{p_{N}(\theta^{m})}{\mathrm{i}}\right)\right|=\left|\frac{4\pi}{N}\operatorname{tr}\left(W_{N}\left(\Theta_{N}^{m}-\frac{p_{N}(\theta^{m})}{\mathrm{i}}\right)\right)\right|\le{}\\&\underbrace{\left(\frac{1}{N}\sum_{k=1}^N \lvert \lambda_k(W_{N}) \rvert \right)}_{\lVert W_N\rVert_1}\lVert \Theta_N^{m} - p_N(\theta^{m})/\mathrm{i} \rVert_{L_{N}^{\infty}}\le4\pi\|\omega\|_{\infty}\beta_{2}(\theta,m)\hbar
\end{split}
\end{equation}
due to \eqref{thetaconverg}, and finally summing \eqref{convhelic1} and \eqref{convhelic2}, we get
\begin{equation*}
\left|I^{N}_{m}(p_{N}\theta,p_{N}\omega)-I_{m}(\theta,\omega)\right|\le4\pi\|\omega\|_{\infty}\beta_{2}(\theta,m)\hbar+\beta_{4}(\theta,\omega)\hbar^{2}=\mathcal{O}(1/N)
\end{equation*}
for $N\to\infty$.
\end{proof}

\begin{remark}
Preservation of the Casimirs $\mathcal{C}_{m}^{N}(\Theta)$ \new{for $m=1,\ldots,N$} is equivalent to preservation of the spectrum of $\Theta$.
\new{Indeed, this follows from the relations $\sum_{k=1}^N \lambda_k(\Theta)^m = \operatorname{tr}(\Theta^m)$.}
\end{remark}

\section{Lie--Poisson preserving time integrator}\label{sec:temporal_discretization}
To get the fully discretized incompressible MHD equations, one also needs to discretize system \eqref{qEu} in time. 
There are generic time integration methods for Lie--Poisson systems (see, e.g., \cite{BoMa2016}), but these make heavy use of the matrix exponential.
Such methods are computationally too expensive when the dimension of the Lie algebra is large (as in the case here).
Our goal is instead to develop a ``matrix exponential free'' integrator that preserves the underlying Lie--Poisson geometry of the flow \eqref{qEu}, meaning it should preserve the Casimirs \eqref{qCas} exactly, be a symplectic map on the coadjoint orbits of $\mathfrak{f}^{*}$, and thereby nearly preserves the Hamiltonian \eqref{hamilt} in the sense of backward error analysis \cite{HaiLubWan}.

There are several ways to construct symplectic integrators for Hamiltonian systems on $T^*\mathbb{R}^n$, among them are \emph{symplectic Runge-Kutta methods}~\cite{Sa1988}. 
Given a Butcher tableau
\begin{equation*}
\begin{array}
{c|cccc}
c_{1} & a_{11} & a_{12} &\cdots & a_{1s}\\
c_{2} &a_{21} & a_{22} &\cdots & a_{2s}\\
\vdots &\vdots & \vdots &\ddots & \vdots \\
c_{s}& a_{s1} & a_{s2} &\cdots & a_{ss}\\
\hline
& b_{1} & b_{2} &\cdots & b_{s}
\end{array}
\end{equation*}
with $b_{i}a_{ij}+b_{j}a_{ji}=b_{i}b_{j}$ for all $i,j=1,\ldots,s$, the corresponding method being applied to Hamiltonian systems on a symplectic vector space $(\mathbb{R}^{2n},\Omega)$ is symplectic. An example is the implicit midpoint method. 
However, when directly applied to a Lie--Poisson system, a symplectic Runge-Kutta scheme does not yield a Poisson integrator.

There exist a few approaches to obtain Poisson integrators for Lie--Poisson systems $(P,\left\{\cdot,\cdot\right\},H)$.
\begin{itemize}
\item If $P=\mathfrak{g}^{*}$, and the Hamiltonian $H$ can be split into the sum of integrable Hamiltonians, $H=\sum H_{i}$, one can use splitting methods \cite{McQui1,McQui2}.
\item If $P=\mathfrak{g}^{*}$, the Lie--Poisson system is a Poisson reduction of a Hamiltonian system on $T^{*}G$. In this case, the discrete Lie--Poisson flow is constructed from a discrete $G$-invariant Lagrangian on $TG$ \cite{ChanScov,MarsPekSh}. The other approach is to embed $G$ in a linear space and use constrained symplectic integrator RATTLE \cite{ChanScov1,Jay,McLModVerdWil}. This, however, results in a very complicated scheme on high dimensional vector spaces. For example, in case of a 2-dimensional sphere $\mathrm{S}^{2}$, which is a coadjoint orbit of $\mathfrak{so}(3)^{*}$, one would lift the equations to $T^{*}\mathrm{SO}(3)$ embedded in $T^{*}\mathbb{R}^{3\times 3}$ with dimension 18.
\item For domains originating from the generalized Hopf fibration, one can use \textit{collective symplectic integrators}. See \cite{McLachModVerd} for details.
\end{itemize}

The other approach is to make use of the Poisson reduction (more precisely, Poisson reconstruction) to reduce the discrete symplectic flow on $T^{*}G$ (for example, symplectic Runge-Kutta method) to a discrete Lie--Poisson flow on $\mathfrak{g}^{*}$ \cite{ModViv}. This is how \textit{isospectral Runge-Kutta methods} were developed for a large class of isospectral flows on $J$-quadratic Lie algebras, including the Euler-Zeitlin equations on a sphere. The main advantages of the method are that it is formulated directly on the algebra, does not involve expensive group-to-algebra maps, and can be applied to any isospectral flow. Therefore, we might expect that using the strategy from \cite{ModViv} to construct a Lie--Poisson integrator for \eqref{qEu} will give the same benefits, as isospectral flows considered in \cite{ModViv} have a similar geometry as equations \eqref{qEu} do. 

We mention also the work of Kraus, Tassi, and Grasso \cite{KrTassGrass2016}, where an integrator for 2-D MHD on the plane is developed. 
The integrator preserves the linear and quadratic Casimirs and the energy of MHD equations on the plane. 
However, the method does not preserve higher order Casimir, nor the Lie--Poisson structure.

As we shall see below, the strategy of using the Poisson reduction results in the numerical scheme for incompressible MHD on the sphere that completely preserves the underlying geometry of the equations. 

\subsection{Matrix representation of $\mathfrak{su}(N)\ltimes\mathfrak{su}(N)^{*}$}
The first natural attempt to derive structure preserving integrator for \eqref{qEu} is to represent it as an isospectral flow on a space of $2N\times 2N$ matrices, in other words, to convert the system of matrix equations \eqref{qEu} into a single matrix flow. 
That could potentially make it possible to apply the isospectral integrators developed in \cite{ModViv}.

Let us introduce the two lower triangular block matrices
\begin{equation}
\label{semdirrepr}
V=\begin{pmatrix}
\Theta & 0\\
W & \Theta
\end{pmatrix},\quad
M=\begin{pmatrix}
M_{1} & 0\\
M_{2} & M_{1}
\end{pmatrix}.
\end{equation}
This embeds $\mathfrak{f} = \mathfrak{su}(N)\ltimes \mathfrak{su}(N)^*$ as a subalgebra of $\mathfrak{gl}(2N,\mathbb{C})$ such that the equations \eqref{qEu} constitute an isospectral flow of matrices of the form \eqref{semdirrepr}:
\begin{equation}
\label{isoMHD}
\dot V=[V,M(V)].
\end{equation}

Let us check whether $\mathfrak{f}\subset \mathfrak{gl}(2N,\mathbb{C})$ fits the conditions stated in \cite{ModViv} for isospectral symplectic Runge-Kutta integrators to work, i.e., that it is $J$-\textit{quadratic} and \emph{reductive}.

\begin{definition}
  Let $J$ be a matrix such that $J^2 = c I$, where $I$ is the identity matrix.
  The corresponding $J$-\textit{quadratic} Lie algebra $\mathfrak{g}\subset\mathfrak{gl}(N,\mathbb{C})$ is given by
  \begin{equation*}
  A \in \mathfrak{g} \iff A^{\dagger}J+JA=0.
  \end{equation*}
\end{definition}

\begin{lemma}\label{lemmaMHDiso}
  Let $\mathfrak{g}\subset\mathfrak{gl}(N,\mathbb{C})$ be $J$-quadratic. 
  Then the Lie algebra $\mathfrak{g}\ltimes\mathfrak{g}^{*}\subset\mathfrak{gl}(2N,\mathbb{C})$ is a subalgebra of the $\tilde{J}$-quadratic Lie algebra $\tilde{\mathfrak{g}}$ for
  \begin{equation}
  \label{Jmatrix}
  \tilde{J}=\begin{pmatrix}
  0 & J\\
  J & 0
  \end{pmatrix} .
  \end{equation}
\end{lemma}

\begin{proof}
  Clearly, $\tilde{J}^{2}=cI$. 
  Let $A\in\mathfrak{g}\ltimes\mathfrak{g}^* \subset \mathfrak{gl}(2N, \mathbb{C})$, i.e.
  \begin{equation*}
  A=\begin{pmatrix}
  \Theta & 0\\
  W & \Theta
  \end{pmatrix},
  \end{equation*}
  where $W,\Theta\in\mathfrak{g}$. Since $\mathfrak{g}$ is $J$-quadratic, we have
  \begin{equation*}
  \begin{aligned}
  &\Theta^{\dagger}J+J\Theta=0\Longrightarrow\Theta^{\dagger}=-\frac{1}{c}J\Theta J,\\
  &W^{\dagger}J+JW=0\Longrightarrow W^{\dagger}=-\frac{1}{c}JWJ.
  \end{aligned}
  \end{equation*}
  We aim to prove that $A^{\dagger}\tilde{J}+\tilde{J}A=0$. 
  First, we get
  \begin{equation*}
  A^{\dagger}=-\frac{1}{c}\begin{pmatrix}
  J\Theta J & JWJ\\
  0 & J\Theta J
  \end{pmatrix},
  \end{equation*}
  and therefore
  \begin{equation*}
  A^{\dagger}\tilde{J}+\tilde{J}A=-\frac{1}{c}\begin{pmatrix}
  J\Theta J & JWJ\\
  0 & J\Theta J
  \end{pmatrix}\begin{pmatrix}
  0 & J\\
  J & 0
  \end{pmatrix}+\begin{pmatrix}
  0 & J\\
  J & 0
  \end{pmatrix}\begin{pmatrix}
  \Theta & 0\\
  W & \Theta
  \end{pmatrix}=0.
  \end{equation*}
  This concludes the proof.
\end{proof}

At first glance, this result indicates that the isospectral flow \eqref{isoMHD} is suitable for isospectral integrators, in particular, the midpoint isospectral integrator \cite{ModViv,Milo}
\begin{equation}
\label{MHDisoInt}
\begin{aligned}
&V_{n}=\left(I+\frac{h}{2}M(\tilde V)\right)\tilde V\left(I-\frac{h}{2}M(\tilde V)\right),\\
&V_{n+1}=V_{n}+h[\tilde V,M(\tilde V)],
\end{aligned}
\end{equation}
because $\mathfrak{g}=\mathfrak{su}(N)$ is $J$-quadratic with $J=I$. 
More precisely, 
\begin{theorem}
\label{theorem2}
The scheme \eqref{MHDisoInt} constitutes an isospectral integrator for the isospectral flow \eqref{isoMHD}. It preserves the Casimirs 
\begin{equation}
\label{casV}
\mathrm{tr}(V_{n}^{k})=\mathrm{tr}(V_{n+1}^{k}).
\end{equation}
\end{theorem}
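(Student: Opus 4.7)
The plan is to recognize \eqref{MHDisoInt} as a hidden similarity transformation of $V_n$. Introduce the two matrix-valued polynomials
$$
X = I + \tfrac{h}{2} M(\tilde V), \qquad Y = I - \tfrac{h}{2} M(\tilde V)
$$
in the single matrix $M(\tilde V)$, so that they commute and, for $h$ small enough, both are invertible. The first line of \eqref{MHDisoInt} reads literally $V_n = X \tilde V Y$. A short calculation expanding both products gives $Y \tilde V X - X \tilde V Y = h\,[\tilde V, M(\tilde V)]$, which means that the second line of the scheme is equivalent to the compact identity $V_{n+1} = Y \tilde V X$. Consistency with \eqref{isoMHD} is then immediate: Taylor expansion in $h$ yields $V_{n+1} - V_n = h[V_n, M(V_n)] + O(h^2)$.

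From here the similarity transformation is extracted by solving $\tilde V = X^{-1} V_n Y^{-1}$ and substituting into the rewritten update:
$$
V_{n+1} = Y X^{-1} V_n Y^{-1} X.
$$
Because $X$ and $Y$ are both polynomials in the \emph{same} matrix $M(\tilde V)$, they commute, and so do their inverses; hence $Y^{-1} X = X Y^{-1}$, and
$$
V_{n+1} = (Y X^{-1})\, V_n\, (Y X^{-1})^{-1}.
$$
Thus $V_n$ and $V_{n+1}$ are similar matrices, which is precisely the statement of isospectrality. Taking the trace of the $k$-th power of both sides immediately yields $\mathrm{tr}(V_n^k) = \mathrm{tr}(V_{n+1}^k)$ for every $k \geq 1$.

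To close the argument, I would also verify that the scheme remains on $\tilde{\mathfrak{g}}$. This is where Lemma \ref{lemmaMHDiso} enters: since $M(\tilde V) \in \tilde{\mathfrak{g}}$ whenever $\tilde V \in \tilde{\mathfrak{g}}$, the $\tilde J$-quadratic relation $M(\tilde V)^{\dag} \tilde J = -\tilde J M(\tilde V)$ gives, by a direct expansion, $Y^{\dag} \tilde J Y = X^{\dag} \tilde J X$. This is equivalent to $(Y X^{-1})^{\dag} \tilde J (Y X^{-1}) = \tilde J$, so the conjugating element $P = Y X^{-1}$ lies in the $\tilde J$-orthogonal group associated with $\tilde{\mathfrak{g}}$, and conjugation by $P$ preserves $\tilde{\mathfrak{g}}$. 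The same identities also show that $X \tilde V Y \in \tilde{\mathfrak{g}}$ whenever $\tilde V \in \tilde{\mathfrak{g}}$, so a standard Banach contraction argument restricted to $\tilde{\mathfrak{g}}$ yields existence and uniqueness of the implicit midpoint $\tilde V$ for sufficiently small $h$.

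The only real obstacle is bookkeeping: one must spot the correct rewrite of the second line as $V_{n+1} = Y \tilde V X$, use that $X$ and $Y$ are polynomials in the same matrix (so that the two-sided multiplication telescopes into a one-sided conjugation), and then invoke the $\tilde J$-quadratic structure only to keep the iteration inside $\tilde{\mathfrak{g}}$. Once these observations are made, both isospectrality and Casimir preservation fall out in a single line.
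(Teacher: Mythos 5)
Your argument is correct, and it is genuinely more self-contained than the paper's: the paper proves Theorem~\ref{theorem2} in one line by invoking Lemma~\ref{lemmaMHDiso} together with Theorem~1 of \cite{ModViv}, whereas you unfold the mechanism behind that cited theorem. Your key identities check out: with $X=I+\tfrac{h}{2}M(\tilde V)$ and $Y=I-\tfrac{h}{2}M(\tilde V)$ one indeed has $Y\tilde VX-X\tilde VY=h[\tilde V,M(\tilde V)]$, so the update is $V_{n+1}=Y\tilde VX$ while $V_n=X\tilde VY$, and commutativity of $X$ and $Y$ (polynomials in the same matrix) gives $V_{n+1}=(YX^{-1})V_n(YX^{-1})^{-1}$, from which \eqref{casV} is immediate. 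A point your write-up makes explicit, and which the paper's citation-style proof obscures, is that the bare isospectrality claim of Theorem~\ref{theorem2} needs \emph{no} structural hypothesis on $\tilde{\mathfrak{g}}$ at all --- similarity of $V_n$ and $V_{n+1}$ holds for arbitrary matrices --- and that Lemma~\ref{lemmaMHDiso} is only needed to keep the iteration on $\tilde{\mathfrak{g}}$ (via $X^{\dag}\tilde J=\tilde JY$, $Y^{\dag}\tilde J=\tilde JX$, hence $(YX^{-1})^{\dag}\tilde J(YX^{-1})=\tilde J$), together with the observation that the block-lower-triangular matrices with equal diagonal blocks form an associative algebra so that the block shape is preserved as well. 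The trade-off is the usual one: the paper's proof is shorter and leans on established machinery that also delivers the symplecticity properties discussed afterwards in Remark~\ref{rem1}, while yours isolates exactly which hypothesis buys which conclusion and would survive even outside the $J$-quadratic setting for the spectral claim alone.
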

\begin{proof}
A direct consequence of Lemma \ref{lemmaMHDiso} and Theorem 1 in \cite{ModViv}.
\end{proof}

However, this result is not enough, since $\mathfrak{g}\ltimes\mathfrak{g}^*$ is a proper subalgebra of the larger $\tilde J$-quadratic algebra $\tilde{\mathfrak{g}}$.
Indeed, there is no guarantee that $V$ remains of the lower triangular block form \eqref{semdirrepr} as the general form of an element in $\tilde{\mathfrak{g}}$ is
\begin{equation*}
V=\begin{pmatrix}
\Theta & B\\
W & \Theta
\end{pmatrix},
\end{equation*}
where $W,\Theta,B\in\mathfrak{g}$.

\begin{remark}
\label{rem1}
Assuming that $V$ remains of the lower triangular block form \eqref{semdirrepr},
Theorem \ref{theorem2} explains preservation of the spectrum of $\Theta$, as it follows directly from the formula \eqref{casV} since $\operatorname{tr}(V^k) = 2\operatorname{tr}(\Theta^k)$ in this case. 
However, we cannot obtain preservation of the cross-helicity Casimir from \eqref{casV}, again because $\tilde{\mathfrak{g}}$ is a larger Lie algebra than $\mathfrak{g}\ltimes\mathfrak{g}^*$.
Moreover, the Lie algebra $\mathfrak{f} = \mathfrak{g}\ltimes\mathfrak{g}^*$ is not \textit{reductive}, which means that the condition
\begin{equation*}
[\mathfrak{f}^{\dagger},\mathfrak{f}]\subseteq\mathfrak{f}
\end{equation*}
does not hold. 
Consequently, whether the flow preserves the Lie--Poisson structure cannot be addressed with the method developed in \cite{ModViv}, since that method requires a reductive Lie algebra.
But, as we shall see below, the method still preserves all the geometric properties:
\begin{itemize}
\item the scheme \eqref{MHDisoInt} preserves the cross-helicity Casimir;
\item the scheme \eqref{MHDisoInt} is a Lie--Poisson integrator on the dual $\mathfrak{f}^{*}$ of the Lie algebra $\mathfrak{f}=\mathfrak{su}(N)\ltimes\mathfrak{su}(N)^{*}$.
\end{itemize}
Thus, the condition that the Lie algebra be reductive is sufficient, but not necessary, for the isospectral Runge-Kutta integrators developed in \cite{ModViv} to yield a Lie--Poisson integrator.
\end{remark}

The numerical scheme \eqref{MHDisoInt} results in an integrator written for matrices $(W,\Theta)$ in \eqref{qEu} as follows:
\begin{equation}
\label{IsoMHDWtheta}
\begin{aligned}
&\Theta_{n}=\tilde\Theta-\frac{h}{2}[\tilde\Theta,\tilde{M_{1}}]-\frac{h^{2}}{4}\tilde{M_{1}}\tilde\Theta\tilde{M_{1}}, \\
&\Theta_{n+1}=\Theta_{n}+h[\tilde\Theta,\tilde{M_{1}}],\\
&W_{n}=\tilde W-\frac{h}{2}[\tilde W,\tilde{M_{1}}]-\frac{h}{2}[\tilde\Theta,\tilde M_{2}]-\frac{h^{2}}{4}\left(\tilde{M_{1}}\tilde W\tilde{M_{1}}+\tilde M_{2}\tilde\Theta\tilde M_{1}+\tilde M_{1}\tilde\Theta\tilde M_{2}\right), \\
&W_{n+1}=W_{n}+h[\tilde W,\tilde{M_{1}}]+h[\tilde\Theta,\tilde{M_{2}}],
\end{aligned}
\end{equation}
where $\tilde{M_{1}}=\Delta_{N}^{-1}(\tilde W)$ and $\tilde{M_{2}}=\Delta_{N}(\tilde\Theta)$.

In the forthcoming sections we present an alternative derivation of the scheme \eqref{IsoMHDWtheta}, directly using reduction theory for semi-direct products. 
This approach explains the properties of the method \eqref{IsoMHDWtheta} listed in Remark~\ref{rem1}.

\subsection{Reduction theory for semi-direct products}

The strategy of deriving the structure preserving numerical scheme for \eqref{qEu} is based on the following observation. 
The flow \eqref{qEu} on the dual of $\mathfrak{f}=\mathfrak{su}(N)\ltimes\mathfrak{su}(N)^{*}$ can be seen as a Poisson reduction of a Hamiltonian system on $T^{*}F$ with a right-invariant Hamiltonian.
The reduction emerges from the momentum map $\mu\colon T^{*}F\to\mathfrak{f}^{*}$. 
The situation reflects the fact that the continuous equations \eqref{MHD} are a Poisson reduced Hamiltonian flow on the continuous counterpart of the cotangent bundle $T^{*}F$, as was discussed above. 
The momentum map has the property that it is a Poisson map between $T^{*}F$ and $\mathfrak{f}^{*}$. Therefore, having a discrete symplectic flow $\Phi_{h}\colon T^{*}F\to T^{*}F$ that is equivariant with respect to the lifted right action of $F$ on $T^{*}F$, one gets a Poisson integrator $\phi_{h}\colon\mathfrak{f}^{*}\to\mathfrak{f}^{*}$ by applying the momentum map. 

We need therefore to \textit{reconstruct} the canonical system on $T^{*}F$ from the system \eqref{qEu}, apply a symplectic integrator that keeps the flow on $T^{*}F$, check that it is also equivariant, and finally reduce the method back to $\mathfrak{f}^{*}$. 
To do so, one needs the momentum map $\mu$.

First, the cotangent bundle $T^{*}F$ of the magnetic extension $F=\mathrm{SU}(N)\ltimes\mathfrak{su}(N)^{*}$ is
\begin{equation*}
T^{*}F=\left\{(Q,m,P,\alpha)\mid Q\in\mathrm{SU}(N),P\in T^{*}_{Q}(\mathrm{SU}(N)),m\in\mathfrak{su}(N)^{*},\alpha\in\mathfrak{su}(N)\right\}.
\end{equation*}

The lifted left action of the group $F=\mathrm{SU}(N)\ltimes\mathfrak{su}(N)^{*}$ on its cotangent bundle $T^{*}F$ is
\begin{equation}
\label{leftliftact}
(G,u)\cdot(Q,m,P,\alpha)=(GQ,\mathrm{Ad}^{*}_{Q}u+m, (G^{-1})^{\dagger}P,\alpha)
\end{equation}
for $(G,u)\in F$. 
Now, the momentum map associated to the lifted left action \eqref{leftliftact} is given by \cite{MarsRatWein,MarsRat}
\begin{equation}
\label{mommap}
\mu(Q,m,P,\alpha)=\left(\frac{PQ^{\dagger}-QP^{\dagger}}{2},\,Q\alpha Q^{\dagger}\right)=(W^{\dagger},\Theta).
\end{equation}
\begin{proposition}
The canonical equations on $T^{*}F$
\begin{equation}
\label{hamsys}
\left\{
\begin{aligned}
&\dot Q=-M_{1}Q, \\
&\dot P=M_{1}^{\dagger}P+2M_{2}^{\dagger}Q\alpha^{\dagger},\\
&\dot\alpha=0,
\end{aligned}
\right.
\end{equation}
for the right-invariant Hamiltonian $\tilde{H}=H\circ\mu$ defined by
\begin{equation*}
M_{1}=\Delta^{-1}_{N}W,\quad M_{2}=\Delta_{N}\Theta,\quad H(W,\Theta)=\frac{1}{2}\left(\mathrm{tr}(W^{\dagger}M_{1})+\mathrm{tr}(\Theta^{\dagger}M_{2})\right),
\end{equation*}
are reduced to the Lie--Poisson system on $\mathfrak{f}^{*}$
\begin{equation}
\label{redMHD}
\dot W=[W,M_{1}]+[\Theta,M_{2}], \quad \dot\Theta=[\Theta,M_{1}],
\end{equation}
by means of the momentum map \eqref{mommap}.
\end{proposition}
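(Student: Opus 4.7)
My plan is to prove the proposition by direct computation in two stages: first verify that the canonical Hamiltonian flow on $T^{*}F$ generated by $\tilde H = H\circ\mu$ coincides with \eqref{hamsys}, and then differentiate the momentum map along that flow to recover \eqref{redMHD}.

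For the first stage, I would compute $d\tilde H$ via the chain rule through $\mu$. Since $\mu$ is independent of $m$, so is $\tilde H$, which forces $\dot\alpha = -\partial\tilde H/\partial m = 0$. Exploiting the self-adjointness of $\Delta_{N}$ under the Frobenius pairing, the variational derivatives of $H$ are $\delta H/\delta W = M_{1}$ and $\delta H/\delta\Theta = M_{2}$. Expanding
\begin{equation*}
\delta W^{\dag} = \tfrac12(\delta P\,Q^{\dag} + P\,\delta Q^{\dag} - \delta Q\,P^{\dag} - Q\,\delta P^{\dag}),\qquad \delta\Theta = \delta Q\,\alpha Q^{\dag} + Q\alpha\,\delta Q^{\dag} + Q\,\delta\alpha\,Q^{\dag},
\end{equation*}
and pairing with $M_{1}$ and $M_{2}$ under the trace, the coefficients of $\delta Q$ and $\delta P$ in $d\tilde H$ should match the canonical symplectic pairing on $T^{*}F$ to give exactly $\dot Q = -M_{1}Q$ and $\dot P = M_{1}^{\dag}P + 2M_{2}^{\dag}Q\alpha^{\dag}$, i.e., system \eqref{hamsys}.

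For the second stage, I substitute these equations into the time derivatives of $\Theta = Q\alpha Q^{\dag}$ and $W = (QP^{\dag}-PQ^{\dag})/2$. Using $\dot\alpha=0$ together with the skew-Hermiticity $M_{1}^{\dag} = -M_{1}$, the derivative of $\Theta$ collapses to $\dot\Theta = -M_{1}\Theta + \Theta M_{1} = [\Theta, M_{1}]$ directly. The computation for $W$ is the main bookkeeping obstacle: after inserting $\dot Q$ and $\dot P$ from \eqref{hamsys}, the $M_{1}$-dependent terms reorganize through $PQ^{\dag} - QP^{\dag} = -2W$ into $[W, M_{1}]$, while the two cross-terms involving $M_{2}$ combine, using $Q\alpha^{\dag}Q^{\dag} = -\Theta$ (since $\alpha^{\dag}=-\alpha$), into $\Theta M_{2} - M_{2}\Theta = [\Theta, M_{2}]$. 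Adding the contributions yields $\dot W = [W, M_{1}] + [\Theta, M_{2}]$, which together with the identity for $\Theta$ is precisely \eqref{redMHD}.

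Conceptually, this is an instance of Marsden--Ratiu--Weinstein semidirect-product Poisson reduction: the momentum map $\mu$ is automatically a Poisson map from $T^{*}F$ (with its canonical structure) onto $\mathfrak{f}^{*}$ (with its Lie--Poisson structure), and since $\tilde H$ factors through $\mu$ it is $F$-right invariant, so the flow descends to a Lie--Poisson flow with Hamiltonian $H$. The explicit calculation above, however, is what matches the concrete form of the equations claimed in \eqref{redMHD} and will be needed later for the construction of the integrator.
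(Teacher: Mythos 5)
Your proposal is correct, and its second stage --- differentiating $\Theta=Q\alpha Q^{\dag}$ and $W=\tfrac12(QP^{\dag}-PQ^{\dag})$ along the flow and using $M_{1}^{\dag}=-M_{1}$, $M_{2}^{\dag}=-M_{2}$, $\alpha^{\dag}=-\alpha$ to collapse the result into $[W,M_{1}]+[\Theta,M_{2}]$ and $[\Theta,M_{1}]$ --- is exactly the computation the paper gives as its proof. Your first stage, checking that the stated system really is the canonical Hamiltonian flow of $\tilde H=H\circ\mu$ (including the factor of $2$ in the $M_{2}^{\dag}Q\alpha^{\dag}$ term coming from the two $\delta Q$-contributions in $\delta\Theta$), is an extra verification the paper omits; it goes through and is a harmless, indeed clarifying, addition.
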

\begin{proof}
First, we observe that since $W\in\mathfrak{su}(N)$, $\Theta\in\mathfrak{su}(N)$, and $\Delta_{N}\colon\mathfrak{su}(N)\to\mathfrak{su}(N)$ by construction \eqref{mommap}, we have that $M_{1},M_{2}\in\mathfrak{su}(N)$. Using \eqref{mommap} we get
\begin{equation*}
\dot W=\frac{1}{2}\left(\dot QP^{\dagger}+Q\dot P^{\dagger}-\dot P Q^{\dagger}-P\dot Q^{\dagger}\right).
\end{equation*}
Using \eqref{hamsys} and rearranging the terms we get
\begin{equation*}
\dot W=\frac{1}{2}\left([QP^{\dagger},M_{1}]+[PQ^{\dagger},M_{1}^{\dagger}]\right)+[Q\alpha Q^{\dagger},M_{2}].
\end{equation*}
Since $M_{1},M_{2}\in\mathfrak{su}(N)$, we have $M_{1}^{\dagger}=-M_{1}$, $M_{2}^{\dagger}=-M_{2}$, and get the first equation in \eqref{redMHD}.
For the $\Theta$ component, we have
\begin{equation*}
\frac{d}{dt}\left(Q\alpha Q^{\dagger}\right)=\dot Q\alpha Q^{\dagger}+Q\alpha\dot Q^{\dagger}.
\end{equation*}
Again, using \eqref{hamsys}, we get
\begin{equation*}
\frac{d}{dt}\left(Q\alpha Q^{\dagger}\right)=[Q\alpha Q^{\dagger},M_{1}]\Longrightarrow \dot\Theta=[\Theta,M_{1}].
\end{equation*}
This concludes the proof.
\end{proof}

\begin{remark}
  We have not written the equation for the variable $m$ in \eqref{hamsys}, as it is not needed to get the algebra variables $(W,\Theta)$ back, see \eqref{mommap}.
\end{remark}

\begin{remark}
  Since, according to \eqref{hamsys}, the matrix $\alpha$ is constant, the equations \eqref{hamsys} can be seen as a Hamiltonian flow on $T^{*}\mathrm{SU}(N)$ with the matrix $\alpha$ as a parameter defining an initial condition for the matrix $\Theta\in\mathfrak{su}(N)$. 
  Thus, despite the incompressible MHD equations have twice as many unknowns as in the incompressible Euler equations, the Hamiltonian left-reconstructed flow still takes place on the cotangent bundle $T^{*}\mathrm{SU}(N)$, which reflects a more general observation that any Lie--Poisson system on a semi-direct product can be viewed as a Newton system with a smaller symmetry group (see \cite{KhMisMod} for details).  
\end{remark}

Recall that the space $\mathfrak{f}^{*}$ can be seen as a quotient of $T^{*}F$ with respect to the lifted right action of $F$, i.e., $\mathfrak{f}^{*}=T^{*}F/F$. 
In other words, points in $\mathfrak{f}^{*}$ are $F$-orbits of points in $T^{*}F$. Then, if points $a\in T^{*}F$ and $b\in T^{*}F$ belong to the same orbit, i.e., $b=f\cdot a$ for some $f\in F$, they correspond to the same point $\mu(a)=\mu(f(a))\in\mathfrak{f}^{*}$. 
Let $\Phi_{h}\colon T^{*}F\to T^{*}F$ be a symplectic method on $T^{*}F$. 
Then it descends to an integrator on $\mathfrak{f}^{*}$ if the points $\Phi_{h}(a)$ and $(\Phi_{h}\circ f)(a)$ belong to the same orbit, which means that $(\Phi_{h}\circ f)(a)=(f\circ \Phi_{h})(a)$. This holds for any point $a\in T^{*}F$, meaning that the method $\Phi_{h}\colon T^{*}F\to T^{*}F$ must also be \textit{equivariant}. 
The setup is illustrated in a diagram Fig.~\ref{equivfig}.

In summary, we arrive at the following result:

\begin{figure}[ht!]
\centering
\includegraphics[scale=0.6]{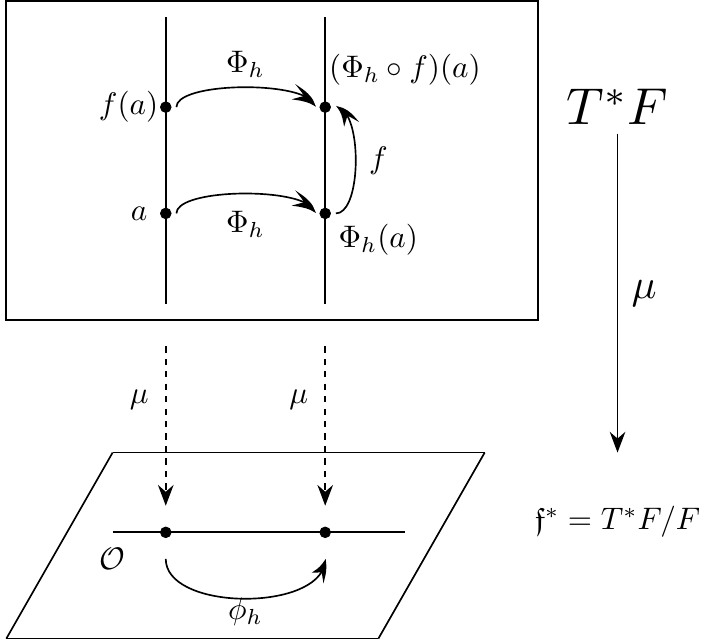}
\caption{Equivariance of a symplectic method $\Phi_{h}\colon T^{*}F\to T^{*}F$. 
The symplectic equivariant method $\Phi_{h}\colon T^{*}F\to T^{*}F$ descends to a Lie--Poisson method $\phi_{h}\colon\mathfrak{f}^{*}\to\mathfrak{f}^{*}$ on the coadjoint orbit $\mathcal{O}\subset\mathfrak{f}^{*}$.}
\label{equivfig}
\end{figure}

\begin{theorem}
\label{th1}
Consider the Lie--Poisson system \eqref{qEu} evolving on the dual of the semi-direct product Lie algebra $\mathfrak{f}^{*}=\mathfrak{su}(N)\ltimes\mathfrak{su}(N)^{*}$. Let $\Phi_{h}\colon T^{*}F\to T^{*}F$ be a symplectic numerical method applied to the Hamiltonian system \eqref{hamsys}. If it is also equivariant with respect to the right $\mathrm{SU}(N)\ltimes\mathfrak{su}(N)^{*}$ action
\begin{equation}
\label{rightAct}
(Q,P,\alpha,m)\cdot(G,u)=\left(QG,P(G^{-1})^{\dagger},\mathrm{Ad}_{G^{-1}}\alpha,\mathrm{Ad}^{*}_{G}m+u\right),
\end{equation}
then it descends to a Lie--Poisson integrator $\phi_{h}$ on $\mathfrak{f}^{*}$.
\end{theorem}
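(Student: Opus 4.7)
The plan is to invoke the standard cotangent-bundle reduction picture, relying on two classical facts: (i) the momentum map $\mu\colon T^{*}F\to\mathfrak{f}^{*}$ for the cotangent-lifted left action, given by \eqref{mommap}, is a Poisson map between the canonical symplectic structure on $T^{*}F$ and the Lie--Poisson structure on $\mathfrak{f}^{*}$; and (ii) the same $\mu$ is invariant under the cotangent-lifted right action \eqref{rightAct}, so that $\mu$ realizes the identification $\mathfrak{f}^{*}\cong T^{*}F/F$ as the quotient projection. Both statements are the semidirect-product analogues of well-known results for Lie groups, and I would verify them as a preliminary step by direct differentiation of \eqref{mommap}.

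Given (i)--(ii), the argument proceeds in two short steps. First, I would show that $\phi_{h}$ is well-defined. For any $x\in T^{*}F$ and $f\in F$, right-equivariance of $\Phi_{h}$ yields $\Phi_{h}(x\cdot f)=\Phi_{h}(x)\cdot f$, and right-invariance of $\mu$ then gives $\mu(\Phi_{h}(x\cdot f))=\mu(\Phi_{h}(x))$. Hence $\mu\circ\Phi_{h}$ is constant on $F$-orbits and, since $\mu$ is a surjective submersion, descends uniquely to a map $\phi_{h}\colon\mathfrak{f}^{*}\to\mathfrak{f}^{*}$ characterized by $\phi_{h}\circ\mu=\mu\circ\Phi_{h}$.

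Second, I would show that $\phi_{h}$ is a Poisson map. Since $\Phi_{h}$ is symplectic, it is Poisson on $T^{*}F$. For any $f,g\in C^{\infty}(\mathfrak{f}^{*})$, combining this with (i) produces the chain
\begin{align*}
\{f,g\}_{\mathfrak{f}^{*}}\circ\phi_{h}\circ\mu&=\{f,g\}_{\mathfrak{f}^{*}}\circ\mu\circ\Phi_{h}=\{f\circ\mu,g\circ\mu\}_{T^{*}F}\circ\Phi_{h}\\
&=\{f\circ\mu\circ\Phi_{h},g\circ\mu\circ\Phi_{h}\}_{T^{*}F}=\{f\circ\phi_{h},g\circ\phi_{h}\}_{\mathfrak{f}^{*}}\circ\mu.
\end{align*}
Surjectivity of $\mu$ then forces $\{f,g\}_{\mathfrak{f}^{*}}\circ\phi_{h}=\{f\circ\phi_{h},g\circ\phi_{h}\}_{\mathfrak{f}^{*}}$, which is precisely the Poisson-map property for $\phi_{h}$. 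As a consequence, $\phi_{h}$ preserves the symplectic leaves of $\mathfrak{f}^{*}$, i.e.\ the coadjoint orbits, and all Casimirs.

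The main obstacle is not this diagram chase, which is mechanical, but carefully establishing (i)--(ii) for the magnetic extension. The semidirect factor $\mathfrak{su}(N)^{*}$ enters both the base (through $m$) and the fiber (through $\alpha$) of $T^{*}F$, and the nontrivial $\mathrm{Ad}^{*}$-twisting in the group law of $F$ produces the conjugation term $Q\alpha Q^{\dag}$ in \eqref{mommap}. Once one checks by direct computation that \eqref{mommap} satisfies the defining relation $\mathrm{d}\langle\mu(\cdot),\xi\rangle=\iota_{\xi_{T^{*}F}}\omega_{\mathrm{can}}$ for every $\xi\in\mathfrak{f}$, standard reduction theory supplies the Poisson property and right-invariance, and the remainder of the argument is automatic.
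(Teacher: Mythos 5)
Your proposal is correct and follows essentially the same route as the paper: the paper likewise argues that the momentum map \eqref{mommap} is a Poisson map realizing $\mathfrak{f}^{*}\cong T^{*}F/F$, that right-invariance of $\mu$ together with right-equivariance of $\Phi_{h}$ makes the descended map well defined, and that symplecticity of $\Phi_{h}$ then yields the Lie--Poisson property of $\phi_{h}$ (the paper presents this informally in the paragraph and diagram preceding the theorem, citing standard semidirect-product reduction theory for the properties of $\mu$ that you propose to verify directly). Your write-up is simply a more explicit version of the same diagram chase.
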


\subsection{Casimir preserving scheme}

According to Theorem~\ref{th1}, we need a symplectic integrator for the Hamiltonian system \eqref{hamsys}. 
We choose the simplest one among symplectic Runge-Kutta methods, which is the implicit midpoint method. 
If we denote the right-hand sides of \eqref{hamsys} by
\begin{equation*}
\begin{aligned}
f(Q,P)&=-M_{1}Q, \\
g(Q,P,\alpha)&=M_{1}^{\dagger}P+2M_{2}^{\dagger}Q\alpha^{\dagger},
\end{aligned}
\end{equation*}
so that
\begin{equation*}
\dot Q=f(Q,P),\quad\dot P=g(Q,P,\alpha),
\end{equation*}
\new{with $\alpha$ being a constant matrix.}

Then the method $\Phi_{h}\colon (Q_{n},P_{n})\mapsto (Q_{n+1},P_{n+1})$ is
\begin{equation}
\label{implMP}
\begin{cases}
\displaystyle Q_{n}=\tilde{Q}-\frac{h}{2}f(\tilde{P},\tilde{Q}),
\quad Q_{n+1}=\tilde{Q}+\frac{h}{2}f(\tilde{P},\tilde{Q})
   \\
   \\
\displaystyle P_{n}=\tilde{P}-\frac{h}{2}g(\tilde{P},\tilde{Q},\alpha),
\quad P_{n+1}=\tilde{P}+\frac{h}{2}g(\tilde{P},\tilde{Q},\alpha).
 \end{cases}
\end{equation}
The implicit midpoint method is known to be symplectic, the only thing we need to prove is that it is also equivariant with respect to action \eqref{rightAct}.
\begin{lemma}
\label{lemm1}
The implicit midpoint method $\Phi_{h}\colon(Q_{n},P_{n})\mapsto(Q_{n+1},P_{n+1})$ defined by \eqref{implMP} is equivariant with respect to action \eqref{rightAct}, i.e.,
\begin{equation*}
\Phi_{h}\circ(G,u)=(G,u)\circ\Phi_{h}.
\end{equation*}
\end{lemma}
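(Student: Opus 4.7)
The plan is to trace the right action \eqref{rightAct} through the midpoint scheme \eqref{implMP} and verify that each ingredient respects the equivariance, exploiting that for $G\in\mathrm{SU}(N)$ one has $G^{\dag}=G^{-1}$ and $(G^{-1})^{\dag}=G$. The pivotal observation is that the two quantities appearing in the momentum map, namely $W^{\dag}=(PQ^{\dag}-QP^{\dag})/2$ and $\Theta=Q\alpha Q^{\dag}$, are actually \emph{invariant} under the right $F$-action. Under $(Q',P',\alpha')=(QG,P(G^{-1})^{\dag},\mathrm{Ad}_{G^{-1}}\alpha)$, the unitary factors cancel symmetrically: $P'Q'^{\dag}-Q'P'^{\dag}=PGG^{-1}Q^{\dag}-QGG^{-1}P^{\dag}=PQ^{\dag}-QP^{\dag}$, and $Q'\alpha'Q'^{\dag}=QG\cdot G^{-1}\alpha G\cdot G^{-1}Q^{\dag}=Q\alpha Q^{\dag}$. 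Consequently $M_1=\Delta_N^{-1}W$ and $M_2=\Delta_N\Theta$ are invariant as well.

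The second step is to establish the \emph{equivariance of the vector fields}
\begin{equation*}
f(Q',P')=f(Q,P)\,G,\qquad g(Q',P',\alpha')=g(Q,P,\alpha)\,G.
\end{equation*}
The first is immediate from $f=-M_1 Q$ and invariance of $M_1$. For the second, $M_1^{\dag}P'=M_1^{\dag}PG$ directly, while the $\alpha$-term rearranges as $2M_2^{\dag}Q'\alpha'^{\dag}=2M_2^{\dag}QG(G^{-1}\alpha G)^{\dag}=2M_2^{\dag}QG\cdot G^{-1}\alpha^{\dag}G=2M_2^{\dag}Q\alpha^{\dag}G$, again using $G^{\dag}=G^{-1}$ to collapse the unitary factors.

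With these identities the equivariance of \eqref{implMP} is routine bookkeeping. Given stages $(\tilde Q,\tilde P)$ for the input $(Q_n,P_n,\alpha)$, I claim that $(\tilde Q\,G,\tilde P\,G)$ satisfy the corresponding stage equations for the transformed input $(Q_nG,P_nG,\mathrm{Ad}_{G^{-1}}\alpha)$: each of the four lines in \eqref{implMP} picks up a common right factor of $G$ on both sides, thanks to the vector-field identities above. By uniqueness of the implicit stages (for $h$ small enough, the standard fixed-point argument applies), these \emph{are} the stages of the transformed problem, whence $(Q_{n+1}G,P_{n+1}G)$ is the output of $\Phi_h$ applied to $(Q_nG,P_nG)$. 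The $\alpha$-component is equivariant trivially since $\dot\alpha=0$ propagates $\mathrm{Ad}_{G^{-1}}\alpha$ unchanged, and the $m$-component drops out of both the Hamiltonian and $\Phi_h$, so its equivariance under $m\mapsto\mathrm{Ad}^{*}_{G}m+u$ is automatic.

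The main obstacle is purely notational, namely the careful dagger bookkeeping and the asymmetric roles of $G$ and $G^{-1}$ in the right action on $Q$, $P$, and $\alpha$. Everything compiles precisely because $G\in\mathrm{SU}(N)$ synchronises these conventions; had the implicit midpoint been replaced by a non-affine stage equation, or the action by a non-unitary one, the same strategy would still apply but the cancellations would need to be rechecked line by line.
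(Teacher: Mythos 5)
Your proposal is correct and follows essentially the same route as the paper: both hinge on the invariance of $W=\tfrac12(QP^{\dag}-PQ^{\dag})^{\dag}$ and $\Theta=Q\alpha Q^{\dag}$ (hence of $M_1,M_2$) under the right action, the resulting transformation law $f\mapsto fG$, $g\mapsto g(G^{-1})^{\dag}$, and then a transfer of equivariance through the implicit stage equation — the paper does this by showing the explicit inverse map $\Psi=(\Phi_h^{(1)})^{-1}$ is equivariant, while you invoke uniqueness of the implicit stages, which is the same argument in different words.
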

\begin{proof}
First, we can write $\Phi_{h}=\Phi_{h}^{(2)}\circ\Phi_{h}^{(1)}$, where
\begin{equation*}
\begin{aligned}
\Phi_{h}^{(1)}&\colon(Q_{n},P_{n})\mapsto(\tilde Q,\tilde P),\\
\Phi_{h}^{(2)}&\colon(\tilde Q,\tilde P)\mapsto(Q_{n+1},P_{n+1}).
\end{aligned}
\end{equation*}
Since equivariance of both $\Phi_{h}^{(1)}$ and $\Phi_{h}^{(2)}$ implies that their composition $\Phi_{h}$ is also equivariant, it is enough to prove equivariance of $\Phi_{h}^{(1)}$ and $\Phi_{h}^{(2)}$ individually. 
Further, as we have an explicit formula for $\Psi=\left(\Phi_{h}^{(1)}\right)^{-1}$, and also that equivariance of $\Psi$ implies equivariance of $\Phi_{h}^{(1)}$, we will prove equivariance of $\Psi$ with respect to $F=\mathrm{SU}(N)\ltimes\mathfrak{su}(N)^{*}$ action: $\Psi\circ F=F\circ\Psi$.
\begin{equation*}
(\Psi\circ F)(\tilde Q,\tilde P)=(Q_{n},P_{n}),
\end{equation*}
where
\begin{equation*}
Q_{n}=\tilde QG-\frac{h}{2}f\left(\tilde QG,\tilde P(G^{\dagger})^{-1}\right)=\tilde QG+\frac{h}{2}M_{1}\left(W(\tilde QG,\tilde P(G^{-1})^{\dagger})\right)\tilde QG.
\end{equation*}
Further, using \eqref{mommap} we get
\begin{equation*}
\begin{split}
W(\tilde QG,\tilde P(G^{-1})^{\dagger})&=\frac{1}{2}\left(\tilde QGG^{-1}\tilde P^{\dagger}-\tilde P(G^{-1})^{\dagger}G^{\dagger}\tilde Q^{\dagger}\right)={}\\&=\frac{1}{2}\left(\tilde Q\tilde P^{\dagger}-\tilde P\tilde Q^{\dagger}\right)=W(\tilde Q,\tilde P).
\end{split}
\end{equation*}
Therefore, $f\left(\tilde QG,\tilde P(G^{\dagger})^{-1}\right)=-M_{1}(W(\tilde Q,\tilde P))\tilde QG$, and finally
\begin{equation*}
Q_{n}=\left(\tilde Q-\frac{h}{2}f(\tilde Q,\tilde P)\right)G.
\end{equation*}
For $P_{n}$, we get
\begin{equation*}
P_{n}=\tilde P(G^{\dagger})^{-1}-\frac{h}{2}g\left(\tilde QG,\tilde P(G^{\dagger})^{-1},G^{-1}\alpha G\right),
\end{equation*}
with 
\begin{equation*}
g\left(\tilde QG,\tilde P(G^{\dagger})^{-1},G^{-1}\alpha G\right)=M_{1}^{\dagger}\tilde P(G^{\dagger})^{-1}+2M_{2}^{\dagger}\tilde QG(G^{-1}\alpha G)^{\dagger},
\end{equation*}
where $M_{1}=M_{1}\left(W(\tilde QG,\tilde P(G^{-1})^{\dagger}\right)$, and $M_{2}=M_{2}\left(\Theta(\tilde QG,G^{-1}\alpha G)\right)$.

We have already shown that $W\left(\tilde QG,\tilde P(G^{-1})^{\dagger}\right)=W\left(\tilde Q,\tilde P\right)$. Now,
\begin{equation*}
\Theta(\tilde QG,G^{-1}\alpha G)=\tilde QG(G^{-1}\alpha G)G^{-1}\tilde Q^{\dagger}=\tilde Q\alpha\tilde Q^{\dagger}=\Theta(\tilde Q,\alpha).
\end{equation*}
Therefore, we also have that $M_{2}(\Theta(\tilde QG,G^{-1}\alpha G))=M_{2}(\Theta(\tilde Q,\alpha))$. Further,
\begin{equation*}
\begin{split}
g\left(\tilde QG,\tilde P(G^{\dagger})^{-1},G^{-1}\alpha G\right)&=(M_{1}^{\dagger}\tilde P+2M_{2}^{\dagger}\tilde Q\alpha^{\dagger})(G^{-1})^{\dagger}={}\\&=g(\tilde Q,\tilde P,\alpha)(G^{-1})^{\dagger}.
\end{split}
\end{equation*}
Thus, for $P_{n}$ we get
\begin{equation*}
P_{n}=\left(\tilde P-\frac{h}{2}g(\tilde Q,\tilde P,\alpha)\right)(G^{-1})^{\dagger},
\end{equation*}
and finally
\begin{equation}
\label{psiF}
(\Psi\circ F)(\tilde Q,\tilde P)=\left(\left(\tilde Q-\frac{h}{2}f(\tilde Q,\tilde P)\right)G,\left(\tilde P-\frac{h}{2}g(\tilde Q,\tilde P,\alpha)\right)(G^{-1})^{\dagger}\right).
\end{equation}
On the other hand,
\begin{equation}
\label{Fpsi}
\begin{split}
(F\circ\Psi)(\tilde Q,\tilde P)&=F\left(\tilde Q-\frac{h}{2}f(\tilde Q,\tilde P),\tilde P-\frac{h}{2}g(\tilde Q,\tilde P,\alpha)\right)={}\\&=\left(\left(\tilde Q-\frac{h}{2}f(\tilde Q,\tilde P)\right)G,\left(\tilde P-\frac{h}{2}g(\tilde Q,\tilde P,\alpha)\right)(G^{-1})^{\dagger}\right).
\end{split}
\end{equation}
Comparing \eqref{psiF} and \eqref{Fpsi} we conclude that $\Psi=\left(\Phi_{h}^{(1)}\right)^{-1}$ is equivariant, and therefore $\Phi_{h}^{(1)}$ is so as well.

Since maps $\left(\Phi_{h}^{(1)}\right)^{-1}$ and $\Phi_{h}^{(2)}$ differ by a sign in front of $h/2$ (see \eqref{implMP}), the proof of equivariance for $\Phi_{h}^{(2)}$ is exactly the same as for $\left(\Phi_{h}^{(1)}\right)^{-1}$, and $\Phi_{h}=\Phi_{h}^{(2)}\circ\Phi_{h}^{(1)}$ is equivariant.
This concludes the proof.
\end{proof}

We can see in equation \eqref{implMP} that the implicit midpoint method is not formulated intrinsically on $T^{*}\mathrm{SU}(N)$. 
Namely, the matrix $\tilde Q$ does not necessarily belong to $\mathrm{SU}(N)$. 
However, the flow of matrices $(W,\Theta)$ still remains on $\mathfrak{su}(N)\ltimes\mathfrak{su}(N)^{*}$ due to \eqref{mommap}. 
Moreover, the proof of equivariance does not use that $\tilde Q\in\mathrm{SU}(N)$. 

Finally, we arrive at the following result.
\begin{theorem}
The implicit midpoint method \eqref{implMP} for the Hamiltonian system \eqref{hamsys} descends to a Lie--Poisson integrator 
$$
\phi_{h}\colon\mathfrak{f}^{*}\to\mathfrak{f}^{*}, \quad (W_{n},\Theta_{n})\mapsto(W_{n+1},\Theta_{n+1})
$$
for the Lie--Poisson flow \eqref{qEu}. 
The method is defined by the following equations:
\begin{equation}
\label{MHDmethod1}
\begin{aligned}
&\Theta_{n}=\tilde\Theta-\frac{h}{2}[\tilde\Theta,\tilde{M_{1}}]-\frac{h^{2}}{4}\tilde{M_{1}}\tilde\Theta\tilde{M_{1}}, \\
&\Theta_{n+1}=\Theta_{n}+h[\tilde\Theta,\tilde{M_{1}}],\\
&W_{n}=\tilde W-\frac{h}{2}[\tilde W,\tilde{M_{1}}]-\frac{h}{2}[\tilde\Theta,\tilde M_{2}]-\frac{h^{2}}{4}\left(\tilde{M_{1}}\tilde W\tilde{M_{1}}+\tilde M_{2}\tilde\Theta\tilde M_{1}+\tilde M_{1}\tilde\Theta\tilde M_{2}\right), \\
&W_{n+1}=W_{n}+h[\tilde W,\tilde{M_{1}}]+h[\tilde\Theta,\tilde{M_{2}}],
\end{aligned}
\end{equation}
where $\tilde{M_{1}}=\Delta_{N}^{-1}(\tilde W)$, $\tilde{M_{2}}=\Delta_{N}(\tilde\Theta)$.
Furthermore, this integrator preserves the Casimirs \eqref{qCas}:
\begin{equation*}
\begin{aligned}
\mathrm{tr}(f(\Theta_{n}))&=\mathrm{tr}(f(\Theta_{n+1})),\\
\mathrm{tr}(W_{n}g(\Theta_{n}))&=\mathrm{tr}(W_{n+1}g(\Theta_{n+1})).
\end{aligned}
\end{equation*}
\end{theorem}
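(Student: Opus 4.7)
The plan is to put together the pieces already assembled in the excerpt: Theorem \ref{th1} reduces the entire statement to showing (i) the implicit midpoint method \eqref{implMP} is symplectic, (ii) it is equivariant under the right action \eqref{rightAct}, and then (iii) carrying out an explicit computation of the momentum map \eqref{mommap} evaluated at the midpoint iterates produces \eqref{MHDmethod1}; Casimir preservation then follows for free from the Lie--Poisson property.

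First I would invoke the classical fact that the implicit midpoint rule on a symplectic vector space is symplectic (it satisfies $b_i a_{ij}+b_j a_{ji}=b_i b_j$ with $s=1$, $b_1=1$, $a_{11}=1/2$). Since the phase space $T^{*}F$ is linear (more precisely, $T^{*}\mathrm{SU}(N)$ is trivialized as a matrix space and $\alpha$ is a parameter), the general symplectic-RK result applies directly to \eqref{hamsys}. Combined with Lemma \ref{lemm1}, Theorem \ref{th1} then gives that the induced map $\phi_h\colon\mathfrak{f}^{*}\to\mathfrak{f}^{*}$ is a Lie--Poisson integrator for \eqref{qEu}. No further conceptual input is needed here.

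The main computational step, and I expect the main obstacle, is deriving the closed-form expressions \eqref{MHDmethod1}. I would proceed by applying the momentum map \eqref{mommap} to the midpoint stages $(\tilde Q,\tilde P)$ and the outputs $(Q_n,P_n),(Q_{n+1},P_{n+1})$. Setting $\tilde W = \tfrac12(\tilde P\tilde Q^\dag - \tilde Q\tilde P^\dag)^\dag$ and $\tilde\Theta=\tilde Q\alpha\tilde Q^\dag$, one substitutes $f(\tilde Q,\tilde P)=-\tilde M_1\tilde Q$ and $g(\tilde Q,\tilde P,\alpha)=\tilde M_1^\dag\tilde P+2\tilde M_2^\dag\tilde Q\alpha^\dag$, uses $\tilde M_1,\tilde M_2\in\mathfrak{su}(N)$ (so $\tilde M_i^\dag=-\tilde M_i$), and expands
\begin{equation*}
\Theta_{n}=\left(\tilde Q-\tfrac{h}{2}f\right)\alpha\left(\tilde Q-\tfrac{h}{2}f\right)^{\dag},\qquad W_{n}^{\dag}=\tfrac{1}{2}\left(\left(\tilde P-\tfrac{h}{2}g\right)\left(\tilde Q-\tfrac{h}{2}f\right)^{\dag}-\left(\tilde Q-\tfrac{h}{2}f\right)\left(\tilde P-\tfrac{h}{2}g\right)^{\dag}\right),
\end{equation*}
and similarly with $+h/2$ for $(W_{n+1},\Theta_{n+1})$. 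Collecting terms by order in $h$ and exploiting the skew-Hermiticity of $\tilde M_1,\tilde M_2$, the $h^1$ terms assemble into the commutators $[\tilde\Theta,\tilde M_1]$ and $[\tilde W,\tilde M_1]+[\tilde\Theta,\tilde M_2]$, while the $h^2$ terms collapse to exactly the triple products appearing in \eqref{MHDmethod1}. Subtracting the $(\cdot)_n$ equation from the $(\cdot)_{n+1}$ equation kills the $h^2$ terms and yields the update lines, giving the stated formulas. This step is elementary but book-keeping-heavy; the key structural input that makes everything close is $M_i^\dag=-M_i$.

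Finally, Casimir preservation is immediate. A Lie--Poisson integrator, by definition, maps each coadjoint orbit of $\mathfrak{f}^{*}$ into itself, and all Casimirs \eqref{qCas} are constant on coadjoint orbits; hence $\operatorname{tr}(f(\Theta_{n+1}))=\operatorname{tr}(f(\Theta_n))$ and $\operatorname{tr}(W_{n+1}g(\Theta_{n+1}))=\operatorname{tr}(W_n g(\Theta_n))$ for every smooth $f,g$. This also explains, a posteriori, the observation in Remark \ref{rem1} that \eqref{IsoMHDWtheta}---which coincides with \eqref{MHDmethod1}---preserves cross-helicity even though $\tilde{\mathfrak{g}}$ is not reductive.
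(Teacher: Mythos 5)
Your proposal is correct and takes essentially the same route as the paper: the paper likewise obtains \eqref{MHDmethod1} by pushing the midpoint iterates $(Q_n,P_n)$, $(\tilde Q,\tilde P)$, $(Q_{n+1},P_{n+1})$ through the momentum map \eqref{mommap} and expanding using $\tilde M_i^{\dag}=-\tilde M_i$, and it deduces the Lie--Poisson property and Casimir preservation directly from Theorem \ref{th1} combined with Lemma \ref{lemm1}. Your write-up merely makes explicit the bookkeeping and the orbit-preservation argument that the paper leaves as ``straightforward.''
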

\begin{proof}
The formulae \eqref{MHDmethod1} are obtained straightforwardly by means of
\begin{equation*}
\begin{aligned}
W_{n}&=\frac{1}{2}\left(Q_{n}P_{n}^{\dagger}-P_{n}Q_{n}^{\dagger}\right),\\
\Theta_{n}&=Q_{n}\alpha Q_{n}^{\dagger},\\
W_{n+1}&=\frac{1}{2}\left(Q_{n+1}P_{n+1}^{\dagger}-P_{n+1}Q_{n+1}^{\dagger}\right),\\
\Theta_{n+1}&=Q_{n+1}\alpha Q_{n+1}^{\dagger} .
\end{aligned}
\end{equation*}
Preservation of Casimirs is a direct consequence of Theorem \ref{th1} and Lemma \ref{lemm1}.
\end{proof}

\begin{remark}
The scheme \eqref{MHDmethod1} has order of consistency $O(h^{2})$, the same as the underlying symplectic Runge-Kutta method \eqref{implMP}. 
It is straightforward to generalize to an integrator of arbitrary order $O(h^{s})$, once we apply symplectic $s$-stage Runge-Kutta scheme on $T^{*}F$.
\end{remark}

\subsection{Algebras other than $\mathfrak{su}(N)$}

Here, we show that the above formalism allows us to develop structure preserving integrators also for other Lie algebras defined by different constraints. We start with $J$-quadratic Lie algebras.
\subsubsection{$J$-quadratic Lie algebras}
Let $\mathfrak{g}$ be $J$-quadratic, that is \begin{equation}
\label{Jquadr}
A^{\dagger}J+JA=0
\end{equation}for any $A\in\mathfrak{g}$ and $J$ being $J^{2}=cI$, with $c\in\mathbb{R}\setminus\left\{0\right\}$, and $J=\pm J^{\dagger}$. This setting covers most of the classical Lie algebras, such that $\mathfrak{su}(N)$, $\mathfrak{u}(N)$, $\mathfrak{so}(N)$, $\mathfrak{sp}(N,\mathbb{C})$, $\mathfrak{sp}(N,\mathbb{R})$. 

The relation \eqref{Jquadr} implies the following quadratic constraint on the group $\mathrm{GL}(N,\mathbb{C})$ defining the corresponding matrix Lie group $G$:
\begin{equation*}
Q\in G\Longleftrightarrow Q^{\dagger}JQ=J.
\end{equation*}
Therefore, the momentum map $\mu\colon T^{*}F\to\mathfrak{f}^{*}$ is
\begin{equation}
\label{mommapJquadr}
\begin{split}
\mu(Q,m,P,\alpha)&=\left(\Pi(QP^{\dagger}),\,Q\alpha Q^{-1}\right)={}\\&=\left(\frac{1}{2}\left(QP^{\dagger}-\frac{1}{c}JPQ^{\dagger}J\right),\,\frac{1}{c}Q\alpha JQ^{\dagger}J\right)=(W,\Theta),
\end{split}
\end{equation}
where $\alpha\in\mathfrak{g}$, and \begin{equation*}\Pi\colon\mathfrak{gl}(N,\mathbb{C})\to\mathfrak{g},\quad \Pi(W)=\frac{1}{2}\left(W-\frac{1}{c}JW^{\dagger}J\right),\quad W\in\mathfrak{gl}(N,\mathbb{C}).
\end{equation*}
is a projector onto the Lie algebra $\mathfrak{g}$. 

The Hamilton's equations on $T^{*}F$ are
\begin{equation}
\label{hamsys1}
\left\{
\begin{aligned}
&\dot Q=-M_{1}Q, \\
&\dot P=M_{1}^{\dagger}P+\frac{2}{c}JM_{2}Q\alpha J,\\
&\dot\alpha=0,
\end{aligned}
\right.
\end{equation}
Applying the implicit midpoint method to \eqref{hamsys1} and using \eqref{mommapJquadr}, we get
\begin{align*}
&\Theta_{n}=\frac{1}{c}Q_{n}\alpha JQ_{n}^{\dagger}J=\frac{1}{c}\left(\tilde Q+\frac{h}{2}\tilde{M_{1}}\tilde Q\right)\alpha J\left(\tilde Q+\frac{h}{2}\tilde{M_{1}}\tilde Q\right)^{\dagger}J={}\\& \quad\; =\left(I+\frac{h}{2}\tilde{M_{1}}\right)\tilde\Theta\left(I-\frac{h}{2}\tilde{M_{1}}\right),{}\\
&\Theta_{n+1}=\Theta_{n}+h[\tilde\Theta,\tilde{M_{1}}], \\
&W_{n}=\tilde W-\frac{h}{2}[\tilde W,\tilde{M_{1}}]-\frac{h}{2}[\tilde\Theta,\tilde M_{2}]-\frac{h^{2}}{4}\left(\tilde{M_{1}}\tilde W\tilde{M_{1}}+\tilde M_{2}\tilde\Theta\tilde M_{1}+\tilde M_{1}\tilde\Theta\tilde M_{2}\right),\\
&W_{n+1}=W_{n}+h[\tilde W,\tilde{M_{1}}]+h[\tilde\Theta,\tilde{M_{2}}],
\end{align*}
which coincides with \eqref{MHDmethod1}.

\begin{remark}
Since the method just derived has the same form for all $J$-quadratic Lie algebras, we have thus extended the previous setting from $\mathfrak{f}=\mathfrak{su}(N)\ltimes\mathfrak{su}(N)^{*}$ to  $\mathfrak{f}=\mathfrak{g}\ltimes\mathfrak{g}^{*}$ for an arbitrary $J$-quadratic Lie algebra $\mathfrak{g}$.
In this way, the method \eqref{MHDmethod1} represents the natural extension of \textit{isospectral symplectic Runge-Kutta methods} \cite{ModViv} for Lie--Poisson systems on $J$-quadratic Lie algebras $\mathfrak{g}$ to those on the \textit{magnetic extension} $\mathfrak{f}=\mathfrak{g}\ltimes\mathfrak{g}^{*}$ of $\mathfrak{g}$. 
We therefore call these integrators \eqref{MHDmethod1} \textit{magnetic symplectic Runge-Kutta methods}.
\end{remark}

\subsubsection{General type Lie algebras}
Here, we do not assume that $\mathfrak{g}$ is a $J$-quadratic Lie algebra, in other words the Lie group $G$ does not allow for the constraint $Q^{\dagger}JQ=J$. In this case one has to use the general formula for the momentum map $\mu\colon T^{*}F\to\mathfrak{f}^{*}$:
\begin{equation*}
\mu(Q,m,P,\alpha)=\left(\Pi(P^{\dagger}Q),\,Q\alpha Q^{-1}\right)=(W^{\dagger},\Theta).
\end{equation*}
The Hamiltonian equations are the same as \eqref{hamsys1}, and the integration scheme, in particular, for the $\Theta$ variable is
\begin{equation*}
\begin{aligned}
&\Theta_{n}=\left(I+\frac{h}{2}\tilde{M_{1}}\right)\tilde\Theta\left(I+\frac{h}{2}\tilde{M_{1}}\right)^{-1},\\
&\Theta_{n+1}=\left(I-\frac{h}{2}\tilde{M_{1}}\right)\tilde\Theta\left(I-\frac{h}{2}\tilde{M_{1}}\right)^{-1}.
\end{aligned}
\end{equation*}
One can see that in this case there is no way to get rid of inverse matrix operations, and therefore  discrete semi-direct product reduction theory provides us with a Lie--Poisson integrator different from \eqref{MHDmethod1}.
However, the question if there are mathematically reasonable and practically important examples of such Lie--Poisson flows remains open.

\section{Hazeltine's equations for magnetized plasma}\label{sec:hazeltine}
In this section, we consider another important example of a Lie--Poisson system on the dual of a semi-direct product Lie algebra, which is Hazeltine's equations, describing 2D turbulence in magnetized plasma \cite{Holm,Haz,ShukYuRah,HazHolm,HazMeiss}. 
This system is a generalization of the MHD equations \eqref{MHDvort} considered previously, namely
\begin{equation}
\label{Alfven}
\left\{
\begin{aligned}
&\dot\omega=\left\{\omega,\Delta^{-1}\omega\right\}+\left\{\theta,\Delta\theta\right\}, \\
&\dot\theta=\left\{\theta,\Delta^{-1}\omega\right\}-\alpha\left\{\theta,\chi\right\},\\
&\dot\chi=\left\{\chi,\Delta^{-1}\omega\right\}+\left\{\theta,\Delta\theta\right\},
\end{aligned}
\right.
\end{equation}
where $\omega$ and $\theta$ have the same meaning as before, $\chi$ is the normalized deviation of the particle density from a constant equilibrium value, and $\alpha$ is a constant parameter. 
If $\alpha=0$, the system \eqref{Alfven} decouples into the dynamics of the two fields $\omega$ and $\theta$ that constitutes the MHD dynamics, and the dynamics of the $\chi$ field.

The system \eqref{Alfven} is known to be a Lie--Poisson system, as first described in \cite{HazHolm}, for the Hamiltonian
\begin{equation*}
H=\frac{1}{2}\int\limits_{S^{2}}\left(\omega\Delta^{-1}\omega+\theta\Delta\theta-\alpha\chi^{2}\right)\mu,
\end{equation*}
and with the Casimirs
\begin{equation*}
\mathcal{C}=\int\limits_{S^{2}}\left(f(\theta)+\chi g(\theta)+k(\omega-\chi)\right)\mu
\end{equation*}
for arbitrary smooth functions $f,g,k$.

Using the geometric quantization approach as described above, we get a spatially discretized analogue of the system \eqref{Alfven} given by
\begin{equation}
\label{qAlfven}
\left\{
\begin{aligned}
&\dot W=[W,M_{1}]+[\Theta,M_{2}], \\
&\dot\Theta=[\Theta,M_{1}]-\alpha[\Theta,\chi],\\
&\dot\chi=[\chi,M_{1}]+[\Theta,M_{2}],
\end{aligned}
\right.
\end{equation}
where $W,\Theta,\chi\in\mathfrak{su}(N)$, and $M_{1}=\Delta_{N}^{-1}W$, $M_{2}=\Delta_{N}\Theta$.

By introducing a new variable $\Psi=W-\chi$, the system \eqref{qAlfven} becomes
\begin{equation}
\label{qAlfven1}
\left\{
\begin{aligned}
&\dot \Psi=[\Psi,M_{1}], \\
&\dot\Theta=[\Theta,M_{3}],\\
&\dot\chi=[\chi,M_{3}]+[\Theta,M_{2}],
\end{aligned}
\right.
\end{equation}
where $M_{3}=M_{1}-\alpha\chi$.

\begin{proposition}
The system \eqref{qAlfven1} is a Lie--Poisson flow on the dual $\mathfrak{f}^{*}$ of the Lie algebra $$\mathfrak{f}=\mathfrak{su}(N)\oplus\left(\mathfrak{su}(N)\ltimes\mathfrak{su}(N)^{*}\right).$$
\end{proposition}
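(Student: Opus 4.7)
The plan is to exhibit a Hamiltonian on $\mathfrak{f}^{*}$ whose Lie--Poisson equations $\dot J = \mathrm{ad}^{*}_{\delta H/\delta J}J$ coincide with \eqref{qAlfven1}. The change of variables $\Psi = W-\chi$ is precisely what exposes the direct-sum structure: the equation $\dot\Psi = [\Psi,M_{1}]$ is a bare isospectral flow on $\mathfrak{su}(N)$, while $(\chi,\Theta)$ obeys the same structural pattern as $(W,\Theta)$ in \eqref{qEu}, only with $M_{1}$ replaced by $M_{3}=M_{1}-\alpha\chi$. I therefore identify $\Psi$ with the first summand $\mathfrak{su}(N)^{*}$ and $(\Theta,\chi^{\dag})$ with the dual of the magnetic-extension summand, using the same slot convention as in the proposition preceding \eqref{qEu}.

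For the Hamiltonian I take the natural quantisation of the continuous energy from \cite{Holm},
\begin{equation*}
H(\Psi,\chi,\Theta) = \tfrac{1}{2}\operatorname{tr}\bigl((\Psi+\chi)^{\dag}\Delta_{N}^{-1}(\Psi+\chi)\bigr) + \tfrac{1}{2}\operatorname{tr}(\Theta^{\dag}\Delta_{N}\Theta) - \tfrac{\alpha}{2}\operatorname{tr}(\chi^{\dag}\chi),
\end{equation*}
whose Frobenius-gradients are $\delta H/\delta \Psi = M_{1}$, $\delta H/\delta \chi = M_{1}-\alpha\chi = M_{3}$ and $\delta H/\delta \Theta = M_{2}$. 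The Lie-algebra velocity is therefore $(M_{1},(M_{3},M_{2}^{\dag})) \in \mathfrak{f}$.

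Because $\mathfrak{f}$ is a direct sum, the coadjoint action splits as the corresponding direct sum: on the first summand I use the Frobenius-identified $\mathfrak{su}(N)$ formula $\mathrm{ad}^{*}_{v}w = [w,v]$, and on the second summand the magnetic-extension formula $\mathrm{ad}^{*}_{(v,a)}(w,b) = ([w,v],\,[v^{\dag},b] + [a,w^{\dag}])$ that was already derived in the previous section. Substituting the velocity $(M_{1},(M_{3},M_{2}^{\dag}))$ and the dual element $(\Psi,(\Theta,\chi^{\dag}))$, and simplifying the daggers via skew-Hermiticity, the three components of $\dot J = \mathrm{ad}^{*}_{\delta H/\delta J}J$ reproduce \eqref{qAlfven1} exactly.

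The only real work is bookkeeping: keeping track of the sign flip $\chi^{\dag}=-\chi$ and of which of $\chi^{\dag}$ and $\Theta$ sits in which slot of the magnetic-extension dual, so that the coadjoint formula yields $\dot\chi = [\chi,M_{3}]+[\Theta,M_{2}]$ with the correct signs. Once this is fixed the verification is mechanical, and as a byproduct the quantised Casimirs $\operatorname{tr}f(\Psi)$, $\operatorname{tr}f(\Theta)$ and $\operatorname{tr}(\chi g(\Theta))$ are immediately identified as the analogues of the continuous invariants $K(\omega-\chi)$, $F(\theta)$ and $\chi G(\theta)$.
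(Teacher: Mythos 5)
Your proposal is correct and follows exactly the route the paper intends: the paper states the proposition without proof, relying on the observation that in the variables $(\Psi,\Theta,\chi)$ the bracket splits into a bare isospectral flow plus a copy of the magnetic-extension structure from \eqref{qEu}, and your explicit Hamiltonian, its Frobenius gradients $(M_{1},M_{3},M_{2})$, and the componentwise coadjoint computation simply fill in that verification. (Your Hamiltonian differs from the paper's \eqref{hamAlfven} by an overall sign because you pair with $\mathrm{tr}(A^{\dag}B)$ rather than $\mathrm{tr}(AB)$; the two conventions give the same gradients, so this is immaterial.)
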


The quantized analogues of the Casimirs for \eqref{Alfven} are:
\begin{itemize}
\item the spectrum of $\Psi=W-\chi$, or equivalently
\begin{equation*}
\mathcal{E}_{k}=\mathrm{tr}(k(W-\chi))
\end{equation*}
for any smooth function $k$;
\item the spectrum of $\Theta$, or equivalently
\begin{equation*}
\mathcal{C}_{f}=\mathrm{tr}(f(\Theta))
\end{equation*}
for any smooth function $f$;
\item the cross-helicity
\begin{equation*}
J=\mathrm{tr(\chi g(\Theta))}
\end{equation*}
for any smooth function $g$.
\end{itemize}
We also have the Hamiltonian as a conserved quantity:
\begin{equation}
\label{hamAlfven}
H=\frac{1}{2}\mathrm{tr}\left(WM_{1}+\Theta M_{2}-\alpha\chi^{2}\right).
\end{equation}

As we see from \eqref{qAlfven1}, we can apply the isospectral (midpoint) integrator \cite{ModViv} to the first equation for $\Psi$, and the magnetic (midpoint) integrator \eqref{MHDmethod1} to the pair of equations for $\Theta$ and $\chi$. This results in the following scheme for the variables $W$, $\Theta$, and $\chi$:
\begin{equation}
\label{IsoAlfven}
\begin{aligned}
&\Theta_{n}=\tilde\Theta-\frac{h}{2}[\tilde\Theta,\tilde{M_{3}}]-\frac{h^{2}}{4}\tilde{M_{3}}\tilde\Theta\tilde{M_{3}}, \\
&\Theta_{n+1}=\Theta_{n}+h[\tilde\Theta,\tilde{M_{3}}],\\
&W_{n}=\tilde W-\frac{h}{2}[\tilde W,\tilde{M_{1}}]-\frac{h}{2}[\tilde\Theta,\tilde M_{2}]-\\&\frac{h^{2}}{4}\left(\tilde{M_{1}}\tilde W\tilde{M_{1}}+\tilde M_{2}\tilde\Theta\tilde M_{3}+\tilde M_{3}\tilde\Theta\tilde M_{2}-\alpha\tilde{M_{1}}\tilde{\chi}^{2}-\alpha\tilde{\chi}^{2}\tilde{M_{1}}+\alpha^{2}\tilde{\chi}^{3}\right), \\
&W_{n+1}=W_{n}+h[\tilde W,\tilde{M_{1}}]+h[\tilde\Theta,\tilde{M_{2}}],\\
&\chi_{n}=\tilde\chi-\frac{h}{2}[\tilde\chi,\tilde{M_{3}}]-\frac{h}{2}[\tilde\Theta,\tilde M_{2}]-\frac{h^{2}}{4}\left(\tilde{M_{3}}\tilde \chi\tilde{M_{3}}+\tilde M_{2}\tilde\Theta\tilde M_{3}+\tilde M_{3}\tilde\Theta\tilde M_{2}\right),\\
&\chi_{n+1}=\chi_{n}+h[\tilde\chi,\tilde{M_{3}}]+h[\tilde\Theta,\tilde{M_{2}}],
\end{aligned}
\end{equation}
where $\tilde{M_{1}}=\Delta_{N}^{-1}(\tilde W)$, $\tilde{M_{2}}=\Delta_{N}(\tilde\Theta)$, $\tilde{M_{3}}=\tilde{M_{1}}-\alpha\tilde\chi$.

\begin{proposition}
\label{thm1}
The numerical scheme \eqref{IsoAlfven} is a Lie--Poisson integrator for \eqref{qAlfven}. 
It preserves the Casimirs exactly,
\begin{equation*}
\begin{aligned}
&\mathrm{tr}(k(W_{n}-\chi_{n}))=\mathrm{tr}(k(W_{n+1}-\chi_{n+1})),\\
&\mathrm{tr}(f(\Theta_{n}))=\mathrm{tr}(f(\Theta_{n+1})),\\
&\mathrm{tr(\chi_{n} g(\Theta_{n}))}=\mathrm{tr(\chi_{n+1}g(\Theta_{n+1}))},
\end{aligned}
\end{equation*}
and nearly preserves the Hamiltonian \eqref{hamAlfven} in the sense of backward error analysis.
\end{proposition}

\section{Kirchhoff equations}\label{sec:kirchhoff_equations}
Another example of a Lie--Poisson system on the dual of a semi-direct product Lie algebra is the \textit{Kirchhoff equations}, describing the motion of a rigid body in an ideal fluid. 
This is a Lie--Poisson flow on the dual of $\mathfrak{f}=\mathfrak{so}(3)\ltimes\mathbb{R}^{3}\simeq\mathfrak{so}(3)\ltimes\mathfrak{so}(3)^{*}$, and is thus a magnetic extension of the rigid body dynamics:
\begin{equation}
\label{kir}
\left\{
\begin{aligned}
&\dot m=m\times\omega+p\times u,\\
&\dot p=p\times\omega,
\end{aligned}
\right.
\end{equation}
where $p,m,\omega,u\in\mathbb{R}^{3}$, and
\begin{equation*}
u_{i}=\frac{\partial H}{\partial p_{i}},\quad\omega_{i}=\frac{\partial H}{\partial m_{i}},\quad i=1,2,3,
\end{equation*}
for the Hamiltonian
\begin{equation*}
H(m,p)=\frac{1}{2}\left(\sum\limits_{k=1}^{3}a_{k}m_{k}^{2}+\sum\limits_{k,j=1}^{3}b_{kj}(p_{k}m_{j}+m_{k}p_{j})+\sum\limits_{k,j=1}^{3}c_{kj}p_{k}p_{j}\right),
\end{equation*}
where $a_{k},b_{kj},c_{kj}$ are real numbers.

Using the standard isomorphism between $\mathbb{R}^{3}$ and $\mathfrak{so}(3)$, we construct skew-symmetric matrices
\begin{equation*}
\begin{aligned}
&W=\begin{pmatrix}0 & -m_{3} & m_{2}\\
                  m_{3} & 0 & -m_{1}\\
                  -m_{2} & m_{1} & 0\end{pmatrix},\quad \Theta=\begin{pmatrix}0 & -p_{3} & p_{2}\\
                  p_{3} & 0 & -p_{1}\\
                  -p_{2} & p_{1} & 0\end{pmatrix},\\
&M_{1}=\begin{pmatrix}0 & -\omega_{3} & \omega_{2}\\
                  \omega_{3} & 0 & -\omega_{1}\\
                  -\omega_{2} & \omega_{1} & 0\end{pmatrix},\quad M_{2}=\begin{pmatrix}0 & -u_{3} & u_{2}\\
                  u_{3} & 0 & -u_{1}\\
                  -u_{2} & u_{1} & 0\end{pmatrix}.
\end{aligned}
\end{equation*}
Then, system \eqref{kir} takes the form of a Lie--Poisson flow on the dual of $\mathfrak{f}=\mathfrak{so}(3)\ltimes\mathfrak{so}(3)^{*}$:
\begin{equation*}
\dot W=[W,M_{1}]+[\Theta,M_{2}], \quad \dot\Theta=[\Theta,M_{1}].
\end{equation*}

The Casimirs \eqref{qCas} for the case of Kirchhoff equations are a generalization of the well-known Casimirs
\begin{equation*}
I_{1}=p_{1}^{2}+p_{2}^{2}+p_{3}^{2},\quad I_{2}=m_{1}p_{1}+m_{2}p_{2}+m_{3}p_{3},
\end{equation*}
which are obtained from \eqref{qCas} by taking $f(\Theta)=\Theta^{2}$, and $g(\Theta)=\Theta$.

The following classical cases are known to be integrable for the Kirchhoff equations (for all of them $b_{kj}=c_{kj}=0$ for $j\ne k$):
\begin{itemize}
\item Kirchhoff case \cite{Kir}
\begin{equation*}
a_{1}=a_{2},\quad b_{11}=b_{22},\quad c_{11}=c_{22}.
\end{equation*}
\item Clebsch case \cite{Cleb}
\begin{equation*}
b_{11}=b_{22}=b_{33},\quad \frac{c_{11}-c_{22}}{a_{3}}+\frac{c_{33}-c_{11}}{a_{2}}+\frac{c_{22}-c_{33}}{a_{1}}=0.
\end{equation*}
\item Lyapunov--Steklov--Kolosov case \cite{LSK}
\begin{equation*}
\begin{split}
&\frac{b_{11}-b_{22}}{a_{3}}+\frac{b_{33}-b_{11}}{a_{2}}+\frac{b_{22}-b_{33}}{a_{1}}=0,{}\\&c_{11}-\frac{(b_{22}-b_{33})^{2}}{a_{1}}=c_{22}-\frac{(b_{33}-b_{11})^{2}}{a_{2}}=c_{33}-\frac{(b_{11}-b_{22})^{2}}{a_{3}}.
\end{split}
\end{equation*}
\end{itemize}
In the forthcoming section, we will illustrate the method \eqref{MHDmethod1} by both verifying the preservation of Casimirs, Hamiltonian, and capturing integrable behaviour.
\section{Numerical simulations}
\label{sec:numerical_simulations}
In this section, we provide numerical tests of the schemes \eqref{MHDmethod1} and \eqref{IsoAlfven}, verifying the exact preservation of Casimirs and near preservation of the Hamiltonian.

\subsection{Kirchhoff equations}
We begin verifying the properties of the method on low-dimensional integrable cases of the Kirchhoff equations. 
For all the simulations, we used the time step size $h=0.1$, and the final time of simulation is $T=1000$. 
Initial conditions are randomly generated $\mathfrak{so}(3)$ matrices.\footnote{Numerical simulations for this section are implemented in a Python code available at \url{https://github.com/michaelroop96/kirchhoff.git}}

First, we consider the Kirchhoff integrable case. Fig.~\ref{cas_kir-kir} shows the exact preservation of Casimir functions for the Kirchhoff integrable case, and Fig.~\ref{ham_kir_kir} shows nearly preservation of the Hamiltonian function. 
The phase portrait is shown in Fig.~\ref{phase_kir-kir}. 
One can clearly observe a quasi-periodic dynamics with a regular pattern typical for integrable systems.

\begin{figure}[h!]
\begin{minipage}[h]{0.5\linewidth}
\center{\includegraphics[scale=0.4]{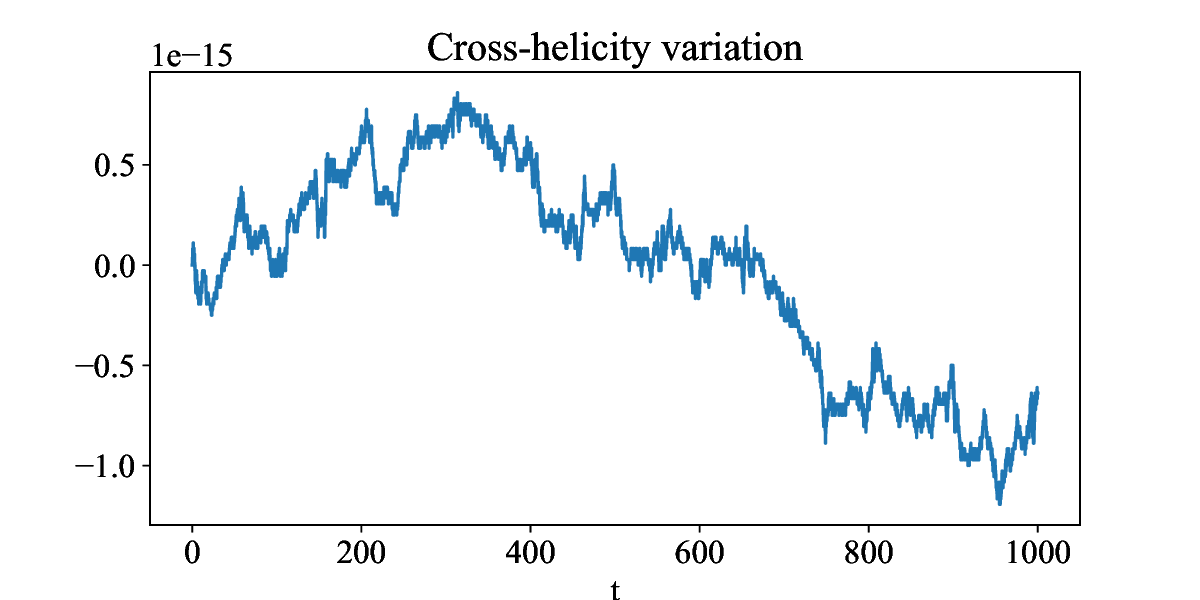}}
\end{minipage}%
\hfill
\begin{minipage}[h]{0.5\linewidth}
\center{\includegraphics[scale=0.4]{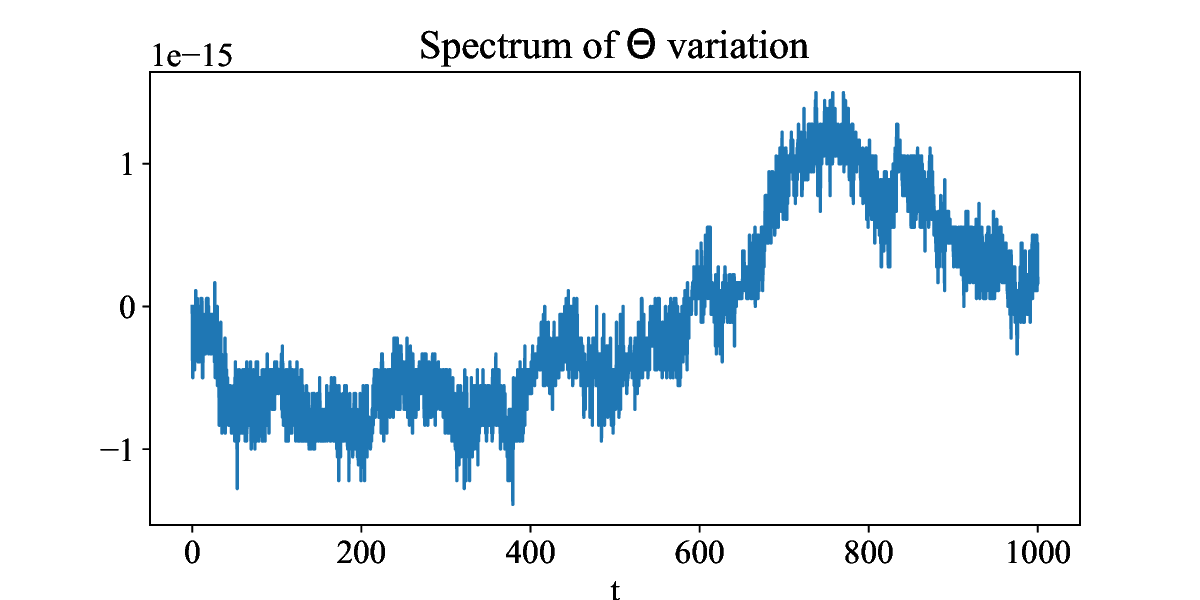}}
\end{minipage}
\hfill
\caption{Cross-helicity (left) and spectrum (right) variation for the Kirchhoff system, Kirchhoff integrable case. The order $10^{-15}$ of the magnitude of the variation indicates the exact preservation of the Casimirs.}
\label{cas_kir-kir}
\end{figure}
\begin{figure}[ht!]
\centering
\includegraphics[scale=0.45]{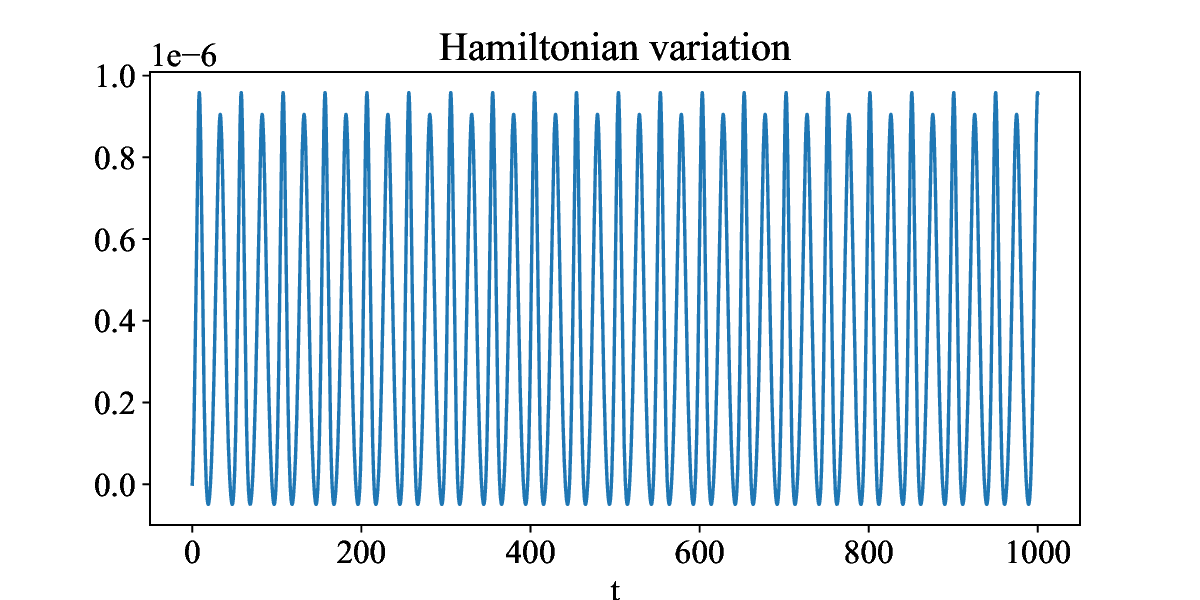}
\caption{Hamiltonian variation for the Kirchhoff system, Kirchhoff integrable case.}
\label{ham_kir_kir}
\end{figure}
\begin{figure}[h]
\begin{minipage}[h]{0.5\linewidth}
\center{\includegraphics[scale=0.35]{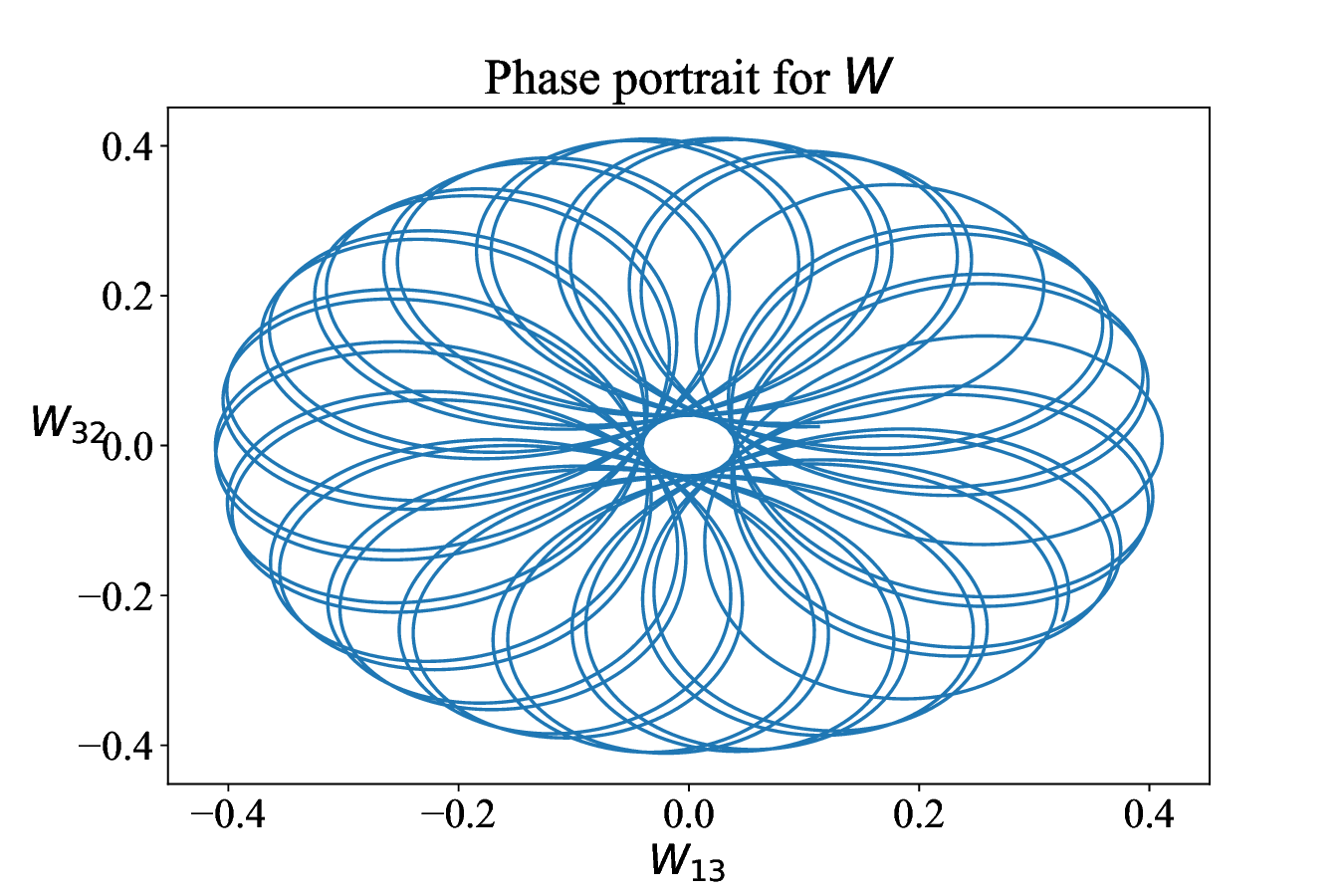}}
\end{minipage}%
\hfill
\begin{minipage}[h]{0.5\linewidth}
\center{\includegraphics[scale=0.35]{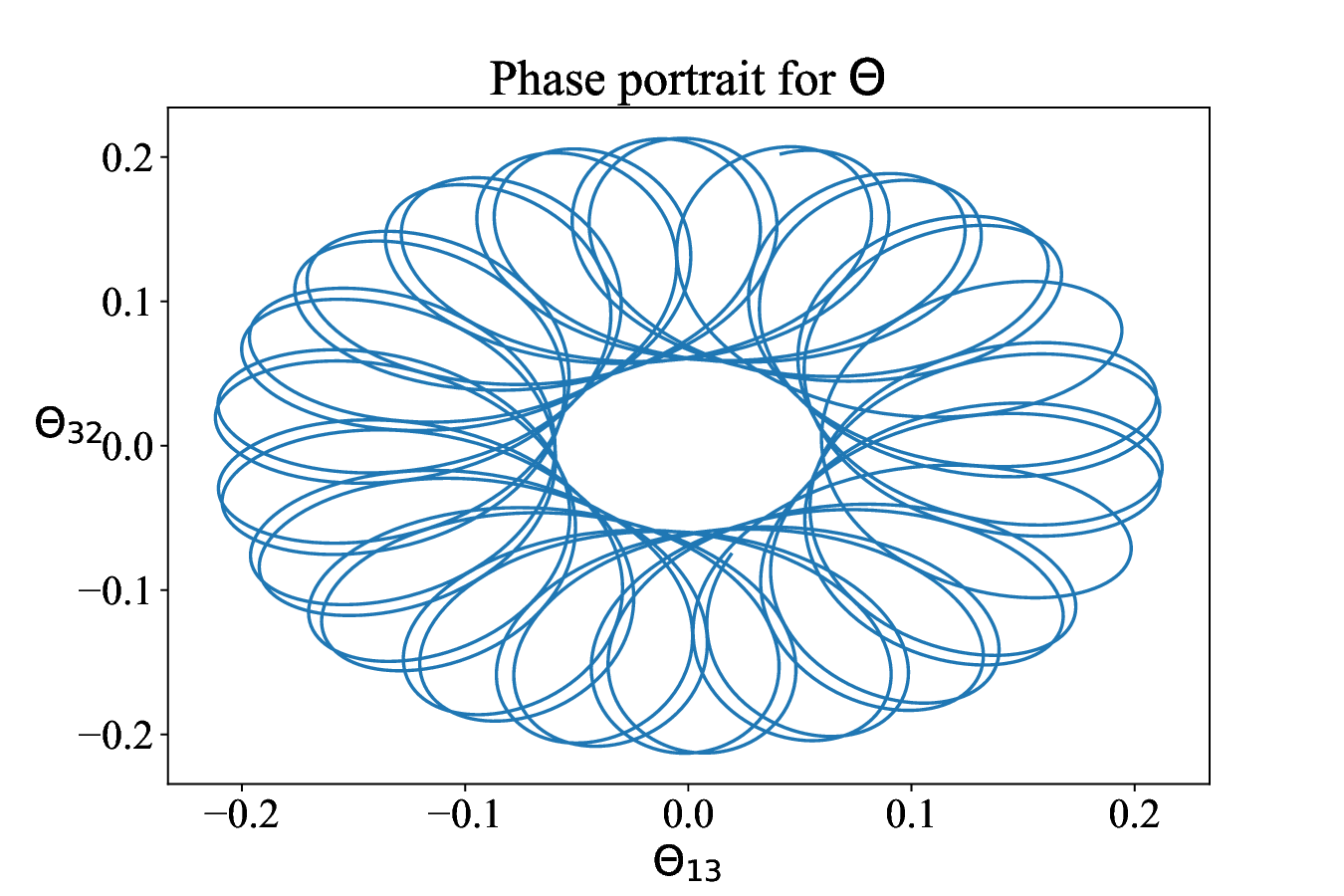}}
\end{minipage}
\hfill
\caption{Phase portrait for $W$ (left) and $\Theta$ (right) for the Kirchhoff system, Kirchhoff integrable case. Component $W_{32}=m_{1}$ is shown against $W_{13}=m_{2}$, and component $\Theta_{32}=p_{1}$ is shown against $\Theta_{13}=p_{2}$.}
\label{phase_kir-kir}
\end{figure}

Second, we consider the Clebsch integrable case, for which we also observe exact preservation of Casimirs in Fig.~\ref{cas_kir-cleb}, nearly preservation of Hamiltonian in Fig.~\ref{ham_kir_cleb}, and a phase portrait in Fig.~\ref{phase_kir_cleb}.

\begin{figure}[h]
\begin{minipage}[h]{0.5\linewidth}
\center{\includegraphics[scale=0.4]{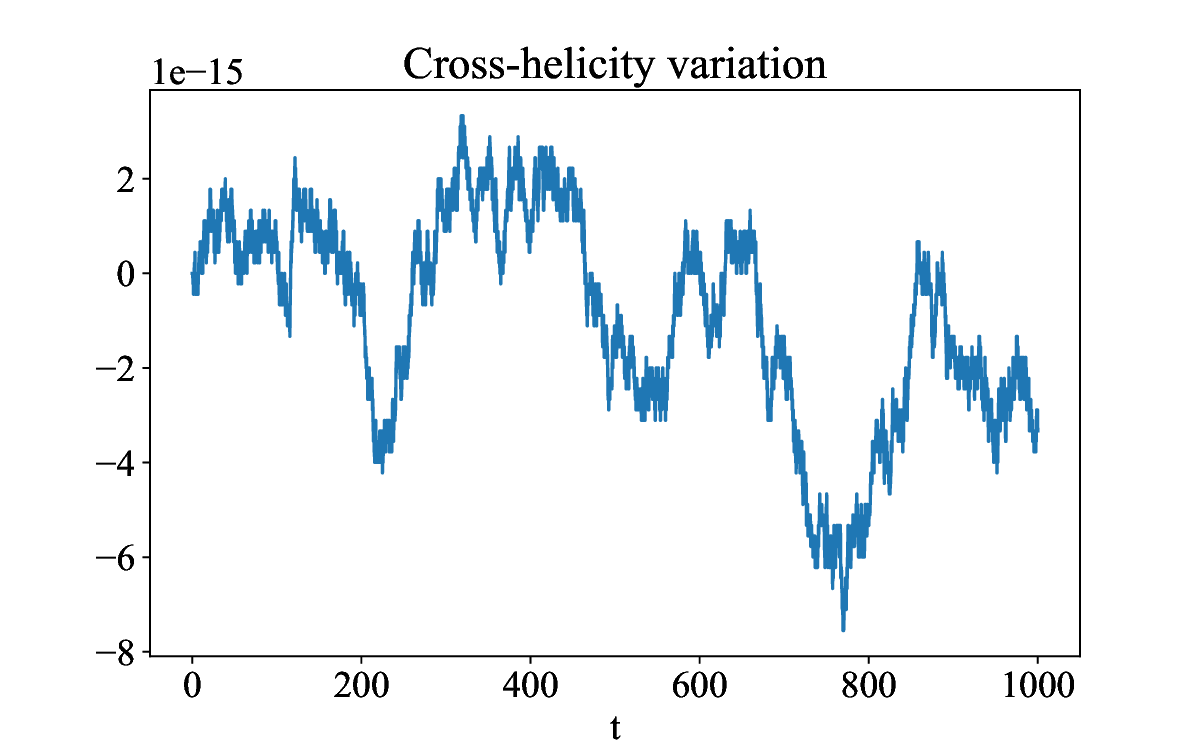}}
\end{minipage}%
\hfill
\begin{minipage}[h]{0.5\linewidth}
\center{\includegraphics[scale=0.4]{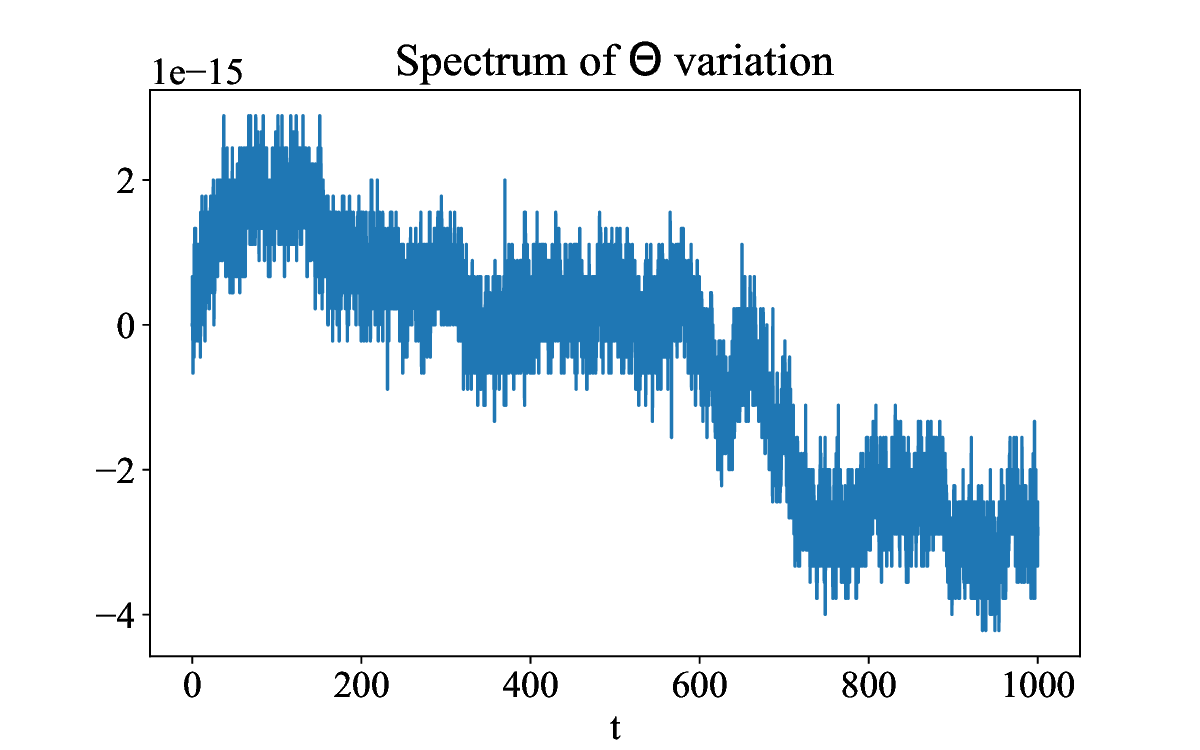}}
\end{minipage}
\hfill
\caption{Cross-helicity (left) and spectrum (right) variation for the Kirchhoff system, Clebsch integrable case. The order $10^{-15}$ of the magnitude of the variation indicates the exact preservation of the Casimirs.}
\label{cas_kir-cleb}
\end{figure}
\begin{figure}[ht!]
\centering
\includegraphics[scale=0.45]{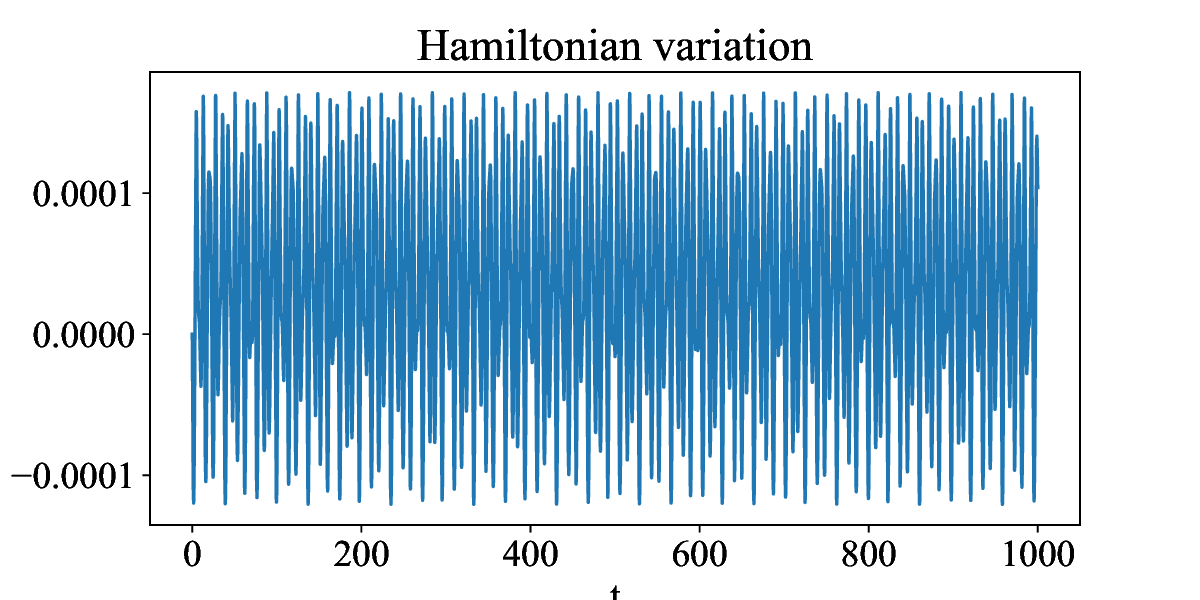}
\caption{Hamiltonian variation for the Kirchhoff system, Clebsch integrable case.}
\label{ham_kir_cleb}
\end{figure}
\begin{figure}[h]
\begin{minipage}[h]{0.5\linewidth}
\center{\includegraphics[scale=0.35]{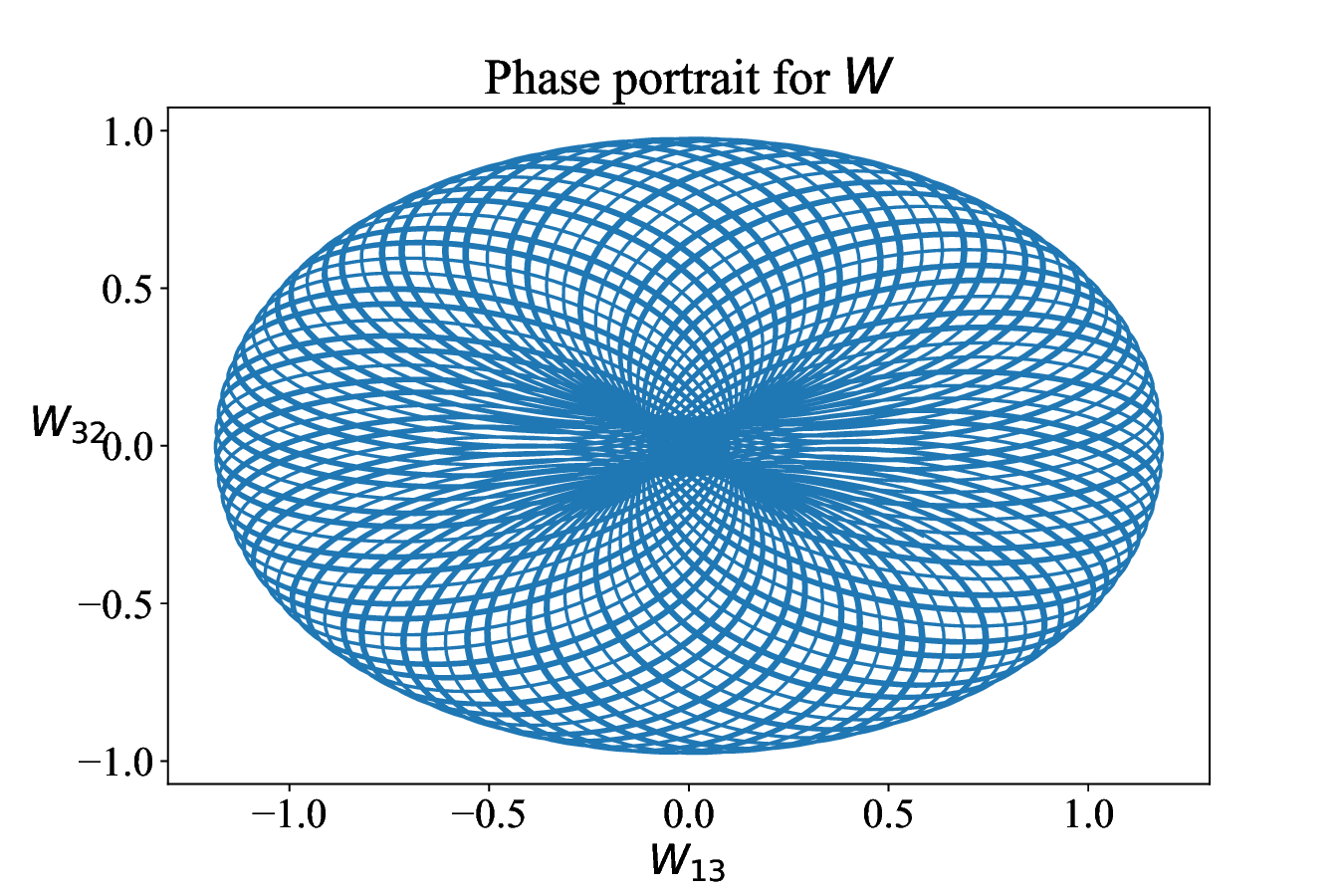}}
\end{minipage}%
\hfill
\begin{minipage}[h]{0.5\linewidth}
\center{\includegraphics[scale=0.35]{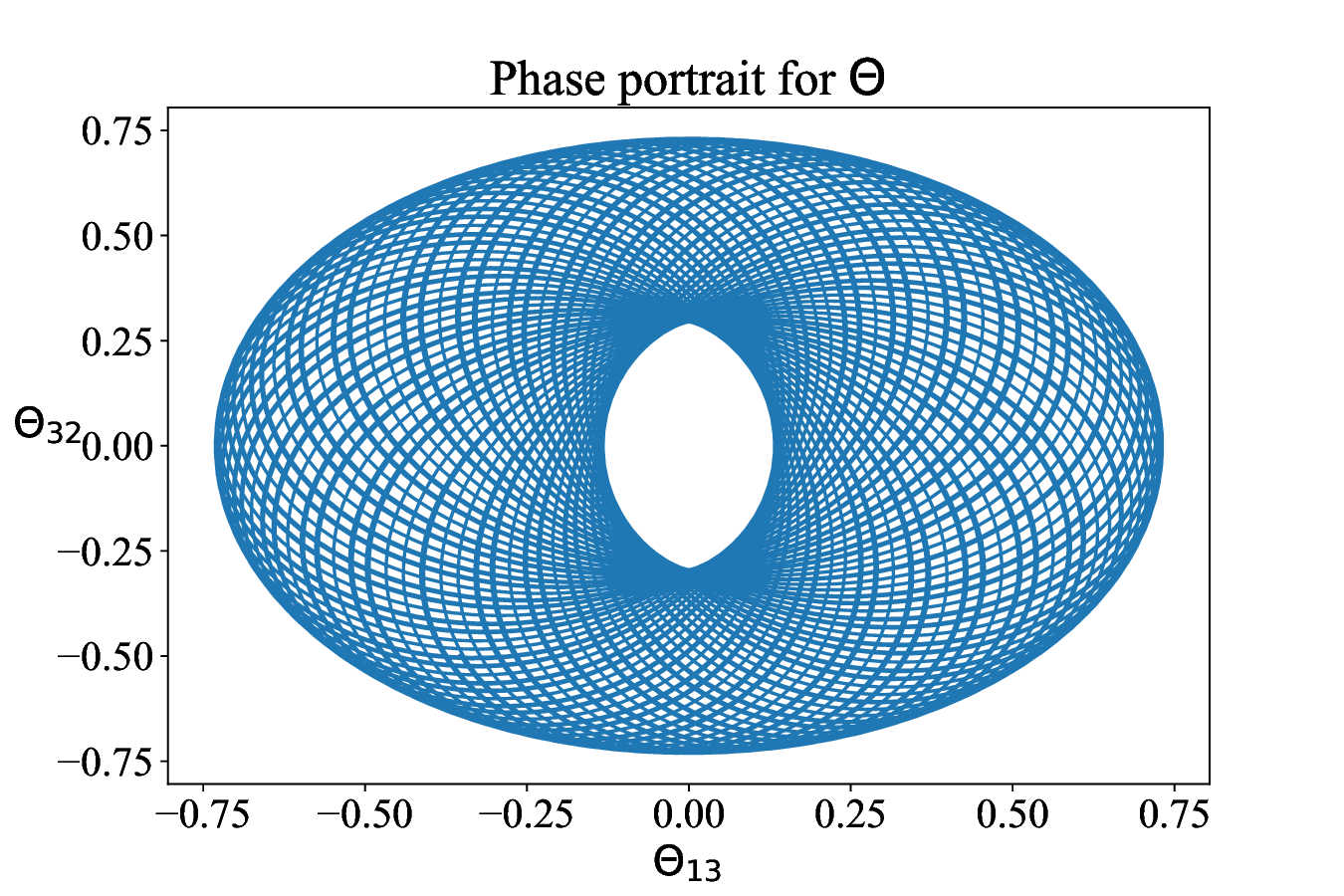}}
\end{minipage}
\hfill
\caption{Phase portrait for $W$ (left) and $\Theta$ (right) for the Kirchhoff system, Clebsch integrable case. Component $W_{32}=m_{1}$ is shown against $W_{13}=m_{2}$, and component $\Theta_{32}=p_{1}$ is shown against $\Theta_{13}=p_{2}$.}
\label{phase_kir_cleb}
\end{figure}

Finally, we have the Lyapunov-Steklov-Kolosov integrable case, with exact preservation of Casimirs shown in Fig.~\ref{cas_kir-LSK}, nearly preservation of Hamiltonian shown in Fig.~\ref{ham_kir_LSK}, and a phase portrait in Fig.~\ref{phase_kir_LSK}.

\begin{figure}[h]
\begin{minipage}[h]{0.5\linewidth}
\center{\includegraphics[scale=0.4]{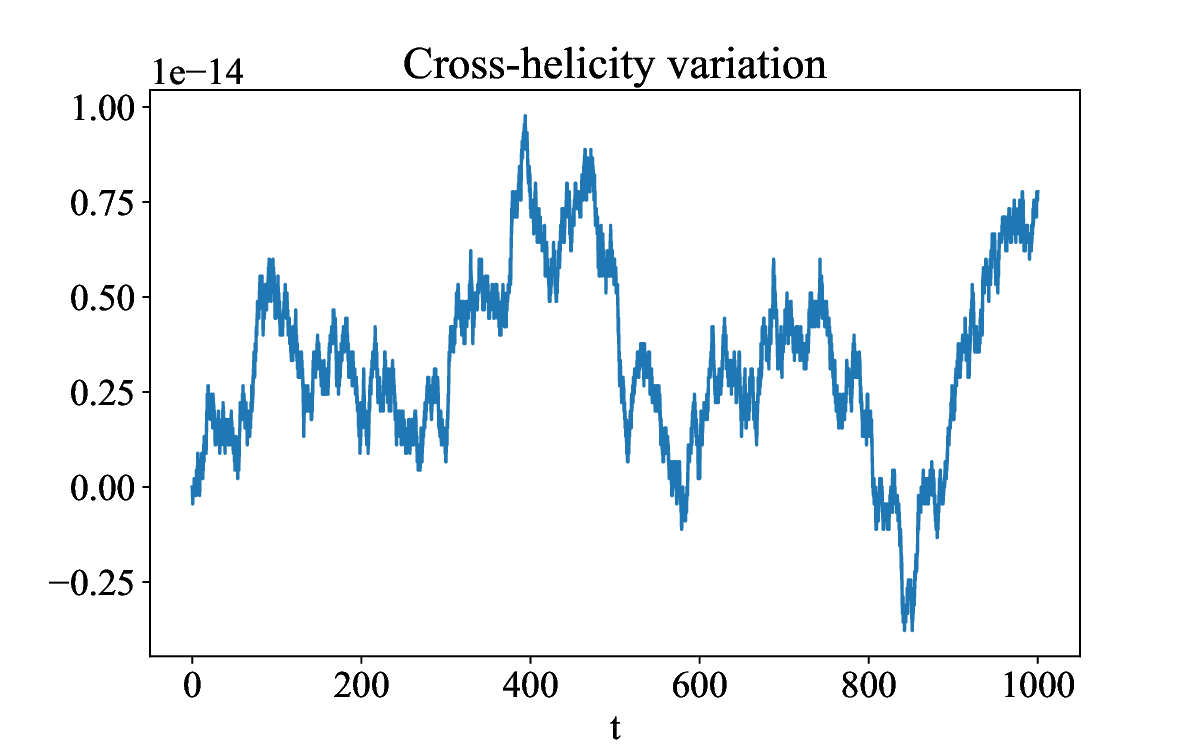}}
\end{minipage}%
\hfill
\begin{minipage}[h]{0.5\linewidth}
\center{\includegraphics[scale=0.4]{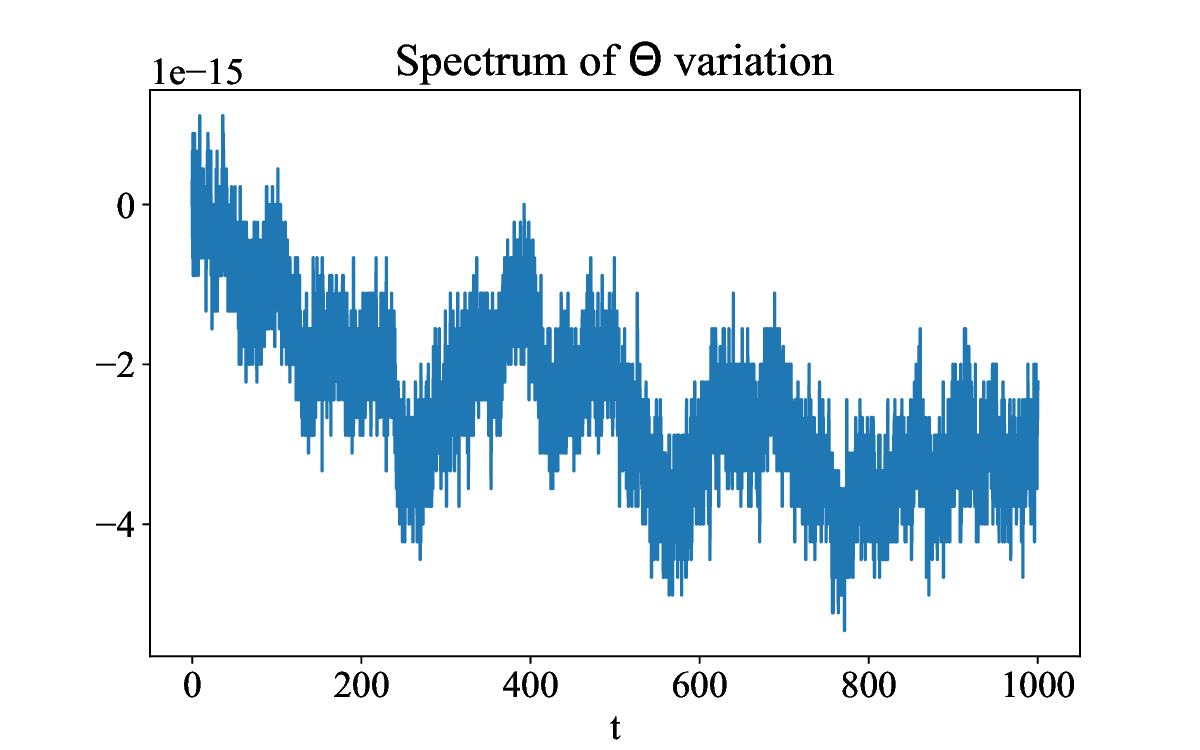}}
\end{minipage}
\hfill
\caption{Cross-helicity (left) and spectrum (right) variation for the Kirchhoff system, Lyapunov-Steklov-Kolosov integrable case. The order $10^{-15}$ of the magnitude of the variation indicates the exact preservation of the Casimirs.}
\label{cas_kir-LSK}
\end{figure}
\begin{figure}[ht!]
\centering
\includegraphics[scale=0.45]{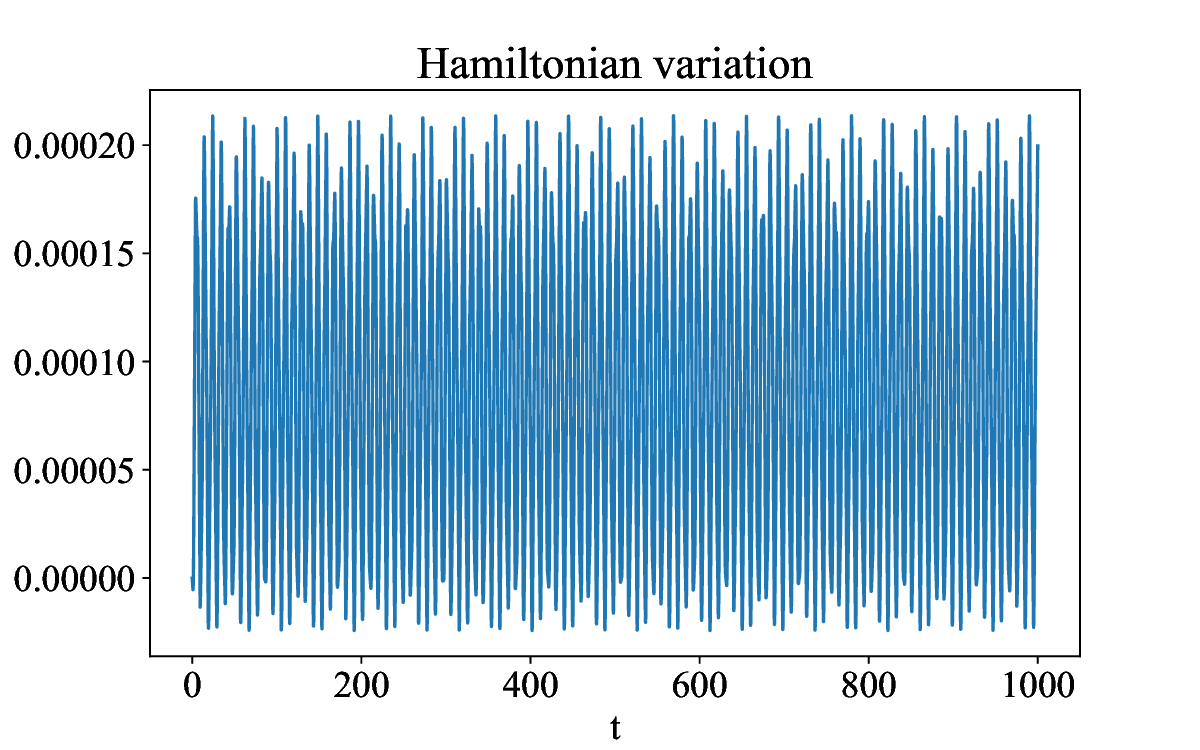}
\caption{Hamiltonian variation for the Kirchhoff system, Lyapunov-Steklov-Kolosov integrable case.}
\label{ham_kir_LSK}
\end{figure}
\begin{figure}[h]
\begin{minipage}[h]{0.5\linewidth}
\center{\includegraphics[scale=0.35]{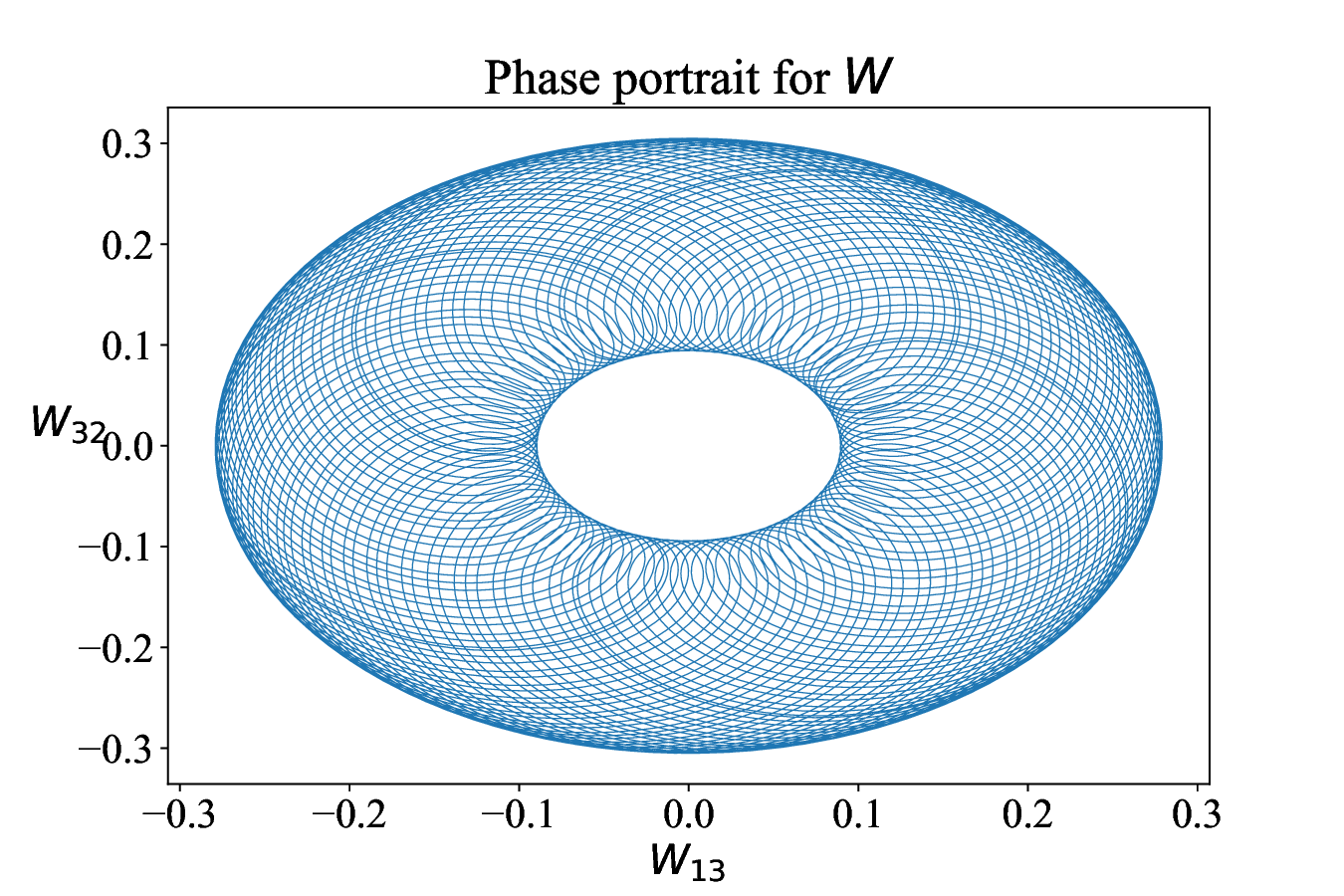}}
\end{minipage}%
\hfill
\begin{minipage}[h]{0.5\linewidth}
\center{\includegraphics[scale=0.35]{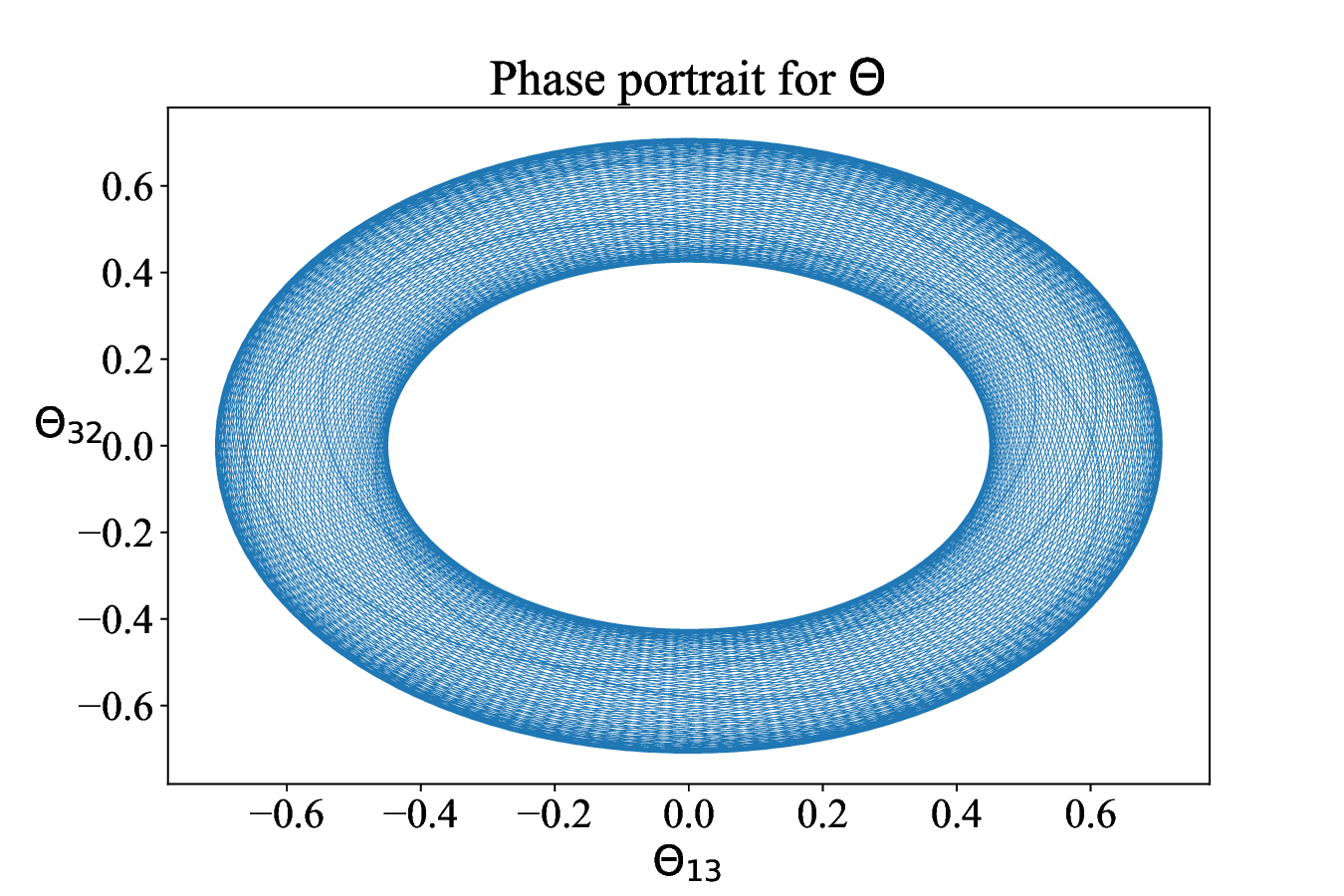}}
\end{minipage}
\hfill
\caption{Phase portrait for $W$ (left) and $\Theta$ (right) for the Kirchhoff system, Lyapunov-Steklov-Kolosov integrable case. Component $W_{32}=m_{1}$ is shown against $W_{13}=m_{2}$, and component $\Theta_{32}=p_{1}$ is shown against $\Theta_{13}=p_{2}$.}
\label{phase_kir_LSK}
\end{figure}
\subsection{Incompressible 2-D MHD equations}

Here, we demonstrate on low-dimensional matrices with $N=5$ that the integrator \eqref{MHDmethod1} preserves the underlying geometry, namely Casimirs and Hamiltonian\footnote{Numerical simulations for this section are implemented in a Python code available at \url{https://github.com/michaelroop96/qflowMHD.git}}. The final time of the simulation is $T=7500$.

Variations of spectrum of $\Theta$ and $\mathrm{tr}(W\Theta)$ are presented in Fig.~\ref{spec-figure}.
\begin{figure}[h]
\begin{minipage}[h]{0.5\linewidth}
\center{\includegraphics[scale=0.4]{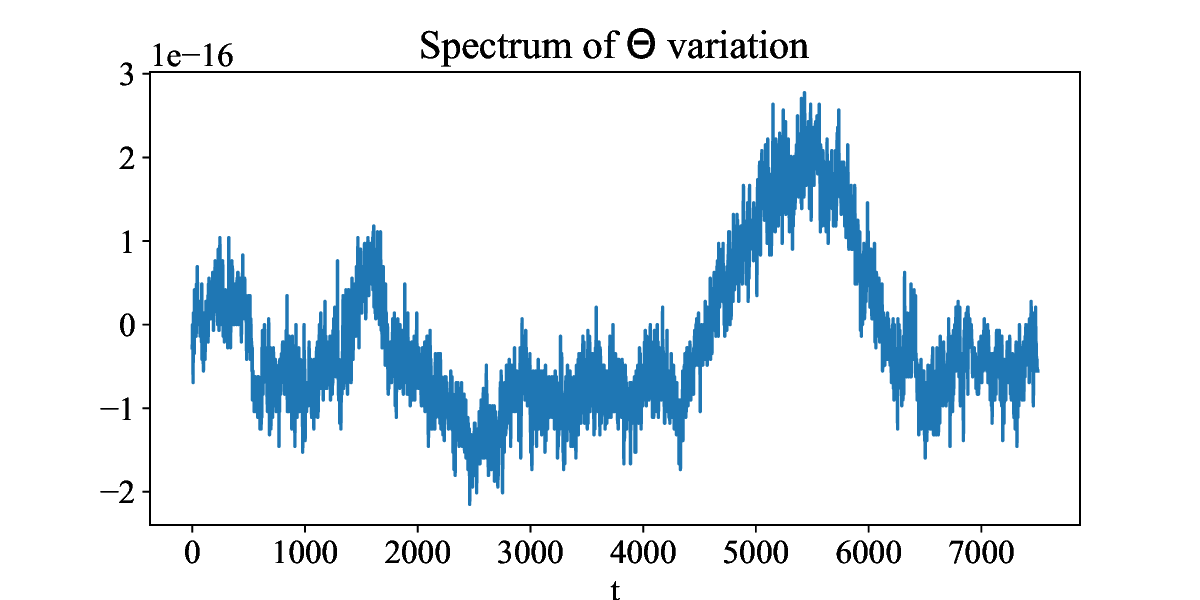}}
\end{minipage}%
\hfill
\begin{minipage}[h]{0.5\linewidth}
\center{\includegraphics[scale=0.4]{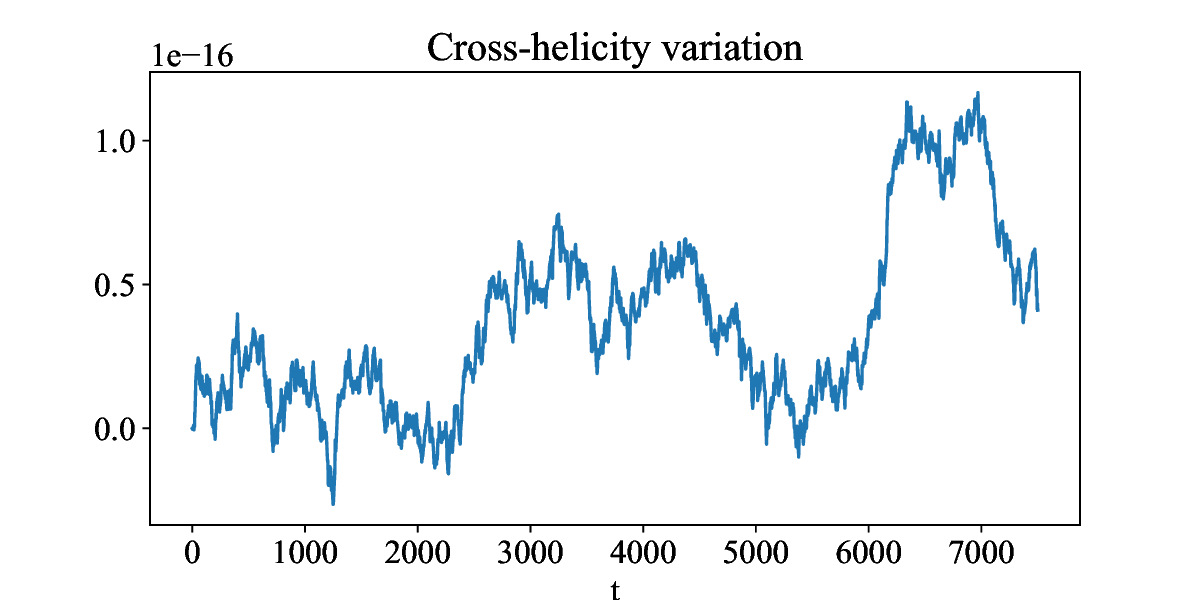}}
\end{minipage}
\hfill
\caption{Variation of the smallest eigenvalue of $\Theta$ and cross-helicity $\mathrm{tr}(W\Theta)$ for incompressible MHD equations. The order $10^{-16}$ of the magnitude of the variation indicates the exact preservation of the Casimirs.}
\label{spec-figure}
\end{figure}

One can see from Fig.~\ref{spec-figure} that variation of Casimirs has the magnitude $10^{-16}$, which is the tolerance of the fixed point iteration that is used to find $(\tilde W,\tilde\Theta)$. This indicates that the Casimirs are exactly preserved.

\begin{figure}[ht!]
\centering
\includegraphics[scale=0.45]{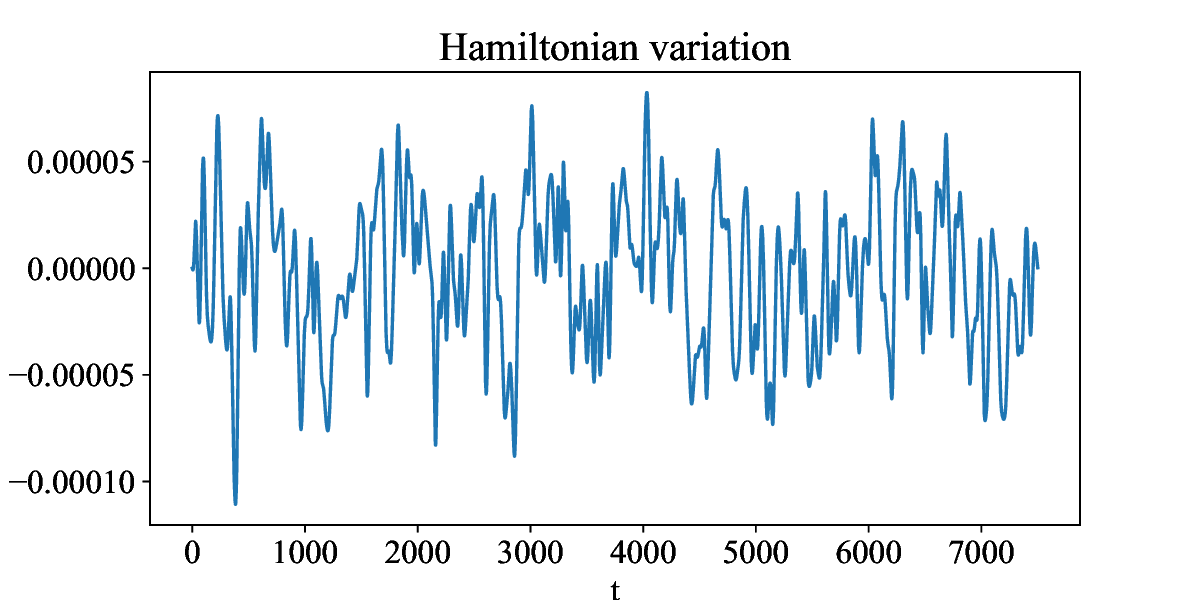}
\caption{Variation of the Hamiltonian for incompressible MHD equations. Absence of drift indicates nearly preservation of the Hamiltonian.}
\label{hamilt-figure}
\end{figure}

In Fig.~\ref{hamilt-figure}, one can see nearly preservation of the Hamiltonian function. The magnitude of the variation is related to the error constant of the method.
\subsection{Hazeltine's equations}
Here\footnote{Numerical simulations for this section are implemented in a Python code available at \url{https://github.com/michaelroop96/qflowAlfven.git}}, we demonstrate the properties stated in Theorem~\ref{thm1} on low-dimensional matrices with $N=5$, $\alpha=2$. 
The final time of simulation is $T=7500$.
The variations in the spectrum of $\Theta$ and $W-\chi$ are presented in Fig.~\ref{spec-figure-alf}.
Variations of cross-helicity $\mathrm{tr}(\chi\Theta)$ and Hamiltonian are presented in Fig.~\ref{crosshel-figure-alf}.
\begin{figure}[h]
\begin{minipage}[h]{0.5\linewidth}
\center{\includegraphics[scale=0.4]{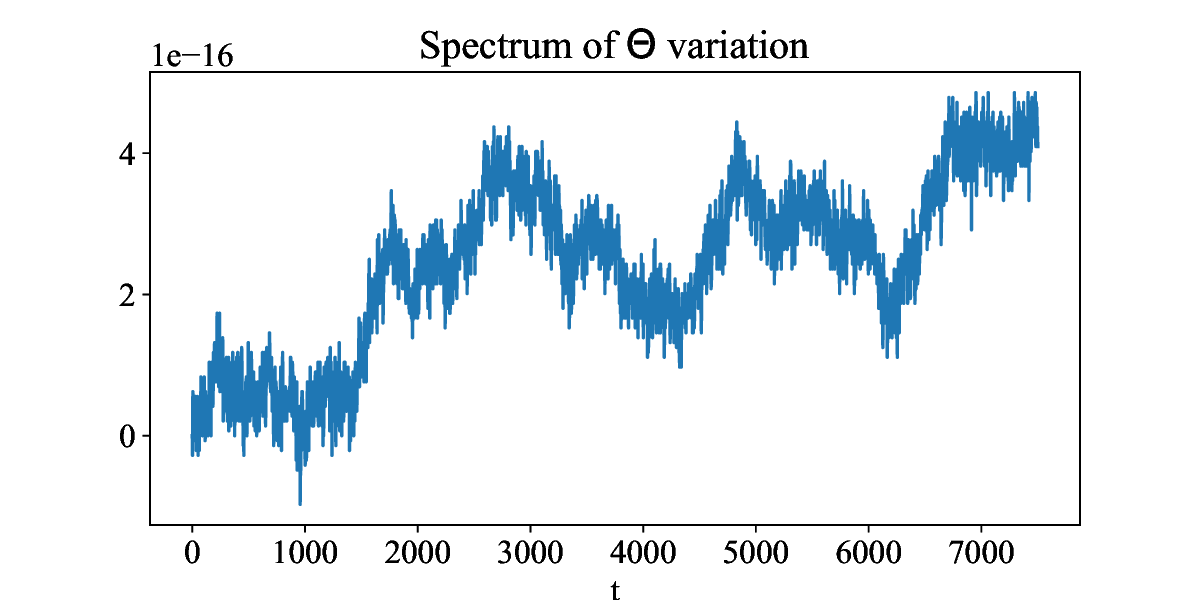}}
\end{minipage}%
\hfill
\begin{minipage}[h]{0.5\linewidth}
\center{\includegraphics[scale=0.4]{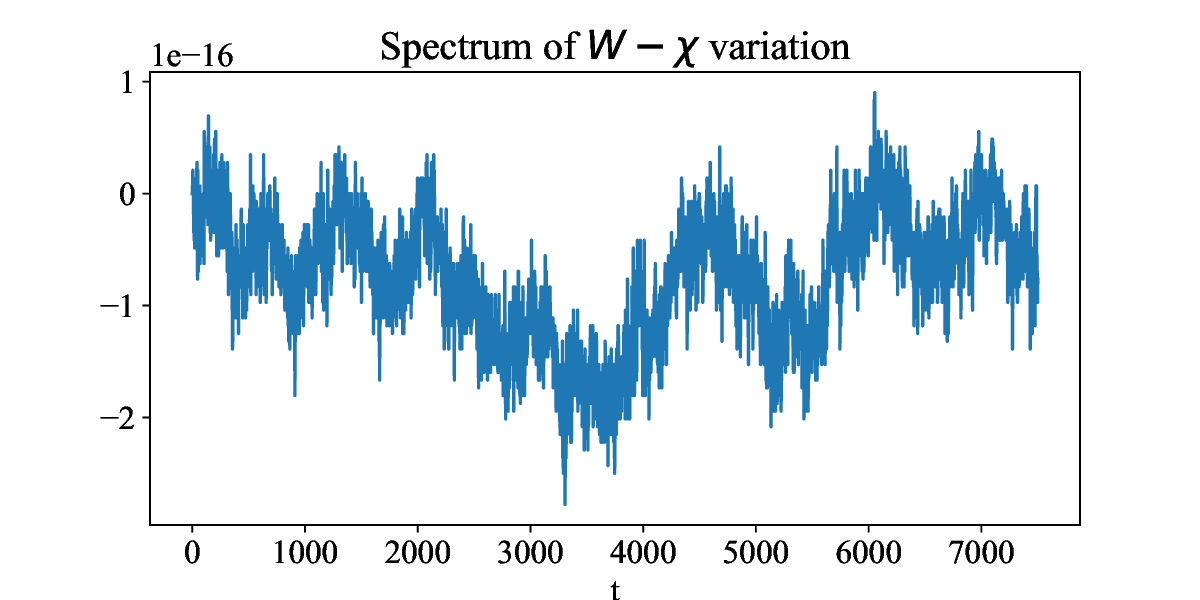}}
\end{minipage}
\hfill
\caption{Variation of the smallest eigenvalue of $\Theta$ and $W-\chi$ for Hazeltine's equations. The order of the magnitude variation indicates the exact preservation of the spectrum.}
\label{spec-figure-alf}
\end{figure}

\begin{figure}[h]
\begin{minipage}[h]{0.5\linewidth}
\center{\includegraphics[scale=0.4]{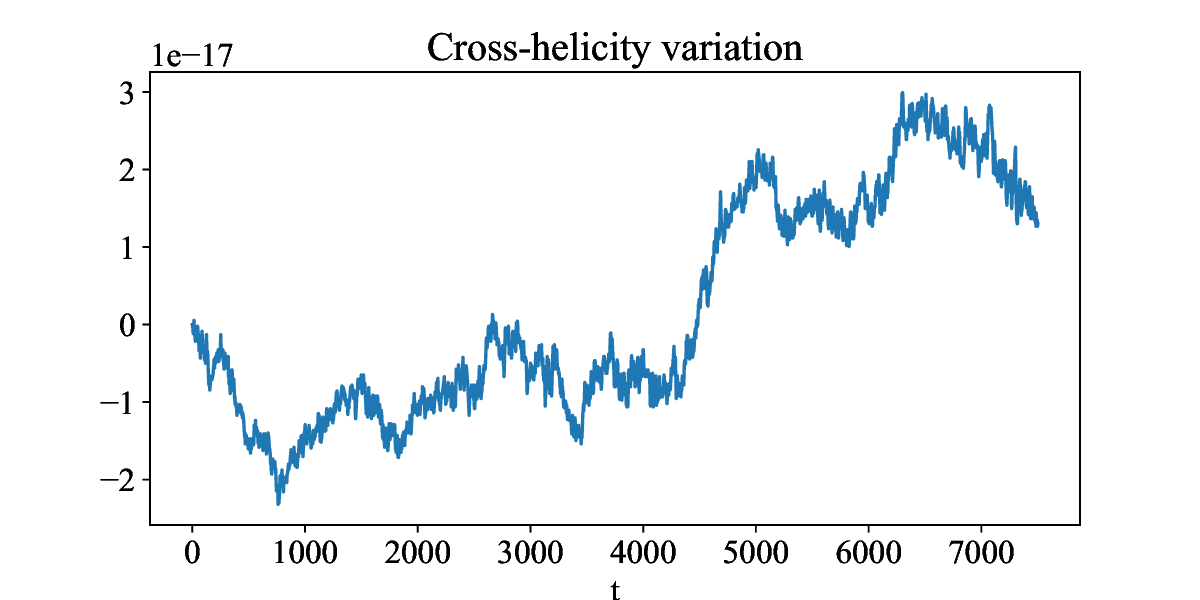}}
\end{minipage}%
\hfill
\begin{minipage}[h]{0.5\linewidth}
\center{\includegraphics[scale=0.4]{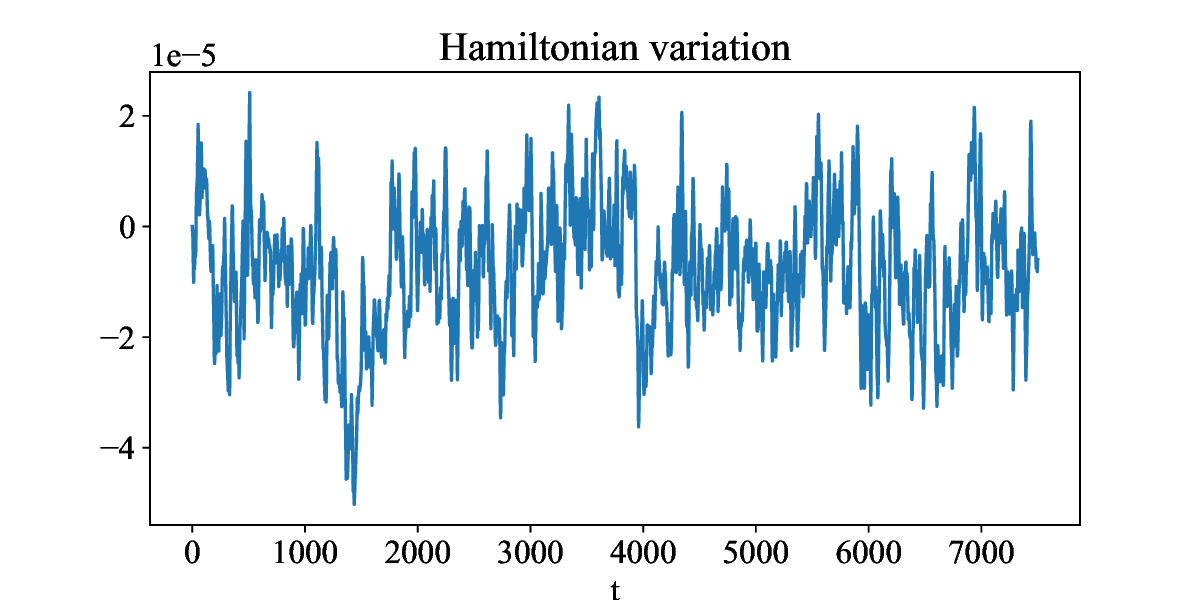}}
\end{minipage}
\hfill
\caption{Variation of the cross-helicity $\mathrm{tr}(\chi\Theta)$ and the Hamiltonian for Hazeltine's equations. The order of the magnitude variation indicates the exact preservation of the cross-helicity.}
\label{crosshel-figure-alf}
\end{figure}

\bibliographystyle{spmpsci}
\bibliography{main.bib}
\end{document}